\def\rset{\mathbb{R}}
\def\nset{\mathbb{N}}
\def\mcf{\mathcal{F}}
\def\argmin{\mathop{\rm arg\, min}}
\newtheorem{theorem}{Theorem}
\newtheorem{corollary}{Corollary}
\newtheorem{definition}{Definition}
\newtheorem{lemma}{Lemma}
\newtheorem{proposition}{Proposition}
\newtheorem{assumption}{Assumption}
\DeclareMathOperator{\prob}{\mathbb{P}}
\DeclareMathOperator{\expect}{\mathbb{E}}
\newcommand{\1}{\mathds{1}}
\title{Asymptotic Analysis of \\ Conditioned Stochastic Gradient Descent}
\author{\name Rémi Leluc \email remi.leluc@gmail.com \\
      \addr CMAP, \'Ecole Polytechnique \\ Institut Polytechnique de Paris, Palaiseau (France)
      \AND
      \name François Portier \email francois.portier@gmail.com \\
      \addr CREST, ENSAI \\
      \'Ecole Nationale de la Statistique et de l'Analyse de l'Information, Rennes (France)
      }
\begin{document}

\maketitle

\begin{abstract}
In this paper, we investigate a general class of stochastic gradient descent (SGD) algorithms, called \textit{conditioned} SGD, based on a preconditioning of the gradient direction. Using a discrete-time approach with martingale tools, we establish under mild assumptions the weak convergence of the rescaled sequence of iterates for a broad class of conditioning matrices including stochastic first-order and second-order methods. Almost sure convergence results, which may be of independent interest, are also presented. Interestingly, the asymptotic normality result consists in a stochastic equicontinuity property so when the conditioning matrix is an estimate of the inverse Hessian, the algorithm is asymptotically optimal.
\end{abstract}

\section{Introduction}
Consider some unconstrained optimization problem of the following form: 
\begin{align*}
\vspace{-0.2cm}
\min_{\theta \in \rset^d} \{ F(\theta) = \expect_{\xi}  [ f(\theta, \xi )]  \},
\vspace{-0.2cm}
\end{align*}
where $f$ is a loss function and $\xi$ is a random variable. This key methodological problem, known under the name of \textit{stochastic programming} \citep{shapiro2014lectures}, includes many flagship machine learning applications such as \textit{empirical risk minimization} \citep{bottou2018optimization}, \textit{adaptive importance sampling} \citep{delyon+p:2018} and \textit{reinforcement learning} \citep{sutton2018reinforcement}. When $F$ is differentiable, a common appproach is to rely on first-order methods. However, in many scenarios and particularly in large-scale learning, the gradient of $F$ may be hard to evaluate or even intractable. Instead, a random unbiased estimate of the gradient is available at a cheap computing cost and the state-of-the-art algorithm, \textit{stochastic gradient descent} (SGD), just moves along this estimate at each iteration. It is an iterative algorithm, simple and computationally fast, but its convergence towards the optimum is generally slow. \\
\textit{Conditioned} SGD, which consists in multiplying the gradient estimate by some conditioning matrix at each iteration, can lead to better performance as shown in several recent studies ranging from natural gradient \citep{amari1998natural,kakade2002natural} and stochastic second-order methods with quasi-Newton \citep{byrd2016stochastic} and (L)-BFGS methods \citep{liu1989limited} to diagonal scaling methods such as AdaGrad \citep{duchi2011adaptive}, RMSProp \citep{tieleman2012lecture}, Adam \citep{kingma2014adam}, AMSGrad \citep{reddi2018convergence} and adaptive coordinate sampling \citep{wangni2018gradient,JMLR:v23:21-1240}. These conditioning techniques are based on different strategies: diagonal scaling rely on feature normalization, stochastic second-order methods are concerned with minimal variance and adaptive coordinate sampling techniques aim at taking advantage of particular data structure. Furthermore, these methods proved to be the current state-of-the-art for training machine learning models \citep{zhang2004solving,lecun2012efficient} and are implemented in widely used programming tools \citep{pedregosa2011scikit,abadi2016tensorflow}.
%

\textit{Conditioned SGD} generalizes \textit{standard SGD} by adding a conditioning step to refine the descent direction. Starting from $\theta_0 \in \rset^d$, the algorithm of interest is defined by the following iteration
\begin{align*}
\theta_{k+1} = \theta_{k} - \gamma_{k+1} C_{k} g(\theta_{k}, \xi_{k+1}),\qquad k\geq 0,
\end{align*}
where $g(\theta_{k}, \xi_{k+1})$ is some unbiased gradient valued in $\mathbb R^d$,  $C_{k} \in \mathbb R^{d\times d}$ is called \textit{conditioning matrix} and $(\gamma_k)_{k\geq 1}$ is a decreasing learning rate sequence. An important question, which is still open to the best of our knowledge, is to characterize the asymptotic variance of such algorithms for non-convex objective $F$ and general estimation procedure for the conditioning matrix $C_k$.


\noindent \textbf{Related work.}
Seminal works around standard SGD ($C_k =I_d$) were initiated by \cite{robbins1951stochastic} and \cite{kiefer1952stochastic}. Since then, a large literature known as \textit{stochastic approximation}, has developed. The almost sure convergence is studied in \cite{robbins1971convergence} and \cite{bertsekas2000gradient}; rates of convergence are investigated in \cite{kushner:1979} and \cite{pelletier1998almost}; non-asymptotic bounds are given in \cite{moulines2011non}. The asymptotic normality can be obtained using two different approaches: a diffusion-based method is employed in \cite{pelletier1998weak} and \cite{benaim1999dynamics} whereas martingale tools are used in \cite{sacks1958asymptotic} and \cite{kushner1978stochastic}. We refer to \cite{nevelson1976stochastic,delyon1996general,benveniste:2012,duflo} for general textbooks on  \textit{stochastic approximation}. 

The aforementioned results do not apply directly to \textit{conditioned} SGD because of the presence of the matrix sequence $(C_k)_{k\geq 0}$ involving an additional source of randomness in the algorithm. Seminal papers dealing with the weak convergence of \textit{conditioned} SGD are \cite{venter:1967} and \cite{fabian1968asymptotic}. Within a restrictive framework (univariate case $d = 1$ and strong assumptions on the function $F$), their results are encouraging because the limiting variance of the procedure is shown to be smaller than the limiting variance of standard SGD.  Venter's and Fabian's results have then been extended to more general situations \citep{fabian1973asymptotically,nevelson1976stochastic,wei1987multivariate}. In \cite{wei1987multivariate}, the framework is still restrictive not only because the random errors are assumed to be independent and identically distributed but also because the objective $F$ must satisfy their assumption (4.10) which hardly extends to objectives other than quadratic. 

More recently, \cite{bercu2020efficient} have obtained the asymptotic normality as well as the efficiency of certain \textit{conditioned} SGD estimates in the particular case of \textit{logistic regression}. 
The previous approach has been generalized not long ago in \cite{boyer2020asymptotic} where the use of the Woodbury matrix identity is promoted to compute the Hessian inverse in the online setting. Several theoretical results, including the weak convergence of \textit{conditioned} SGD, are obtained for convex objective functions. An alternative to \textit{conditioning}, called \textit{averaging}, developed by \cite{polyak1990new} and \cite{polyak1992acceleration}, allows to recover the same asymptotic variance as \textit{conditioned SGD}. When dealing with convex objectives, the theory behind this averaging technique is a well-studied topic \citep{moulines2011non,gadat2017optimal,dieuleveut2020bridging,zhu2021online}. However, it is inevitably associated with a large bias caused by poor initialization and requires some parameter tuning through the \textit{burn-in} phase.

\noindent  \textbf{Contributions.}
The main result of this paper deals with the weak convergence of the rescaled sequence of iterates. Interestingly, our asymptotic normality result consists of the following continuity property: whenever the matrix sequence $(C_k)_{k \geq 0}$ converges to a matrix $C$ and the iterates $(\theta_k)_{k \geq 0} $ converges to a minimizer $\theta^\star$, the algorithm behaves in the same way as an oracle version in which $C$ would be used instead of $C_k$. We stress that contrary to \cite{boyer2020asymptotic}, no convexity assumption is needed on the objective function and no rate of convergence is required on the sequence $(C_k)_{k\geq 0}$. This is important because, in most studies, deriving a convergence rate on $(C_k)_{k\geq 0}$ requires a specific convergence rate on the iterates $(\theta_k)_{k\geq 0}$ which, in general, is unknown at this stage of the analysis. From a more  practical point of view, our main result claims that the impact of the approximation error resulting from the conditioning matrices estimation assumes a secondary role. This finding promotes the use of simple and cheap sequential algorithm to estimate the conditioning matrix which encompasses a broad spectrum of \textit{conditioned} SGD methods, highlighting the applicability and generalizability of the obtained result. Another result of independent interest dealing with the almost sure convergence of the gradients is also provided. 

In addition, for illustration purposes, we apply our results to the popular variational inference problem where one seeks to approximate a target density out of a parametric family by solving an optimization problem. In this framework, by optimizing the forward Kullback-Liebler divergence \citep{jerfel2021variational} and building stochastic gradients relying on importance sampling schemes \citep{delyon+p:2018}, we show that the approach has some efficiency properties. For the sake of completeness, we present in appendix practical ways to compute the \textit{conditioning} matrix $C_k$ and show that the resulting procedure satisfies the high-level conditions of our main Theorem.  This yields a feasible algorithm achieving minimum variance.

\noindent
To obtain these results, instead of approximating the rescaled sequence of iterates by a continuous diffusion (as for instance in \cite{pelletier1998weak}), we rely on a discrete-time approach where the recursion scheme is directly analyzed (as for instance in \cite{delyon1996general}). More precisely, the sequence of iterates is studied with the help of an auxiliary linear algorithm whose limiting distribution can be deduced from the central limit theorem for martingale increments \citep{hall:2014}. The limiting variance is derived from a discrete time matrix-valued dynamical system algorithm. It corresponds to the solution of a Lyapunov equation involving the matrix $C$. It allows a special choice for $C$ which guarantees an optimal variance. Finally, a particular recursion is identified to examine the remaining part. By studying it on a particular event, this part is shown to be negligible. 

\smallskip
\noindent \textbf{Outline.}
Section \ref{sec:csgd_math_background} introduces the framework of standard SGD with asymptotic results. Section \ref{sec:sto_grad} is dedicated to \textit{conditioned} SGD: it first presents popular optimization methods that fall in the considered framework and then presents our main results, namely the weak convergence and asymptotic optimality. Section \ref{sec:AIS_example} gathers practical implications of the main results for machine learning models in the framework of variational inference and Section \ref{sec:conclusion} concludes the paper with a discussion of avenues for further research. Technical proofs, additional propositions and numerical experiments are available in the appendix. 

\section{Mathematical background}\label{sec:csgd_math_background} 

In this section, the mathematical background of stochastic gradient descent (SGD) methods is presented and illustrated with the help of some examples. Then, to motivate the use of \textit{conditioning} matrices, we  present a known result from \cite{pelletier1998weak} about the weak convergence of SGD.

\subsection{Problem setup}

Consider the problem of finding a minimizer $\theta^\star \in \rset^d$ of a function $F: \mathbb{R}^{d} \rightarrow \mathbb{R}$, that is,
\begin{align*}
\theta^\star \in  \argmin_{\theta\in \mathbb R^d} F(\theta).
\end{align*}
In many scenarios and particularly in large scale learning, the gradient of $F$ cannot be fully computed and only a stochastic unbiased version of it is available. The SGD algorithm moves the iterate along this direction. To increase the efficiency, the random generators used to derive the unbiased gradients might evolve during the algorithm, \textit{e.g.}, using the past iterations. To analyse such algorithms, we consider the following probabilistic setting.

\begin{definition}
A stochastic algorithm is a sequence $(\theta_k)_{k\geq 0}$ of random variables defined on a probability space $(\Omega, \mathcal{F}, \mathbb{P})$ and valued in $\mathbb R^d$. Define $(\mathcal F_k)_{k\geq 0}$ as the natural $\sigma$-field associated to the stochastic algorithm $(\theta_k)_{k\geq 0}$,  i.e., $  \mathcal F_k = \sigma( \theta_0, \theta_1,\ldots, \theta_k)$, $k\geq 0$. A policy is a sequence of random probability measures $(P_k)_{k\geq 0} $, each defined on a measurable space $( S , \mathcal{S} ) $ that are adapted to $\mathcal F_k$.
\end{definition}

Given a \textit{policy} $(P_k)_{k\geq 0}$ and a \textit{learning rates} sequence $(\gamma_k)_{k\geq 1}$ of positive  numbers,
the SGD algorithm \citep{robbins1951stochastic} is defined by the update rule
\begin{align} \label{eq:classical}
&\theta_{k+1} = \theta_{k} - \gamma_{k+1} g(\theta_{k} , \xi_{k+1}) \quad \text{ with }\quad  \xi_{k+1} \sim  P _{k} ,
\end{align}
where $g: \mathbb{R}^{d} \times S \rightarrow \mathbb{R}^d$ is called the  gradient generator. The choice of the \textit{policy} $(P_k)_{k\geq 0}$ in SGD is important as it can impact the convergence speed, generalization performance, and efficiency of the optimization algorithm. While most classical approaches rely on uniform sampling and mini-batch sampling, it may be more efficient to use advanced selection sampling strategy such as stratified sampling or importance sampling (see Example 1 for details). The policy $(P_k)_{k\geq 0} $ is used at each iteration to produce random gradients through the function $g$.  Those gradients are assumed to be unbiased.

\begin{assumption}[Unbiased gradient]\label{ass:unbiased}
The gradient generator $g: \mathbb{R}^{d} \times S \rightarrow \mathbb{R}^d$ is such that for all $\theta \in \mathbb R^d$, $g(\theta, \cdot)$ is measurable, and we have: $\forall k\geq 0, \quad \expect\left[ g(\theta_{k} , \xi_{k+1})  |\mcf_{k}\right]   =   \nabla F(\theta_{k}).$
 \end{assumption}

\noindent
We emphasize three important examples covered by the developed approach. In each case, explicit ways to generate the stochastic gradient are provided.

\bigskip

\noindent \label{ex:erm}\textbf{Example 1.} \textit{(Empirical Risk Minimization)} Given some observed data $z_1,\ldots,z_n \in \rset^p$ and a differentiable loss function $\ell: \rset^d \times \rset^p \to \rset$, the objective function $F$ approximates the true expected risk $\expect_z[\ell(\theta,z)]$ using its empirical counterpart $F(\theta) = n^{-1} \sum_{i=1} ^n \ell(  \theta , z_i ).$ Classically, the gradient estimates at $\theta_{k}$ are given by the policy $$
g(\theta_{k} , \xi_{k+1})    = \nabla_\theta \ell(  \theta_{k} , \xi_{k+1} ) \quad \text{ with }\quad  \xi_{k+1}  \sim  \sum_{i=1} ^n  \delta_{z_i} / n .$$
Another one, more subtle, referred to as mini-batching \citep{gower2019sgd}, consists in generating uniformly a set of $n_k$ samples $(z_1,\ldots, z_{n_k} ) $ and computing the gradient as the average $n_k^{-1} \sum_{j=1}^{n_k} \nabla_\theta \ell(\theta_{k}, z_j)$. Note that interestingly, we allow changes of the minibatch size throughout the algorithm. Our framework also includes adaptive non-uniform sampling \citep{papa2015adaptive} and survey sampling \citep{clemenccon2019optimal}, which use $P_k = \sum_{i=1} ^n  w_{i}^{(k)} \delta_{z_i}  $ with $\mathcal F_{k}$-adapted weights satisfying $\sum_{i=1} ^n w_{i}^{(k)} = 1$ for each $k\geq 0$.

\noindent \textbf{Example 2.} \textit{(Adaptive importance sampling for variational inference)} Given a target density function $f$, which for instance might result from the posterior distribution of some observed data, and a parametric family of samplers $\{q_\theta\, :\,  \theta \in \Theta \}$, the aim is to find a good approximation of $f$ out of the family of samplers. A standard choice  \citep{jerfel2021variational}  for the objective function is the so called \textit{forward Kullback-Leibler} divergence given by $F(\theta) = - \int \log(q_\theta (y) / f(y) ) f(y) dy .$  Then in the spirit of adaptive importance sampling schemes \citep{delyon+p:2018}, gradient estimates are given by
\begin{align*}
&g(\theta_{k} , \xi_{k+1})   = -  \nabla_{\theta} \log( q_{\theta_{k}} (\xi_{k+1} ) )  \frac{f(\xi_{k+1} )}{q_{\theta_{k}} (\xi_{k+1} ) }, \quad \xi_{k+1}  \sim q_{\theta_{k}} .
\end{align*} 
Other losses such as $\alpha$-divergence \citep{dau+d+p:2021} or generalized method of moment \citep{delyon+p:2018} may also be considered depending on the problem of interest. Some applications of \textit{conditioned} SGD algorithm to this particular framework are considered with more details in Section \ref{sec:AIS_example}.

\smallskip
\noindent \textbf{Example 3.} \textit{(Policy-gradient methods)} 
In reinforcement learning \citep{sutton2018reinforcement}, the goal of the agent is to find the best action-selection policy to maximize the expected reward.
Policy-gradient methods \citep{baxter2001infinite,williams1992simple} use a parameterized policy $\{\pi_\theta\, :\,  \theta \in \Theta \}$ to optimize an expected reward function $F$ given by $F(\theta)     =  {\expect}_{\xi \sim \pi_{\theta}}  [\mathcal{R}(\xi)]$
where $\xi  $ is a trajectory including nature states and selected actions. Using the policy gradient theorem, one has $ \nabla F(\theta) = {\expect}_{\xi \sim \pi_{\theta}} \left[\mathcal{R}(\xi) \nabla_\theta \log \pi_{\theta}(\xi) \right]$, leading to the REINFORCE algorithm \citep{williams1992simple} given by
\begin{align*}
&g(\theta_{k} , \xi_{k+1})   = \mathcal{R}(\xi_{k+1}) \nabla_\theta \log \pi_{\theta_{k}}(\xi_{k+1}), \quad \xi_{k+1}  \sim \pi_{\theta_{k}} .
\end{align*}

\subsection{Weak convergence of SGD}





This section is related to the weak convergence property of the normalized sequence of iterates $ (\theta_k - \theta^\star) / \sqrt{\gamma_k}$. The working assumptions include the almost sure convergence of the sequence of iterates $(\theta_k)_{k \geq 0}$ towards a stationary point $\theta^\star$. Note that, given Assumptions \ref{ass:unbiased} and \ref{ass:robbins_monro2}, there exist many criteria on the objective function that give such almost sure convergence. For these results, we refer to \cite{bertsekas2000gradient,benveniste:2012,duflo}. In addition to this high-level assumption of almost sure convergence, we require the following classical assumptions. Let $\mathcal{S}_d^{++}(\rset)$ denote the space of real symmetric positive definite matrices and define for all $k\geq 0$,
\begin{align*}
w_ {k+1}  = \nabla F(\theta_{k}) - g(\theta_{k},\xi_{k+1}), \quad \Gamma_k =  \expect\left[w_{k+1}^{}w_{k+1}^\top|\mcf_{k}\right].
\end{align*}

The learning rates sequence $(\gamma_k)_{k\geq 1}$ should decay to eventually anneal the noise but not too fast so that the iterates $(\theta_k)_{k\geq 0}$ can reach interesting places in a finite time.

\begin{assumption}[Learning rates] \label{ass:robbins_monro2} 
The sequence of step-size is $\gamma_k = \alpha k^{-\beta}$ with $\beta \in (1/2,1]$.
\end{assumption}
This classical form of the step-size ensures theoretical convergence guarantee through the Robbins-Monro condition: $\sum_k \gamma_k = \infty, \sum_k \gamma_k^2 < \infty$. However, note that in practice, the choice of learning rate is often determined through experimentation and fine-tuning to achieve the best performance on the given task.

\begin{assumption}[Hessian] \label{ass:hessian} 
The Hessian matrix at stationary point is positive definite, i.e., $H = \nabla^2 F(\theta^\star)\in  \mathcal{S}_d^{++}(\rset)$ and the mapping $\theta \mapsto \nabla^2 F(\theta)$ is continuous at $\theta^\star$.
\end{assumption}

The positive definiteness of the Hessian matrix provides stability and robustness guarantees in the optimization process. It ensures that small perturbations or noise in the objective function or the training data do not significantly affect the convergence behavior. The positive curvature helps in confining the optimization trajectory near the minimum and prevents it from getting trapped in flat regions or saddle points.

The noise sequence $(w_k)_{k \geq 1}$ defines a sequence of conditional covariance matrices $(\Gamma_k)_{k \geq 1}$ that is assumed to converge so that one can identify the limiting covariance $\Gamma = \expect[g(\theta^\star,\xi)g(\theta^\star,\xi)^\top]$.

\begin{assumption}[Covariance matrix] \label{ass:cov} 
There exists $\Gamma \in \mathcal{S}_d^{++}(\rset)$ such that $\Gamma_k \stackrel{k \rightarrow +\infty} \longrightarrow \Gamma$ a.s.
\end{assumption}
Finally, in order to derive a central limit theorem for the iterates of the algorithm, there is an extra need for stability which is synonymous with a uniform bound on the noise around the minimizer.
\begin{assumption}[Lyapunov bound] \label{ass:lyapunov_bound} There exist $\delta,\varepsilon >0$ such that: $$\sup_{k\geq 0} \expect[ \|w_{k+1}\|_2 ^{2+\delta}  |\mcf_{k} ] \1 _{\{\|\theta_k - \theta^\star\|\leq \varepsilon\}}   <\infty \quad a.s.$$
\end{assumption}

\noindent
Note that all these assumptions are stated in the spirit of \citet{pelletier1998weak} making them mild and general. In particular, Assumptions \ref{ass:cov} and \ref{ass:lyapunov_bound} are similar to (A1.2) in \citet{pelletier1998weak}. More precisely, Assumption \ref{ass:cov} is needed to identify the limiting distribution while  Assumption \ref{ass:lyapunov_bound} is a stability condition often referred to as the Lyapunov condition. This last condition is technical but not that strong as it is similar to the Lindeberg's condition which is necessary  \citep{hall:2014} for tightness. The following result can be either derived from \cite[Theorem 1]{pelletier1998weak} or as a direct corollary of our main result, Theorem \ref{th:convergence_weak}, given in Section \ref{sec:second_order}.
\begin{theorem}[Weak convergence of SGD] \label{TH:CLT_sgd} 
Let $(\theta_k)_{k \geq 0}$ be obtained by the SGD rule \eqref{eq:classical}.  Suppose that Assumptions \ref{ass:unbiased}, \ref{ass:robbins_monro2}, \ref{ass:hessian}, \ref{ass:cov}, \ref{ass:lyapunov_bound} are fulfilled and that $\theta_k \to \theta^\star$ almost surely. 
If moreover, $(H-\zeta I)$ is positive definite 
with $\zeta = \1_{\{\beta=1\}}/2\alpha$, it holds that $$ \frac{1}{\sqrt{\gamma_k}}(\theta_k - \theta^\star)  {\leadsto} \mathcal{N}(0,\Sigma), \qquad \text{as } k\to \infty$$
where $\Sigma$ satisfies the Lyapunov equation: $(H-\zeta I_d) \Sigma + \Sigma (H-\zeta I_d)^\top = \Gamma$.
\end{theorem}

Several remarks are to be explored. Since $\Gamma$ and $(H-\zeta I)$ are positive definite matrices, there exists a unique solution $\Sigma$ to the Lyapunov equation $(H-\zeta I_d) \Sigma + \Sigma (H-\zeta I_d)^\top = \Gamma$ given by $\Sigma = \int_{0}^{+\infty} \exp[-t(H-\zeta I_d)] \Gamma \exp[-t(H-\zeta I_d)^\top]\mathrm{d}t.$ Second, the previous result can be expressed as $k^{\beta/2} (\theta_k - \theta^\star) \leadsto  \mathcal{N}(0, \alpha \Sigma)$. Hence, the fastest rate of convergence is obtained when $\beta=1$ for which we recover the classical $1/\sqrt{k}$-rate of a Monte Carlo estimate. In this case, the coefficient $\alpha$ should be chosen large enough to ensure the convergence through the condition $H-I_d/(2\alpha) \succ 0$, but also such that the covariance matrix $ \alpha \Sigma$ is small. The choice of $\alpha$ is discussed in the next section and should be replaced with a matrix gain.

\section{The asymptotics of conditioned stochastic gradient descent}\label{sec:sto_grad}

This Section first presents practical optimization schemes that fall in the framework of \textit{conditioned} SGD. Then it contains our main results, namely the weak convergence and asymptotic optimality. Another result of independent interest dealing with the almost sure convergence of the gradients and the iterates is also provided.

\subsection{Framework and Examples} \label{sec:csgd_framework}


We introduce the general framework of \textit{conditioned} SGD as an extension of the standard SGD presented in Section \ref{sec:csgd_math_background}. It is defined by the following update rule, for $k\geq 0$,
\begin{equation} \label{eq:cond_sgd}
\theta_{k+1} = \theta_{k} - \gamma_{k+1} C_{k}  g (\theta_{k},\xi_{k+1}), 
\end{equation}
where the \textit{conditioning matrix} $C_{k}\in \mathcal{S}_d^{++}(\rset)$ is a $\mcf_k$-measurable real symmetric positive definite matrix so that the search direction always points to a descent direction. In convex optimization, inverse of the Hessian is a popular choice but \textit{(1)} it may be hard to compute, \textit{(2)} it is not always positive definite and \textit{(3)} it may increase the noise of SGD especially when the Hessian is ill-conditioned.

\noindent
\textbf{Quasi-Newton.} These methods build approximations of the Hessian $C_k \approx \nabla^2 F(\theta_k)^{-1}$ with gradient-only information, and are applicable for convex and nonconvex problems. For scalability issue, variants with limited memory are the most used in practice \citep{liu1989limited}. Following Newton's method idea with the secant equation, the update rule is based on pairs $(s_k,y_k)$ tracking the differences of iterates and stochastic gradients, \textit{i.e.}, $s_k = \theta_{k+1} - \theta_k$ and $y_k = g(\theta_{k+1},\xi_{k+1}) - g(\theta_{k},\xi_{k+1})$. Let $\rho_k = 1/(s_k^\top y_k)$ then the Hessian updates are
\begin{align*}
C_{k+1} = (I-\rho_k y_k s_k^\top)^\top C_k (I-\rho_k y_k s_k^\top)+ \rho_k s_k s_k^\top.
\end{align*}
In the deterministic setting, the BFGS update formula above is well-defined as long as $s_k^\top y_k > 0$. Such condition preserves positive definite approximations and may be obtained in the stochastic setting by replacing the Hessian matrix with a Gauss-Newton approximation and using regularization.

\noindent
\textbf{Adaptive methods and Diagonal scalings.} These methods adapt locally to the structure of the optimization problem by setting $C_k$ as a function of past stochastic gradients. General adaptive methods differ in the construction of the \textit{conditioning} matrix and whether or not they add a momentum term. Using different representations such as dense or sparse conditioners also modify the properties of the underlying algorithm. For instance, the optimizers Adam and RMSProp maintain an exponential moving average of past stochastic gradients with a factor $\tau \in (0,1)$ but fail to guarantee $C_{k+1} \preceq C_k$. Such behaviour can lead to large fluctuations and prevent convergence of the iterates. Instead, AdaGrad and AMSGrad ensure the monotonicity $C_{k+1} \preceq C_k$. 

\begin{table}[h]
\centering
\renewcommand{\arraystretch}{1.1}
\begin{tabular}{llc}
\hline
\textbf{Optimizer} & \textbf{Gradient matrix} $\mathbf{G_{k+1}}$ & $\mathbf{m}$  \\
\hline
AdaFull & $G_k + g_k g_k^\top$ & $0$ \\
\hline
AdaNorm & $G_k + \|g_k\|_2^2$ & $0$ \\
\hline
AdaDiag & $G_k + diag(g_k g_k^\top)$ & $0$ \\
\hline
RMSProp & $\tau G_k + (1-\tau) diag(g_k g_k^\top)$ & $0$ \\
\hline
Adam & $[\tau G_k + (1-\tau) diag(g_k g_k^\top)]/(1-\tau^k)$ & $m$ \\
\hline
AMSGrad &  $[\tau G_k + (1-\tau) diag(g_k g_k^\top)]/(1-\tau^k)$ & $m$ \\
\hline
\end{tabular}
\caption{Adaptive Gradient Methods.}
\label{table:optimizers}
\end{table}

\noindent
Denote by $g_k = g(\theta_k,\xi_{k+1})$ and $m \in [0,1)$ a momentum parameter. General adaptive gradient methods are defined by: $\theta_{k+1} = \theta_k - \gamma_{k+1} C_k \hat g_k, \quad \hat g_k = m \hat g_{k-1} + (1-m) g_k.$ Different optimizers are summarized in Table \ref{table:optimizers} above. They all rely on a gradient matrix $G_k$ which accumulates the information of stochastic gradients. The \textit{conditioning} matrix is equal to $C_k = G_k^{-1/2}$ except for AMSGrad which uses $C_k = \max\{C_{k-1};G_k^{-1/2}\}$. Starting from $G_0 = \delta I$ with $\delta >0$, $G_{k+1}$ is updated either in a dense or sparse (diagonal) manner or using an exponential moving average. Note that \textit{conditioned SGD} methods also include schemes with general estimation of the matrix $C_k$ such as Hessian sketching \citep{gower2016stochastic} or Jacobian sketching \citep{gower2021stochastic}.

 A common assumption made in the literature of adaptive methods is that \textit{conditioning} matrices are well-behaved in the sense that their eigenvalues are bounded in a fixed interval. This property is easy to check for diagonal matrices and can always be implemented in practice using projection.
\subsection{Main result} \label{sec:second_order}
\noindent
Similarly to standard SGD, it is interesting to search for an appropriate rescaled process to obtain some convergence rate and asymptotic normality results. In fact the only additional assumption needed, compared to SGD, is the almost sure convergence of the sequence $(C_k)_{k\geq 0}$. This makes Theorem \ref{TH:CLT_sgd} a particular case of the following Theorem which is the main result of the paper (the proof is given in Appendix \ref{sec:proof_clt}).


\begin{theorem}[Weak convergence of Conditioned SGD] \label{th:convergence_weak} 
 Let $(\theta_k)_{k \geq 0}$ be obtained by \textit{conditioned} SGD \eqref{eq:cond_sgd}. Suppose that Assumptions \ref{ass:unbiased}, \ref{ass:robbins_monro2}, \ref{ass:hessian}, \ref{ass:cov}, \ref{ass:lyapunov_bound} are fulfilled and that $\theta_k \to \theta^\star$ almost surely. If moreover, $C_k \to C \in \mathcal{S}_d^{++}(\rset)$ almost surely and all the eigenvalues of $(CH-\zeta I)$ are positive with $\zeta = \1_{\{\beta=1\}}/2\alpha$, it holds that
\begin{align*}
\frac{1}{\sqrt{\gamma_k}}  (\theta_k - \theta^\star) \leadsto \mathcal{N}(0,\Sigma_{C} ), \qquad \text{as } k\to \infty,
\end{align*}
where $\Sigma_{C}$ satisfies: $\left(CH - \zeta I_d \right) \Sigma_C + \Sigma_C \left( CH - \zeta I_d \right)^\top = C \Gamma C^\top.$
\end{theorem}
\noindent


\medskip

\noindent  \textbf{Sketch of the proof.} The idea of the proof is to rely on the following bias-variance decomposition. Remark that the difference $\Delta_{k} = \theta_{k}-\theta^\star$ is subjected to the iteration:
\begin{align*}
\Delta_{k+1}  = \Delta_{k} - \gamma_{k+1} C_{k} \nabla F(\theta_ {k}) + \gamma_{k+1} C_{k} w_{k+1}  ,\qquad k\geq 0.
\end{align*}
In a similar spirit as in \cite{delyon1996general}, we use the Taylor approximation $\nabla F(\theta_{k}) = \nabla F(\theta^\star) + H (\theta_{k} - \theta^\star) + o(\theta_{k} - \theta^\star) \simeq H (\theta_{k} - \theta^\star) $  to  define the following auxiliary linear stochastic algorithm which carries the same variance as the main algorithm,
\begin{align*}
&\widetilde{ \Delta}_{k+1} = \widetilde{ \Delta}_{k}- \gamma_{k+1} K \widetilde{ \Delta}_{k} +  \gamma_{k+1} C_{k} w_{k+1} ,\qquad  k\geq 1,
\end{align*}
where $K = CH$. As a first step we establish the weak convergence of $\widetilde{ \Delta}_{k+1}  $ using discrete martingale tools.
Note that the analysis is made possible because the matrix $K$ is fixed along this algorithm. As a second step, we prove that the difference $(\Delta_k- \widetilde{ \Delta}_k) $, which represents some bias term, is negligible.

%

\smallskip
\noindent  \textbf{Comparison with previous works.} Theorem \ref{th:convergence_weak} stated above is comparable to Theorem 1 given in \cite{pelletier1998weak}. 
However, our result on the weak convergence cannot be recovered from the one of \cite{pelletier1998weak} due to their Assumption (A1.2) about convergence rates. Indeed, this assumption would require that the sequence $(C_k)_{k\geq 0}$ converges towards $C$ faster than $\sqrt{\gamma_k}$. This condition is either hardly meet in practice or difficult to check. Unlike this prior work, our result only requires the almost sure convergence of the sequence $(C_k)_{k\geq 0}$. In a more restrictive setting of convex objective and online learning framework, \textit{i.e.} in which data becomes available in a sequential order, another way to obtain the weak convergence of the rescaled sequence of iterates $(\theta_k-\theta^\star)/\sqrt{\gamma_k}$ is to rely on the results of \cite{boyer2020asymptotic}. However, once again, their work rely on a particular convergence rate for the matrix sequence $(C_k)_{k\geq 0}$. This implies the derivation of an additional result on the almost sure convergence rate of the iterates. To overcome all these issues, we show in Appendix \ref{sec:application} that our conditions on the matrices $C_k$ are easily satisfied in common situations.

\subsection{Asymptotic optimality of Conditioned SGD}
\label{sec:optimality_corollary} 

The best \textit{conditioning} matrix $C$ that could be chosen regarding the asymptotic variance is specified in the next proposition whose proof is given in the supplementary material (Appendix \ref{sec:add_prop}).

\begin{proposition}[Optimal choice]\label{prop:asymp_optimal} 
The choice $C^\star=H^{-1}$ is optimal in the sense that $\Sigma_{C^*} \preceq \Sigma_{C}$ for all $C \in \mathcal{C}_H$. Moreover, we have $\Sigma_{C^\star} = H^{-1} \Gamma H^{-1}$.
\end{proposition}

Another remarkable result, which directly follows from the Theorem \ref{th:convergence_weak} is now stated as a corollary.

\begin{corollary}[Asymptotic optimality] \label{th:convergence_as_final} 
Under the assumptions of Theorem \ref{th:convergence_weak}, if $\gamma_k = 1/k$ and  $C  =  H^{-1}$, then $$\sqrt{k} (\theta_k - \theta^\star) \leadsto \mathcal{N}(0, H^{-1} \Gamma H^{-1} ), \quad \text{ as } k\to \infty.$$ Moreover, let $(Z_1,\ldots, Z_d) \sim \mathcal N (0 , I_d ) $ and $(\lambda_k)_{k=1,\ldots, d}$ be the eigenvalues of the matrix $H^{-1/2} \Gamma H^{-1/2}$, we have the convergence in distribution: $${k} (F(\theta_k) - F(\theta^\star) ) \leadsto \sum_{k=1} ^d \lambda_k^{} Z_k^2, \quad \text{ as } k\to \infty.$$
\end{corollary}

\noindent
This result shows the success of the proposed approach as the asymptotic variance is the optimal one. It provides the user a practical choice for the sequence of rate, $\gamma_k = 1/k$ and also removes the assumption that $2\alpha H \succ I_d$ which is usually needed in SGD (see Theorem \ref{TH:CLT_sgd}). Concerning the almost sure convergence of the \textit{conditioning} matrices, we provide in Appendix \ref{sec:application} an explicit way to ensure that $C_k \to H^{-1}$.  

The above statement also provides insights about the convergence speed. It claims that the convergence rate of $F(\theta_k)$ towards the optimum $F(\theta^\star)$, in $1/k$, is faster than the convergence rate of the iterates, in $1/\sqrt k$. Another important feature, which is a consequence of Proposition \ref{prop:asymp_optimal}, is that the eigenvalues $(\lambda_k)_{k=1,\ldots, d}$ that appear in the limiting distribution are the smallest ones among all the other possible version of \textit{conditioned} SGD (defined by the matrix $C$).

\subsection{Convergence of the iterates $(\theta_k)$ of Conditioned SGD}\label{sec:as_CSGD}

To apply both Theorem \ref{th:convergence_weak} and Corollary \ref{th:convergence_as_final}, it remains to check the almost sure convergence of the iterates. In a non-convex setting, the iterates of stochastic first-order methods can only reach local optima, \textit{i.e.} the iterates are expected to converge to the following set $\mathcal{S} = \{\theta \in \rset^d: \nabla F(\theta)=0\}$. Going in this direction, we first prove the almost sure convergence of the gradients towards zero for general \textit{conditioned} SGD methods under mild assumptions. This theoretical result may be of independent interest. Under a condition on $\mathcal{S}$, one may uniquely identify a limit point $\theta^\star$ and consider the event $\{\theta_k \to \theta^\star\}$ which is needed for the weak convergence results. The next analysis is based on classical assumptions which are used in the literature to obtain the convergence of standard SGD. 
\begin{assumption}[L-smooth] \label{ass:smooth} 
$\exists L>0: \forall \theta,\eta \in \rset^d, \quad \|\nabla F(\theta)-\nabla F(\eta)\|_{2} \leq L\|\theta-\eta\|_{2}.$
\end{assumption}
\begin{assumption}[Lower bound] \label{ass:ident} $\exists F^\star \in \rset: \forall \theta \in \rset^d, F^\star \leq F(\theta).$ 
\end{assumption}
\noindent
To handle the noise of the stochastic estimates, we consider a weak growth condition, related to the notion of \textit{expected smoothness} as introduced in \cite{gower2019sgd} (see also \cite{gazagnadou2019optimal,gower2021stochastic}). In particular, we extend the condition of \cite{gower2019sgd} to our general context in which the sampling distributions are allowed to change along the algorithm.

\begin{assumption}[Growth condition] \label{ass:exp_smooth}  With probability $1$, there exist  $0 \leq \mathcal{L},\sigma^2 < \infty$ such that for all $\theta \in \rset^d, k \in \nset, \quad \expect\left[\|  g(\theta,\xi_{k+1})\|_2^2 | \mcf_{k}\right] \leq 2 \mathcal{L} ( F(\theta) - F^\star) + \sigma^2.$
\end{assumption}
\noindent
This almost-sure bound on the stochastic noise $ \expect\left[ \| g (\theta,\xi_{k})  \|_2^2 | \mcf_{k-1} \right] $ is key in the analysis of the \textit{conditioned} SGD algorithm. This weak growth condition on the stochastic noise is general and can be achieved in practice with a general Lemma available in the supplement (Appendix \ref{sec:links}). Note that Assumption \ref{ass:exp_smooth}, often referred to as a \textit{growth condition}, is mild since it allows the noise to be large when the iterate is far away from the optimal point. In that aspect, it contrasts with uniform bounds of the form  $\expect\left[\| g(\theta_{k} ,\xi_{k+1})\|_2^2 | \mcf_{k} \right] \leq \sigma^2$ for some deterministic $\sigma^2 >0$ (see \cite{nemirovski2009robust,nemirovsky1983problem,shalev2011pegasos}). Observe that such uniform bound is recovered by taking $\mathcal{L}=0$ in Assumption \ref{ass:exp_smooth} but cannot hold when the objective function $F$ is strongly convex \citep{nguyen2018sgd}. Besides, fast convergence rates have been derived in \cite{schmidt2013fast} under the \textit{strong-growth condition}: $\expect[ \| g (\theta,\xi_{k+1})    \|_2^2 | \mcf_{k}] \leq M \| \nabla F(\theta) \|_2^2$  for some $M >0$. Similarly to our growth condition, \cite{bertsekas2000gradient} and \cite{bottou2018optimization} performed an analysis under the condition $ \expect[\| g(\theta,\xi_{k+1}) \|_2^2 | \mcf_{k} ] \leq M \|\nabla F(\theta )\|_2^2 + \sigma^2$ for $M,\sigma^2>0$. Under Assumptions \ref{ass:smooth} and \ref{ass:ident}, we have $\| \nabla F(\theta) \|_2^2 \leq 2 L \left(F(\theta)-F(\theta^\star) \right)$ \cite[Proposition A.1]{gower2019sgd} so our growth condition is less restrictive. If $F$ satisfies the Polyak-Lojasiewicz condition \citep{karimi2016linear}, then our growth condition becomes a bit stronger. Another weak growth condition has been used for a non-asymptotic study in  \cite{moulines2011non}. The success of \textit{conditioned} SGD relies on the following extended Robbins-Monro condition which ensures a control on the eigenvalues of the \textit{conditioning} matrices.
\begin{assumption}[Eigenvalues] \label{ass:robbins_monro_3} 
Let $(\mu_k)_{k\geq 1}$ and $(\nu_k)_{k\geq 1}$ be positive sequences such that: \\ $
\forall k \geq 1, \mu_k I_d \preceq C_{k-1} \preceq \nu_k I_d; \ \ \sum_{k} \gamma_{k} \nu_k = +\infty; \ \ \sum_{k} (\gamma_k  \nu_k) ^2 < +\infty; \ \ \limsup_k \nu_ k / \mu_k <\infty$ a.s.
\end{assumption}
The last condition deals with the ratio $(\nu_k/\mu_k)$ which may be seen as a conditioned number and ensures that the matrices $C_k$ are well-conditioned. The following Theorem reveals that all these assumptions are sufficient to ensure the almost sure convergence of the gradients towards zero.
\begin{theorem}[Almost sure convergence] \label{th:convergence_as} 
Suppose that Assumptions \ref{ass:unbiased}, \ref{ass:smooth}, \ref{ass:ident}, \ref{ass:exp_smooth}, \ref{ass:robbins_monro_3} are fulfilled. Then $(\theta_k)_{k \geq 0}$ obtained by \textit{conditioned} SGD \eqref{eq:cond_sgd} satisfies $\nabla F(\theta_k) \to 0$ as $k\to \infty$ a.s.
\end{theorem}
Other convergence results concerning the sequence of iterates towards global minimizers may be obtained by considering stronger assumptions such as convexity or that $F$ is coercive and the level sets of stationary point $\mathcal{S} \cap \{\theta,F(\theta)=y\}$ are locally finite for every $y \in \rset^d$ (see \citet{gadat2018stochastic}). In our analysis, the proof of Theorem \ref{th:convergence_as} reveals that $\theta_{k+1}-\theta_{k} \to 0$ in $L^2$ and almost surely. Thus, as soon as the stationary points are isolated, 
the sequence of iterates will converge towards a unique stationary point $\theta^\star \in \rset^d$. This result is stated in the next Corollary.

\begin{corollary}[Almost sure convergence] \label{cor:as_conv_coercive}Under the assumptions of Theorem \ref{th:convergence_as}, assume that $F$ is coercive and let $(\theta_k)_{k \geq 0}$ be the sequence of iterates obtained by the \textit{conditioned} SGD \eqref{eq:cond_sgd}, then $d(\theta_k,\mathcal{S})\to 0$ as $k \to \infty$. In particular, if $\mathcal{S}$ is a finite set, $(\theta_k)$ converges to some $\theta^\star \in \mathcal{S}$.
\end{corollary}

\section{Asymptotic optimality in Adaptive importance sampling} 
\label{sec:AIS_example}

The aim of this section is to demonstrate that statistical efficiency can be ensured in variational inference problems through the combination  of adaptive importance sampling and \textit{conditioned} SGD. While well-known results regarding the asymptotic optimality of maximum likelihood estimates (MLE) obtained from \textit{conditioned} SGD (see for instance \cite{amari1998natural} or  \cite{bercu2020efficient}) are initially revisited, attention is subsequently shifted towards the variational inference topic relying on adaptive sampling schemes methods. A novel result is then presented, asserting that even within this challenging framework, \textit{conditioned} SGD allows for the recovery of a certain statistical efficiency.

\subsection{Maximum likelihood estimation}

Assume that $(X_k)_{k\geq 1} $ is an independent sequence of random variables with distribution $q^\star$. Consider a parametric family $\{ q_\theta\, : \, \theta\in \Theta\}  $ from which we aim to obtain an estimate of $q^\star$. We further assume that the model is well-specified, \textit{i.e.} $q^\star = q_{\theta^\star} $ for some $\theta^\star \in  \Theta $. The MLE is given by
\begin{align*}
\hat \theta_n \in \arg\max _{ \theta\in \Theta}  \sum_{i=1} ^n  \log(q_\theta (X_i)).
\end{align*}
Under suitable condition \citep{vandervaart:1998}, it is well known that $\hat \theta_n $ is efficient, meaning it is asymptotically unbiased and has the smallest achievable variance. The Cramer-Rao bound is given by the inverse of the Fisher information matrix, denoted by $ \mathcal I ^{-1}$ and defined as
\begin{align*}
 \mathcal I  =  \int \nabla_\theta \log(q_{\theta^\star} )  \nabla_\theta \log(q_{\theta^\star})^\top q_{\theta^\star} d \lambda.
\end{align*}
Unfortunately, the estimate $\hat \theta_n$ is often unknown in closed-form, requiring the use of a sequential procedure for approximation. This raises the further question of whether the estimate obtained through the sequential procedure achieves the efficiency bound. When using standard SGD without conditioning, the update rule is $\theta_{k+1} = \theta_{k} - \gamma_{k+1} \nabla \log(q_{\theta_{k}} (X_{k+1}))$. However, the optimal variance bound is not achieved in this case. To recover efficiency, one can rely on \textit{conditioned} SGD, incorporating a conditioning matrix that estimates the inverse of the Hessian. In light of the definition of the Fisher information matrix, the conditioning matrix can be estimated iteratively using at each step a new sample $X_{k+1}$ as follows
$$ \mathcal I_{k+1}   =   (1-\gamma_{k+1} ) \mathcal I_{k}   +  \gamma_{k+1}   \nabla  \log(q_{\theta_{k}} (X_k) )  \nabla_{\theta} \log(q_{\theta_{k}} (X_k)  )  ^\top   $$
and then relying on the CSGD algorithm with update rule $ \theta _{k+1}   =    \theta_{k}   -   \gamma_{k+1} \mathcal I_{k}^{-1}   \nabla  \log(q_{\theta_{k}} (X_{k+1}) )  $. As a consequence of Theorem \ref{th:convergence_weak}, under stipulated assumptions, one can recover the optimal bound $\mathcal I ^{-1} $ as the asymptotic variance of $ \sqrt k ( \theta_k - \theta^*)$.

\subsection{Adaptive importance sampling}

Consider the variational inference problem where the aim is to approximate a target distribution $q^\star = q _{\theta^\star}$ based on a family of density $\{ q_\theta\, : \, \theta\in \Theta\}$. 
Unlike the previous statistical framework, one does not have access to random variables distributed according to $q^\star$. Instead, one can usually evaluate the target function $q^\star$. More background about this type of problem might be found in \cite{zhang2018advances}. In the following, we show that \textit{conditioned} SGD methods allow to achieve the same variance as the optimal variance described in the previous statistical setting. To the best of our knowledge, this result is novel and has potential implications in variational inference problems using forward KL (or $\alpha$-divergence) as described in \cite{jerfel2021variational} and Section 5.2 in \cite{zhang2018advances}. Consider the objective function defined as the Kullback-Liebler divergence between a sampler $q_{\theta}$ and the target distribution $q^\star$, \textit{i.e.},
\begin{align*}
F(\theta) = - \int \log(q_{\theta} / q^\star) q^\star d \lambda .
\end{align*}
Under regularity conditions, the gradient and Hessian are respectively written as $\nabla_\theta F(\theta) = - \expect_{q^\star}[ \nabla_\theta  \log(q_{\theta} )]$ and $\nabla_\theta^2 F(\theta) = - \expect_{q^\star}[ \nabla_\theta^2  \log(q_{\theta} )]$. Stochastic gradients can be defined using adaptive importance sampling-based estimate as in \cite{delyon+p:2018}. Given the current iterate $\theta_{k}$, one needs to generate $X_{k+1} $ from $q_{\theta_{k}}$  and compute the (unbiased) stochastic gradient $g_{k+1}  =  v_{k+1} \nabla_{k+1}$ where $v_{k+1}  =  q^\star(X_{k+1} ) /   q_{\theta_{k}} (X_{k+1} ) $ and  $ \nabla_{k+1} = \nabla_\theta \log(q_{\theta_{k}} (X_{k+1} ) ) $. Based on our almost sure convergence result, one can obtain that $\theta_{k} \to \theta^*$ and then deduce 
\vspace{-0.3cm}
\begin{align*}
\Gamma = \lim_{k\to \infty}  \mathbb E [ g_{k+1} g_{k+1} ^\top] &= \lim_{k \to \infty}   \int   \nabla_\theta  \log(q_{\theta_{k}}  )   \nabla_\theta \log(q_{\theta_{k}}  )^\top \frac{ q^{\star2} }{  q_{\theta_{k}} } d \lambda = \mathcal I,
\end{align*}
where the value $\mathcal I$ for the limit comes from replacing $q_{\theta_{k}}$ by its limit $q_{\theta^*} = q^*$. The choice of the \textit{conditioning} matrix $C_k $ may be done using an auxiliary algorithm of the following form
\begin{align*}
C_{k+1} = (1-\gamma_{k+1} ) C_k  +\gamma _{k+1} v_{k+1}  \nabla _{k+1} \nabla _{k+1}^\top.
\end{align*}
It can be shown that the sequence of \textit{conditioning} matrices $(C_k)$ converges to $ \mathcal I$. Thus, Theorem \ref{th:convergence_weak} implies that \textit{conditioned} SGD is efficient in this framework as it matches the lower bound of the previous less restrictive statistical framework in which $q_k = q^\star$. Similar computations, left for future work, may be performed to investigate if the same optimal variance can be achieved with more general similarity measures such as $\alpha$-divergences  \citep{dau+d+p:2021}.

\section{Conclusion and Discussion} 
\label{sec:conclusion}

We derived an asymptotic theory for \textit{Conditioned} SGD methods in a general non-convex setting. Compared to standard SGD methods, the only additional assumption required to obtain the weak convergence is the almost sure convergence of the \textit{conditioning} matrices. The use of appropriate \textit{conditioning} matrices with the help of Hessian estimates is the key to achieve asymptotic optimality. While our study focuses on the weak convergence of the rescaled sequence of iterates - an appropriate tool to deal with efficiency issues since algorithms can be easily compared through their asymptotic variances - it would be interesting to complement our asymptotic results with concentration inequalities. This research direction, left for future work, may be done at the cost of extra assumptions, \textit{e.g.}, strong convexity of the objective function combined with bounded gradients. Furthermore, by using some recent results on the behavior of adaptive gradient methods in non-convex settings \citep{daneshmand2018escaping,staib2019escaping,antonakopoulos2022adagrad}, another research direction would be to extend the current weak convergence analysis to edge cases where the objective function possesses saddle points.

From a practical standpoint, the approach proposed in Appendix \ref{sec:application} may not be computationally optimal as it requires eigenvalue decomposition. However, \textit{conditioned} SGD methods and especially stochastic second-order methods do not actually require the full computation of a matrix decomposition but rely on matrix-vector products which may be performed in $O(d^2)$ operations. Futhermore, using low-rank approximation with BFGS algorithm \citep{broyden1970convergence,fletcher1970new,goldfarb1970family,shanno1970conditioning} and its variant L-BFGS \citep{liu1989limited}, those algorithms approximately invert Hessian matrices in $O(d)$ operations. More recently, this technique was extended to the online learning framework \citep{schraudolph2007stochastic} and a purely stochastic setting \citep{moritz2016linearly}. Similarly, the different adaptive optimizers presented in Section \ref{sec:csgd_framework} are concerned with both fast computations and high precision. Designing an efficient \textit{conditioned} SGD algorithm involves a careful trade-off between the low-memory storage of the scaling matrix representation $C_k$ and the quality of its approximation of either the inverse Hessian $\nabla^2 F(\theta^\star)^{-1}$ or the information brought in by the underlying geometry of the problem.  


\section*{Acknowledgments}

The authors are grateful to the Associate Editor and three anonymous Reviewers for their many valuable comments and interesting suggestions.

\bibliographystyle{tmlr}
\bibliography{main.bbl}
\newpage
\appendix
\begin{center}
{\Large {\bf\textsc{Appendix: Asymptotic Analysis of \\ Conditioned Stochastic Gradient Descent}}}
\end{center}

Appendix \ref{sec:proofs} contains the mathematical proofs of the main results while Appendix \ref{sec:application} is dedicated to a practical procedure and numerical experiments for illustration purposes. Appendix \ref{sec:aux_lemma} gathers some technical auxiliary results and additional propositions.

\appendix
\startcontents[sections]
\printcontents[sections]{l}{1}{\setcounter{tocdepth}{2}}

\section{Proofs of main results} \label{sec:proofs}
\subsection{Proof of the weak convergence (Theorem \ref{th:convergence_weak})} \label{sec:proof_clt}
For any matrix $A \in \rset^{d \times d}$, we denote by $\|A\|= \max_{\|u\|_2=1}\|Au\|_2$ the operator norm associated to the Euclidian norm and by $\rho(A) $ the spectral radius of $A$, \textit{i.e.},  $\rho(A) = \max\{|\lambda_1|,\ldots,|\lambda_n|\}$ where $\lambda_1,\ldots,\lambda_n$ are the eigenvalues of $A$. We also introduce $\lambda_{\min}  (A) = \min\{|\lambda_1|,\ldots,|\lambda_n|\}$. Note that when $A$ is symmetric $\|A\|=\rho(A)$ and recall that the spectral radius is a (submultiplicative) norm on the real linear space of symmetric matrices. 

\medskip
\noindent\textbf{Structure of the proof.}

\noindent In virtue of Assumption \ref{ass:lyapunov_bound}, there exist $\delta, \varepsilon >0$ such that almost surely
\begin{align}\label{cond:lyapunov_bound}
\sup_{k\geq 0} \expect[ \|w_{k+1}\|_2 ^{2+\delta}  |\mcf_{k} ] \1 _{\{\|\theta_k - \theta^\star\|_2\leq \varepsilon\} }   <\infty .
\end{align}
An important event in the following is 
\begin{align*}
\mathcal A_k = \{ \|\theta_k - \theta^\star\|_2 \leq \varepsilon, \, \|C_k\| < 2 \|C\| ,\,  \| \Gamma_k \|    \leq 2 \|\Gamma\|    
\}.
\end{align*} 
By assumption, this event has probability going to $1$.

Introduce the difference
\begin{align*}
\Delta_{k} &= \theta_{k}-\theta^\star ,
\end{align*}
and remark that $\Delta_k$ is subjected to the iteration:
\begin{align*}
&  \Delta_{0 } = \theta_0 - \theta^\star, \\
&  \Delta_{k+1}  = \Delta_{k} - \gamma_{k+1} C_{k} \nabla F(\theta_ {k}) + \gamma_{k+1} C_{k} w_{k+1}  ,\qquad k\geq 0,
\end{align*}
with $w_{k+1} =  \nabla F(\theta_{k}) - g ( \theta_ {k} , \xi_{k+1})$. We have by assumption that $ C_k \to C$ almost surely and we can define $K = \lim_{k\to \infty} C_{k} H = C H$. 
The proof relies on the introduction of an auxiliary stochastic algorithm which follows the iteration: 
\begin{align*}
&\widetilde{ \Delta}_0 = {\theta}_{0} - \theta^\star\\
&\widetilde{ \Delta}_{k+1} = \widetilde{ \Delta}_{k}- \gamma_{k+1} K \widetilde{ \Delta}_{k} +  \gamma_{k+1} C_{k} w_{k+1} \1 _{ \mathcal A_{k}   },\qquad  k\geq 0
\end{align*}
The previous algorithm is a linear approximation of the algorithm that defines $\Delta_k$ in the sense that  $\nabla F(\theta_{k}) = \nabla F(\theta^\star) + H (\theta_{k} - \theta^\star) + o(\theta_{k} - \theta^\star) \simeq H (\theta_{k} - \theta^\star) $ has been linearly expanded around $\theta^\star$.
Writing
\begin{align*}
\Delta_{k} = \widetilde \Delta_{k} +  (\Delta_{k} - \widetilde{\Delta}_ k ),
\end{align*}
and invoking the Slutsky lemma, the proof will be complete as soon as we obtain that
\begin{align}\label{eq:weak_cv}
&\gamma_k^{-1/2} \widetilde \Delta_{k} \leadsto \mathcal N ( 0, \Sigma) ,\\
\label{eq:petito} &(\Delta_{k} - \widetilde \Delta_k)  = o_{\mathbb P } (\gamma_k^{1/2}  ).
\end{align}
Denote by $\sqrt{H}$ the positive square root of the real symmetric positive definite matrix $H$ and consider the transformation $\Theta_k = \sqrt{H} \widetilde{\Delta}_k $ which satisfies
\begin{align*}
&\Theta_0 = \sqrt{H} \widetilde \Delta_0\\
&\Theta_{k+1} = \Theta_{k} - \gamma_{k+1} \widetilde K \Theta_{k} +  \gamma_{k+1} \sqrt{H} C_{k} w_{k+1} \1 _{ \mathcal A_{k}   },\qquad  k\geq 1,
\end{align*}
where $\widetilde K = \sqrt{H} C \sqrt{H} \in \mathcal{S}_d^{++}(\rset)$ is a real symmetric positive definite matrix.
The sequence $(\Theta_{k})_{k\geq 0}$ is easier to study than $\widetilde{\Delta}_k$ because contrary to $\widetilde K$, the matrix $K = CH$ is not symmetric in general unless $C$ and $H$ commute. In view of Assumption \ref{ass:hessian}, the eigenvalues of $\widetilde{K}$ are real and positive. Denote by $\lambda_m$ (\textit{resp.} $\lambda_M$) the smallest (\textit{resp.} the largest) eigenvalue of $\widetilde K$, \textit{i.e.},
\begin{align*}
\lambda_m = \lambda_{\min}(\widetilde{K}), \quad  \lambda_M = \lambda_{\max}(\widetilde{K}).
\end{align*}
Because $CH$ is similar to $\widetilde{K}$, they share the same eigenvalues. Since by assumption, the eigenvalues of $(CH-\zeta I_d)$ are positive, we have $2\alpha \lambda_m  > \1_{\{\beta = 1\}}$.
%
For all $k \geq 1$, introduce the real symmetric matrix $A_ k =  I - \gamma_{k} \widetilde{K}$. Observe that all these matrices commute, i.e., for any $i,j \geq 0$, we have $A_i A_j = A_j A_i$. For any $k,n \geq 0$, denote the matrices product 
\begin{align*}
 \left\{
    \begin{array}{ll}
        \Pi_{n,k} &= A_n \ldots A_{k+1} \text{ if } k<n \\
        \Pi_{n,k} &= I_d \text{ if } k \geq n,\Pi_n = \Pi_{n,0}
    \end{array}
\right.
\end{align*}
Since the matrices $A_k$ commute, we have $\Pi_{n,k}^\top = \Pi_{n,k}$ is also real symmetric. \\

\noindent\textbf{Step 1. Proof of Equation \eqref{eq:weak_cv}.}  \\
The random process $(\Theta_k)_{k\geq 0}$ follows the recursion equation
\begin{align*}
 \Theta_k =  A_k  \Theta_{k-1}   + \gamma_{k} \sqrt{H} C_{k-1} w_{k} \1 _{ \mathcal A_{k-1}   } .
\end{align*}
We have by induction
\begin{align*}
 \Theta_{n} = \Pi_n  \Theta_0 + \sum\limits_{k=1}^{n} \gamma_{k} \Pi_{n,k}^{} \sqrt{H} C_{k-1}w_{k} \1 _{ \mathcal A_{k-1}   },
\end{align*}
and the rescaled process is equal to
\begin{align*}
  \frac{\Theta_n }{\sqrt{\gamma_n}} = \underbrace{\vphantom{\sum\limits_{k=1}} \frac{\Pi_n}{\sqrt{\gamma_n}} \Theta_0}_{initial \ error \ Y_n} + \underbrace{\sum\limits_{k=1}^{n} \frac{\gamma_{k}}{\sqrt{\gamma_n}} \Pi_{n,k}^{} \sqrt{H} C_{k-1} w_{k} \1 _{ \mathcal A_{k-1}   } }_{sampling \ error \ M_n} .
\end{align*}


\noindent \textbf{Bound on the initial error.} \ \\
Define $\tau_n = \sum_{k=1}^n \gamma_k$ the partial sum of the learning rates. Since $\Pi_n$ is symmetric, we have $\rho(\Pi_n \Theta_0) \leq \rho(\Pi_n) \|\Theta_0\|_2$. In view of Lemma \ref{lemma:norm_prod}, since $\gamma_k \to 0$, there exists $j \geq 1$ such that
\begin{align*}
\rho(\Pi_n) \leq \rho(\Pi_j) \exp(-\lambda_m (\tau_n-\tau_j)).
\end{align*}
Therefore, the initial error is bounded by
\begin{align*}
\rho(Y_n) \leq \rho(\Pi_j) \exp(\lambda_m \tau_j)\|\Theta_0\|_2\exp(d_n) \quad \text{ with } \quad d_n = -\lambda_m \tau_n -\log(\sqrt{\gamma_{n}}).
\end{align*}
Using Lemma \ref{lemma:equiv}, we can treat the two cases $\beta < 1$ and $\beta = 1$. On the one hand, if $\beta <1$ then we always have $d_n \rightarrow -\infty$. On the other hand, if $\beta=1$, we have $d_n \sim \left( \frac{1}{2} - \gamma \lambda_m \right)\log(n)$ and the condition $2\alpha \lambda_m -1 > 0$ ensures $d_n \rightarrow -\infty$. In both cases we get $\exp(d_n) \to 0$ and the initial error vanishes to 0. \\

\noindent \textbf{Weak convergence of the sampling error.} \ \\
Consider the random process
\begin{align*}
M_n = \gamma_n^{-1/2 } \sum\limits_{k=1}^{n} \gamma_{k} \Pi_{n,k}^{} \sqrt{H} C_{k-1} w_{k} \1 _{ \mathcal A_{k-1}   } .
\end{align*}
Note that $\theta_k$, $ \mathcal A_{k}  $ and $C_k$ are $\mcf_k$-measurable. As a consequence, $M_n$ is a sum of martingale increments and we may rely on the following central limit theorem for martingale arrays.

\begin{theorem}\citep[Corollary 3.1]{hall:2014}\label{th:hall}
Let $(W_{n,i})_{1\leq i\leq n,\, n\geq 1} $ be a triangular array of random vectors such that
\begin{align}\label{hh1}
&\mathbb E [W_{n,i}\mid \mathcal F _{i-1}] = 0,\quad \text{for all }1\leq i \leq n,\\
&\sum_{i=1}^ n \mathbb E[ W_{n,i} W_{n,i} ^\top \mid \mathcal F _{i-1} ] \to V^*\geq 0,\quad \text{in probability,}\label{hh2}\\
&\sum_{i=1}^ n \mathbb E[ \|W_{n,i}\| ^2 \1 _ {\{ \|W_{n,i}\| > \varepsilon \} } \mid \mathcal F _{i-1} ] \to 0 ,\quad \text{in probability,}\label{hh3}
\end{align}
then, $\sum_{i=1}^ n W_{n,i} \leadsto \mathcal N (0,V^*) $,  as $ n\to \infty$.
\end{theorem}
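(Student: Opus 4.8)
The final statement is the multivariate martingale central limit theorem (Corollary 3.1 of \cite{hall1980martingale}). Since it is quoted, the honest ``proof'' in the paper is the citation itself; but here is the route I would take to establish it from scratch. The plan is to reduce the vector-valued conclusion to a scalar martingale CLT by the Cramér--Wold device, and then to prove the scalar version by the complex-exponential (characteristic function) method. Concretely, it suffices to show for every fixed $\lambda \in \mathbb{R}^d$ that $\lambda^T \sum_{i=1}^n W_{n,i} = \sum_{i=1}^n X_{n,i} \leadsto \mathcal{N}(0,\sigma^2)$ with $X_{n,i} = \lambda^T W_{n,i}$ and $\sigma^2 = \lambda^T V^* \lambda$. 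Condition \eqref{hh1} makes $(X_{n,i})$ a scalar martingale difference array; \eqref{hh2} gives $\sum_{i=1}^n \mathbb{E}[X_{n,i}^2\mid\mathcal{F}_{i-1}] \to \sigma^2$ in probability; and, since $|X_{n,i}| \leq \|\lambda\|\,\|W_{n,i}\|$, condition \eqref{hh3} yields the scalar conditional Lindeberg condition. If $\sigma^2 = 0$ the limit is the point mass at $0$ and the conditional-variance condition already forces $\sum_i X_{n,i} \to 0$ in probability, so I may assume $\sigma^2 > 0$.

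\emph{The characteristic-function argument.} Fix $t \in \mathbb{R}$, write $S_n = \sum_{i=1}^n X_{n,i}$, and introduce the complex product $T_n = \prod_{i=1}^n (1 + it X_{n,i})$. Each factor has conditional expectation $1$, so the partial products form a martingale and $\mathbb{E}[T_n] = 1$. The identity $e^{itS_n} = T_n \exp(R_n)$ holds, with $R_n = \sum_{i=1}^n \{ it X_{n,i} - \log(1 + it X_{n,i}) \}$ and the principal branch, once $\max_i |X_{n,i}| < 1$. The expansion $it u - \log(1 + itu) = -\tfrac{1}{2} t^2 u^2 + O(|tu|^3)$ gives $R_n = -\tfrac{t^2}{2}\sum_{i=1}^n X_{n,i}^2 + (\text{cubic remainder})$. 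The quadratic term converges to $-\tfrac{t^2}{2}\sigma^2$ in probability: the conditional Lindeberg condition makes $\sum_i (X_{n,i}^2 - \mathbb{E}[X_{n,i}^2\mid\mathcal{F}_{i-1}])$ vanish in probability, so together with \eqref{hh2} one gets $\sum_i X_{n,i}^2 \to \sigma^2$; the same condition forces $\max_i |X_{n,i}| \to 0$, which both validates the logarithm expansion and makes the cubic remainder negligible since it is bounded by $C|t|^3 \max_i|X_{n,i}| \cdot \sum_i X_{n,i}^2$. Hence $R_n \to -\tfrac{t^2}{2}\sigma^2$ in probability.

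\emph{Passing to expectations, and the main obstacle.} The heuristic $e^{itS_n} = T_n e^{R_n} \approx T_n\, e^{-t^2\sigma^2/2}$ together with $\mathbb{E}[T_n]=1$ suggests $\mathbb{E}[e^{itS_n}] \to e^{-t^2\sigma^2/2}$, which is the Gaussian characteristic function, whence the scalar CLT by Lévy's continuity theorem. The delicate point, and the main obstacle, is that $R_n$ converges only in probability while $T_n$ is not a priori uniformly integrable, so one cannot naively take the limit inside the expectation. The standard remedy I would follow is truncation: replace each $X_{n,i}$ by $X_{n,i}\mathbb{1}_{|X_{n,i}|\le\varepsilon}$, recentred so the array remains a martingale difference array. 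The associated product $T_n^{(\varepsilon)}$ is bounded in $L^2$ by an iterated-conditioning estimate using $1+x\le e^x$ and the (tight) sum of conditional variances, hence uniformly integrable, so dominated convergence applies; meanwhile \eqref{hh3} guarantees that truncation perturbs $S_n$, the quadratic variation, and $T_n$ only negligibly. Sending $n\to\infty$ and then $\varepsilon\to0$ yields the scalar characteristic-function limit, and the Cramér--Wold device upgrades it to $\sum_{i=1}^n W_{n,i} \leadsto \mathcal{N}(0, V^*)$.
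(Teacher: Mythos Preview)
The paper does not prove this theorem; it is quoted verbatim from Hall and Heyde and used as a black box in the proof of Theorem~\ref{th:convergence_weak}. You correctly note this at the outset, so there is no ``paper's own proof'' to compare against.

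Your sketch is the standard McLeish route (Cram\'er--Wold reduction, complex product martingale $T_n=\prod_i(1+itX_{n,i})$, Taylor control of the log-remainder) and is essentially how the cited result is established. One point is under-specified: the uniform integrability of $T_n$. Truncating the increments at level $\varepsilon$ makes $\max_i|X_{n,i}|$ small but does \emph{not} by itself give an $L^2$ bound on $T_n$, because the iterated-conditioning estimate
\[
\mathbb E\bigl[|T_n|^2\bigr]\;\le\;\mathbb E\Bigl[\exp\Bigl(t^2\sum_i \mathbb E[X_{n,i}^2\mid\mathcal F_{i-1}]\Bigr)\Bigr]
\]
requires the predictable quadratic variation to be bounded almost surely, whereas \eqref{hh2} only gives convergence in probability. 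The standard fix is to introduce, in addition to the amplitude truncation, the stopping time $\tau_n=\inf\{k:\sum_{i\le k}\mathbb E[X_{n,i}^2\mid\mathcal F_{i-1}]>K\}$ for some $K>\sigma^2$ and work with the stopped array; the stopped version has $\sum_i \mathbb E[X_{n,i}^2\mid\mathcal F_{i-1}]\le K+\varepsilon^2$ a.s., so the $L^2$ bound goes through, and $\mathbb P(\tau_n<n)\to 0$ by \eqref{hh2} makes the stopping asymptotically irrelevant. With that addition your argument is complete.
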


We start by verifying \eqref{hh2}. Let $ D_{k} =  \sqrt{H} C_{k-1} \Gamma_{k-1} C_{k-1}^{T} \sqrt{H} \1 _{ \mathcal A_{k-1}   } \in \mathcal{S}_d(\rset)$. The quadratic variation of $M_n$ is given by 
\begin{align*}
\Sigma_n = \gamma_n^{-1} \sum\limits_{k=1}^{n} \gamma_{k}^2  \Pi_{n,k} D_{k} \Pi_{n,k}^\top  .
\end{align*}
First we can check that $\Sigma_n $ is bounded. Using the triangle inequality and since the operator norm is submultiplicative, we have
\begin{align*}
\|\Sigma_n\|
\leq \gamma_n^{-1}  \sum\limits_{k=1}^{n} \gamma_{k}^2 \|\Pi_{n,k}^{}  D_{k} \Pi_{n,k}^{T} \| \leq    \gamma_n^{-1}  \sum\limits_{k=1}^{n} \gamma_{k}^2 \|D_k\| \|\Pi_{n,k}^{}\|^2 =   \gamma_n^{-1}  \sum\limits_{k=1}^{n} \gamma_{k}^2 \|D_k\| \rho(\Pi_{n,k}^{})^2,
\end{align*}
where we use in the last equality that $\Pi_{n,k}$ is real symmetric so $\|\Pi_{n,k}\|=\rho(\Pi_{n,k})$. On the event $\mathcal A_{k-1}$, the matrices $C_{k-1}$ and $\Gamma_{k-1}$ are bounded as $\|C_{k-1}\| \leq 2 \|C\|$ and $\|\Gamma_{k-1}\| \leq 2 \|\Gamma\|$ leading to the following bound for the matrix $D_k$,
\begin{align*}
\|D_k\| &= \| \sqrt{H} C_{k-1} \Gamma_{k-1} C_{k-1}^{T} \sqrt{H} \1 _{ \mathcal A_{k-1}   }\| \\
&\leq \|H\| \|\Gamma_{k-1}\| \| C_{k-1} \|^2 \1 _{ \mathcal A_{k-1}   } \\
&\leq 8 \|H\| \|\Gamma\| \| C \|^2 = U_D.
\end{align*}
It follows that
\begin{align*}
\|\Sigma_n\|
\leq  U_D  \gamma_n^{-1}  \sum\limits_{k=1}^{n} \gamma_{k}^2 \rho(\Pi_{n,k}^{})^2.
\end{align*}

In view of Lemma \ref{lemma:norm_prod}, we shall split the summation from $k=1,\ldots,j$ and $k=j+1,\ldots,n$ as
\begin{align*}
\gamma_n^{-1} \sum\limits_{k=1}^{n} \gamma_{k}^2 \rho\left(\Pi_{n,k}^{}\right)^2
&= \gamma_n^{-1} \sum\limits_{k=1}^{j} \gamma_{k}^2 \rho\left(\Pi_{n,k}^{}\right)^2 + \gamma_n^{-1} \sum\limits_{k=j+1}^{n} \gamma_{k}^2 \rho\left(\Pi_{n,k}^{}\right)^2 \\
&\leq  \underbrace{\gamma_n^{-1} \sum\limits_{k=1}^{j} \gamma_{k}^2 \rho\left(\Pi_{n,k}^{}\right)^2}_{a_n} + \underbrace{\gamma_n^{-1} \sum\limits_{k=j+1}^{n} \gamma_{k}^2 \prod_{i=k+1}^n (1-\lambda_m \gamma_i)^2}_{b_n}.
\end{align*}
For the first term $a_n$, we have for all $k=1,\ldots,j$ 
\begin{align*}
\rho(\Pi_{n,k}) \leq \rho(\Pi_{n,j}) \leq \prod_{i=j+1}^n (1-\lambda_m \gamma_i) \leq \exp(-\lambda_m(\tau_n-\tau_j)),
\end{align*}
which implies since $(\gamma_k)$ is decreasing with $\gamma_1 = \alpha$ that
\begin{align*}
\sum\limits_{k=1}^{j} \gamma_{k}^2 \rho\left(\Pi_{n,k}^{}\right)^2 \leq \alpha \tau_j \exp(-2 \lambda_m(\tau_n-\tau_j)).
\end{align*}
Therefore, similarly to the initial error term, we get
\begin{align*}
a_n \leq \alpha \tau_j \exp(2 \lambda_m \tau_j)) \exp(d_n) \quad \text{ with } \quad d_n = -2\lambda_m \tau_n -\log(\gamma_{n}),
\end{align*}
and the condition $2\alpha \lambda_m -1 > 0$ ensures $d_n \to -\infty$ so that $a_n$ goes to $0$ and is almost surely bounded by $U_a$.

For the second term $b_n$, we can apply Lemma \ref{lemma:lim_sum_key} and need to distinguish between the two cases:

\noindent
\textbullet \ ($\beta=1$) If $\gamma_n = \alpha/n$, since $2 \alpha \lambda_m>1$, we can apply Lemma \ref{lemma:lim_sum_key} ($p=1, m=2, \lambda = \lambda_m \alpha, x_j= 0, \varepsilon_k = \alpha^2$) and obtain
\begin{align*}
b_n \leq  \frac{\alpha^2} {2 \alpha \lambda_m-1} = U_b.
\end{align*}
\textbullet \ ($\beta<1$) If $\gamma_n = \gamma/n^{\beta}$, we deduce the same as before because $ \lambda_m> 0$.

Finally in both cases, we get
\begin{align}\label{bound:sigman}
\|\Sigma_n\| \leq   U_D \left(U_a + U_b \right).
\end{align}

We now derive the limit of $ \Sigma_n $. We shall use a recursion equation to recover a stochastic approximation scheme. Note that
\begin{align}\label{eq:bound_Sigma_n}
\gamma_n  \Sigma_n &= \sum\limits_{k=1}^{n} \gamma_{k}^2 \Pi_{n,k}^{} D_k \Pi_{n,k}^{T} \\
 &= \gamma_{n}^2 D_n + A_{n} \left(\sum\limits_{k=1}^{n-1} \gamma_{k}^2 \Pi_{n-1,k}^{} D_k  \Pi_{n-1,k}^{T}\right) A_{n}^\top,
\end{align}
and recognize
\begin{align*}
\gamma_n \Sigma_n  = \gamma_{n}^2 D_n  + \gamma_{n-1} A_{n}  \Sigma_{n-1}  A_{n}^\top.
\end{align*}
Replacing the symmetric matrix $A_{n}=I-\gamma_{n}\widetilde K$, we get (because $\Sigma_n$ is bounded almost surely)
\begin{align*}
\gamma_n \Sigma_n    
&= \gamma_n^2 D_n + \gamma_{n-1} (I-\gamma_{n}\widetilde K) \Sigma_{n-1}  (I-\gamma_{n} \widetilde K) \\
&= \gamma_n^2 D_n + \gamma_{n-1} \left[\Sigma_{n-1}  - \gamma_{n} \Sigma_{n-1} \widetilde K - \gamma_{n} \widetilde K \Sigma_{n-1} + O(\gamma_{n}^2)\right].
\end{align*}
Divide by $\gamma_n$ to obtain 
\begin{align*}
\Sigma_{n}
=  \gamma_n D_n + \frac{\gamma_{n-1}}{\gamma_n}\left[\Sigma_{n-1}  - \gamma_{n}(\widetilde K \Sigma_{n-1}   + \Sigma_{n-1}  \widetilde K) + O(\gamma_{n}^2)\right],
\end{align*}
and we recognize a stochastic approximation scheme
\begin{align*}
\Sigma_{n}  =  \Sigma_{n-1}   - \gamma_{n}\left[\widetilde K \Sigma_{n-1} + \Sigma_{n-1}  \widetilde K - D_n \right] + \frac{\gamma_{n-1}-\gamma_{n}}{\gamma_n} \Sigma_{n-1}  + O(\gamma_{n-1}\gamma_{n}+ |\gamma_{n-1}-\gamma_{n}|)
\end{align*}
Recall that when $\beta<1$ we have
\begin{align*}
\frac{1}{\gamma_n} - \frac{1}{\gamma_{n-1}} \rightarrow 0, \text{ i.e., } \frac{\gamma_{n-1}- \gamma_{n}}{\gamma_{n}} = o(\gamma_n).
\end{align*}       
\textbullet \ ($\beta=1$) If $\gamma_n = \alpha/n$ we get
\begin{align*}
\Sigma_{n}  &=  \Sigma_{n-1}  - \frac{\alpha}{n}\left[\widetilde K \Sigma_{n-1}  + \Sigma_{n-1} \widetilde  K - \frac{1}{\alpha}\Sigma_{n-1} - D_n \right] + O(n^{-2})\\
\Sigma_{n}  &=  \Sigma_{n-1}  - \frac{\alpha}{n}\left[\left(\widetilde K-\frac{I}{2\alpha}\right) \Sigma_{n-1} + \Sigma_{n-1} \left(\widetilde K-\frac{I}{2\alpha}\right) - D_n \right] + O(n^{-2}).
\end{align*}
\textbullet \ ($\beta<1$) If $\gamma_n = \alpha/n^{\beta}$ we get
\begin{align*}
\Sigma_{n}  =  \Sigma_{n-1} - \gamma_{n}\left[\widetilde K \Sigma_{n-1}  + \Sigma_{n-1}  \widetilde K - D_n \right] + o(\gamma_{n}).
\end{align*}
Recall that $\zeta= \1_{\{\beta=1\}}/(2\alpha)$ and define $\widetilde K_{\zeta}= \widetilde K - \zeta I$, so that in both cases, the recursion equation becomes
\begin{align*}
\Sigma_{n} =  \Sigma_{n-1} - \gamma_{n}\left[\widetilde K_{\zeta}\Sigma_{n-1}  + \Sigma_{n-1}  \widetilde K_{\zeta}^\top- D_n \right] + o(\gamma_{n}).
\end{align*}
We can vectorize this equation. The vectorization of an $m \times n$ matrix $A=(a_{i,j})$, denoted $\mathrm{vec}(A)$, is the $mn \times 1$ column vector obtained by stacking the columns of the matrix A on top of one another:
\begin{align*}
\mathrm{vec} (A)=[a_{1,1},\ldots ,a_{m,1},a_{1,2},\ldots ,a_{m,2},\ldots ,a_{1,n},\ldots ,a_{m,n}]^{T}.
\end{align*}
Applying this operator to our stochastic approximation scheme gives
\begin{align*}
\mathrm{vec}(\Sigma_{n} ) = \mathrm{vec}(\Sigma_{n-1} ) - \gamma_n \left[\mathrm{vec}\left(\widetilde K_{\zeta}\Sigma_{n-1} + \Sigma_{n-1}  \widetilde K_{\zeta}^\top\right) - \mathrm{vec}(D_n) \right] + o(\gamma_{n}).
\end{align*}
Denote by $\otimes$ the Kronecker product, we have the following property
\begin{align*}
\mathrm{vec}\left(K_{\zeta}\Sigma_{n-1} + \Sigma_{n-1}  K_{\zeta}^\top\right) = \left(I_d \otimes K_{\zeta} + K_{\zeta}^\top \otimes I_d \right) \mathrm{vec}(\Sigma_{n-1} ).
\end{align*}
Define $D$ as the almost sure limit of $D_n$, \textit{i.e.}
\begin{align*}
D = \lim_{n \to \infty} D_n = \sqrt{H} C \Gamma C \sqrt{H}.
\end{align*}

Introduce $v_n = \mathrm{vec}(\Sigma_{n} )$ and $Q = \left(I_d \otimes \widetilde K_{\zeta} + \widetilde K_{\zeta} \otimes I_d \right)$. We have almost surely
\begin{align*}
v_n &= v_{n-1}  - \gamma_n \left(Q  v_{n-1}  - \mathrm{vec}(D) \right) + \gamma_n \mathrm{vec}( D_n - D)   + o(\gamma_{n})\\
&= v_{n-1}  - \gamma_n \left(Q  v_{n-1}  - \mathrm{vec}(D) \right) +  \varepsilon_n \gamma_{n}
\end{align*}
where $ \varepsilon_n \to 0$ almost surely.  This is a stochastic approximation scheme with the affine function $h(v)= Qv-\mathrm{vec}(D)  $ for $v \in \rset^{d^2}$. Let $v^\star $ be the solution of $h(v)=0$ which is well defined since $Q = \left(I_d \otimes \widetilde K_{\zeta} + \widetilde K_{\zeta}^\top \otimes I_d \right)$ is invertible. Indeed, the eigenvalues of $Q$ are $\mu_i + \mu_j $, $1\leq i,j\leq d$, where the $\mu_i$, $i=1,\ldots, d$ are the eigenvalues of $\widetilde K_{\zeta}$.  Equivalently, the eigenvalues of $Q$ are of the form $(\lambda_i- \zeta)   + (\lambda_j - \zeta)$ where the $\lambda_i$, $i=1,\ldots, d$ are the eigenvalues of $\widetilde K$. Because $ \lambda_m  > \zeta$, we have that $Q \succ 0$. As a consequence
\begin{align*}
\left(v_n - v^\star\right) 
&= \left(v_{n-1} - v^\star\right) - \gamma_n\left(h(v_{n-1})-h(v^\star) \right) +  \varepsilon_n \gamma_{n}\\
&= \left(v_{n-1} - v^\star\right) -\gamma_n Q \left(v_{n-1} - v^\star\right) +    \varepsilon_n\gamma_{n}\\
&= B_n \left(v_{n-1} - v^\star\right)+   \varepsilon_n \gamma_{n} ,
\end{align*}
with $B_n = (I_{d^2}-\gamma_n Q)$. By induction, we obtain
\begin{align*}
\left(v_n - v^\star\right) = \left(B_n \ldots B_1\right) \left(v_{0} - v^\star\right)+ \sum_{k=1}^n \gamma_{k} \left(B_n \ldots B_{k+1}\right) \varepsilon_k,
\end{align*}
Define $\lambda_Q = \lambda_{\min}(Q) >0$ and remark that 
$$ \|B_n \ldots B_{k+1}\| \leq \prod_{j=k+1}^n \|B_j\| =  \prod_{j=k+1}^n (1-\gamma_j \lambda_{Q}).$$ It follows that
\begin{align*}
\|v_n - v^\star\|_2 
&\leq \|B_n \ldots B_1\| \|v_{0} - v^\star\|_2+ \sum_{k=1}^n \gamma_{k} \|B_n \ldots B_{k+1}\| \|\varepsilon_k\|_2 \\
&\leq   \prod_{j=1}^n (1-\gamma_j \lambda_{Q}) \|v_{0} - v^\star\|_2+  \sum_{k=1}^n \gamma_{k}   \prod_{j=k+1}^n (1-\gamma_j \lambda_{Q}) \|\varepsilon_k\|_2
\end{align*}
Applying Lemma \ref{lemma:lim_sum_key} we obtain that the right-hand side term goes to $0$. The left-hand side term goes to $0$ under the effect of the product by definition of $(\gamma_k)_{k \geq 1}$. We therefore conclude that $v_n  \to  v^\star$ almost surely. From easy manipulation involving $\mathrm{vec} (\cdot)$ and $\otimes$, this is equivalent to $\Sigma_n\to \Sigma$, where $\Sigma$ is the solution of the  Lyapunov equation 
\begin{align*}
(\widetilde K-\zeta I)\Sigma + \Sigma(\widetilde K-\zeta I)=D.
\end{align*}


Now we turn our attention to \eqref{hh3}. We need to show that almost surely,
\begin{align*}
 \gamma_n^{-1} \sum\limits_{k=1}^{n} \gamma_{k}^2  \mathbb E [ \| \Pi_{n,k}  \sqrt{H} C_{k-1} w_k \|_2^2  \1_ {\{    \gamma_{k} \| \Pi_{n,k}  \sqrt{H}C_{k-1}  w_k \|_2  >\varepsilon \gamma_n^{1/2}      \}}\mid \mathcal F_{k-1} ] \1_{\mathcal A_{k-1} }  \to 0.
\end{align*}
We have
\begin{align*}
   &\mathbb E [   \gamma_n^{-1} \gamma_{k}^2 \| \Pi_{n,k}  \sqrt{H} C_{k-1}  w_k \|_2^2  \1_ {\{    \gamma_{k} \| \Pi_{n,k} \sqrt{H} C_{k-1} w_k \|_2  >\varepsilon \gamma_n^{1/2}      \}}\mid \mathcal F_{k-1} ]\\
   &\leq \varepsilon^{-\delta} \mathbb E [   (\gamma_n^{-1/2} \gamma_{k} \| \Pi_{n,k} \sqrt{H} C_{k-1} w_k \|_2 )^{2+\delta} \mid \mathcal F_{k-1} ]\\
   &\leq  \varepsilon^{-\delta} (\gamma_n^{-1/2} \gamma_{k} \| \Pi_{n,k} \sqrt{H}  C_{k-1} \|^{2+\delta} \mathbb E [ \|  w_k \|_2 ^{2+\delta} \mid \mathcal F_{k-1} ].
      \end{align*}
Let $U(\omega) =  \sup_{k\geq 1} \mathbb E [    \|   w_k \|_2 ^{2+\delta} \mid \mathcal F_{k-1} ]   \1_{\mathcal A_{k-1} }   $ which is almost surely finite by Assumption \ref{ass:lyapunov_bound}. We get
\begin{align*}
&\mathbb E [   \gamma_n^{-1} \gamma_{k}^2 \| \Pi_{n,k} \sqrt{H} C_{k-1} w_k \|_2^2  \1_ {\{    \gamma_{k} \| \Pi_{n,k} \sqrt{H} C_{k-1} w_k \|_2  >\varepsilon \gamma_n^{1/2}      \}}\mid \mathcal F_{k-1} ]\1_{\mathcal A_{k-1} } \\
 &\leq  \varepsilon^{-\delta} \left( 2\|\sqrt{H}\| \|C \|\right) ^{2 + \delta} U(\omega) (\gamma_n^{-1/2} \gamma_{k} \rho( \Pi_{n,k}) ) ^{2+\delta}    
\end{align*}

Hence by showing that
\begin{align*}
 \sum\limits_{k=1}^{n} (\gamma_n^{-1/2} \gamma_{k} \rho( \Pi_{n,k}) ) ^{2+\delta}  \to 0 ,
\end{align*}
we will obtain \eqref{hh3}. The previous convergence can be deduced from Lemma \ref{lemma:lim_sum_key} with $p =1 + \delta/2 $, $m = 2+\delta$, $\epsilon_k = \gamma_k^{\delta/2}$, checking that $(2+\delta) \alpha\lambda_m > 1+\delta / 2$.
\\

\noindent \textbf{Step 2. Proof of Equation \eqref{eq:petito}}. 

\noindent A preliminary step to the derivation of Equation \eqref{eq:petito} is to obtain that $\widetilde \Delta_k \to 0$ almost surely. For any $\theta$ and $\eta$ in $\mathbb R^d$, we have
\begin{align*}
\|\theta\| ^2  = \|\eta\|^2  +  2 \eta ^\top( \theta- \eta ) +  \|\theta-\eta\|^2
\end{align*}
implying that for all $k\geq 0$
\begin{align*}
\expect [ \|\Theta_{k+1}  \| ^2 | \mcf_k]  =   \| \widetilde \Theta_{k}   \|^2  - 2\gamma_{k+1} \Theta_{k}^\top \widetilde K \Theta_{k}   + \gamma_{k+1}^2 \expect[ \|\widetilde K \Theta_{k}   - C_{k}  w_{k+1}\1_{\mathcal A_k}  \|^2| \mcf_k].
\end{align*}
Since $(w_k)$ is a martingale increment and because on $\mathcal A_k$, $ \rho(C_k)\leq 2  \rho(C)$, we get
\begin{align*}
\expect [ \| \widetilde K \Theta_{k}   - C_{k}  w_{k+1}\1_{\mathcal A_k}  \|^2| \mcf_k] 
&= \expect [ \| \widetilde K \Theta_{k} \|^2| \mcf_k] + \expect [ \|C_{k}  w_{k+1}\1_{\mathcal A_k}  \|^2| \mcf_k] \\
&\leq \lambda_M^2 \|\Theta_{k} \|^2 + \rho(C_k)^2 \expect [ \| w_{k+1}\1_{\mathcal A_k}  \|^2| \mcf_k]\\
&\leq \lambda_M^2 \|\Theta_{k} \|^2 + 4\rho(C)^2 \expect [ \| w_{k+1}\1_{\mathcal A_k}  \|^2| \mcf_k],
\end{align*}
Injecting this bound in the previous equality yields
\begin{align*}
\expect [ \|\Theta_{k+1}   \| ^2 | \mcf_k]
\leq  \| \Theta_{k}    \|^2 ( 1 + \gamma_{k+1}^2\lambda_M^2 )  - 2\gamma_{k+1} \Theta_{k} ^\top\widetilde K \Theta_{k}   + 4 \rho(C) ^2 \gamma_{k+1}  ^2  \expect[ \|  w_{k+1} \|^2| \mcf_k]  \1_{\mathcal A_k}  .
\end{align*}
Since, using \eqref{cond:lyapunov_bound},
\begin{align*}
&\sum_{k\geq 0}   \gamma_{k+1} ^2 \expect [ \|  w_{k+1} \|^2| \mcf_k]  \1_{\mathcal A_k} \leq \left(\sup_{k\geq 0}  \expect [ \|  w_{k+1} \|^2| \mcf_k]  \1_{\mathcal A_k} \right)\left( \sum_{k\geq 0}   \gamma_{k+1}  ^2\right) <\infty,
 \end{align*}
we are in position to apply the Robbins-Siegmund Theorem \ref{th:rs} and we obtain the almost sure convergence of $\sum_k \gamma_{k+1} \Theta_{k} ^\top \widetilde K \Theta_{k}$ and $\| \Theta_{k} \|^2_2 \to V_\infty$. Because $\widetilde K$ is positive definite, it gives that, with probability $1$, $\sum_{k\geq 0 } \gamma_{k+1}   \| \Theta_{k} \| ^2 <+\infty$, from which, we deduce $\liminf_k  \|\Theta_{k} \|^2 = 0$. Therefore one can extract a subsequence $\Theta_{k} $ such that $ \|\Theta_{k}  \| ^2 \to 0$. Using the above second condition yields $ V_\infty = 0$ and we conclude that $\widetilde \Delta_k = H^{-1/2} \Theta_k \to 0$. \\

Define the difference
\begin{align*}
E_k &=  \Delta_k - \widetilde{\Delta}_k .
\end{align*}
Since $\theta \mapsto \nabla^2 F(\theta)$ is continous at $\theta^\star$, we can apply a coordinate-wise mean value theorem. Indeed, for any $\theta \in \rset^d$, we have $\nabla F(\theta) = (\partial_1 F(\theta),\ldots,\partial_d F(\theta))$ where for all $j=1,\ldots,d$, the partial derivatives functions $\partial_j F:\rset^d \to \rset$ are Lipschitz continuous. Denote by $\nabla(\partial_j F):\rset^d \to \rset^d$ the gradient of the partial derivative $\partial_j F$, \textit{i.e.}, $\nabla(\partial_j F)(\theta) = (\partial_{1,j}^2 F(\theta),\ldots,\partial_{d,j}^2 F(\theta)) $. For any $\theta,\eta \in B(\theta^\star,\varepsilon)$, there exists $\xi_j \in \rset^d$ such that
\begin{align*}
\partial_j F(\theta) - \partial_j F(\eta) = \nabla(\partial_j F)(\xi_j)(\theta-\eta).
\end{align*}
We construct a Hessian matrix by rows $H(\xi) = H(\xi_1,\ldots,\xi_d)$ where the $j$-th row is equal to $\nabla(\partial_j F)(\xi_j)= (\partial_{1,j}^2 F(\xi_j),\ldots,\partial_{d,j}^2 F(\xi_j))$
\begin{align*}
H(\xi) = \begin{bmatrix} 
    \partial_{1,1}^2 F(\xi_1) & \dots & \partial_{1,d}^2 F(\xi_1) \\
    \vdots & \ddots & \vdots \\
    \partial_{d,1}^2 F(\xi_d) & \dots & \partial_{d,d}^2 F(\xi_d)
    \end{bmatrix}
\end{align*}
and we can write
\begin{align*}
\nabla F(\theta) - \nabla F(\eta) = H(\xi)(\theta-\eta).
\end{align*}
There exists $\xi_k = (\xi_k^{(1)},\ldots,\xi_k^{(d)})$ with $\xi_k^{(j)} \in [\theta^\star + E_{k}, \theta_{k}]$ and $\xi_k' = (\xi_k'^{(1)},\ldots,\xi_k'^{(d)})$ with $\xi_k'^{(j)} \in [\theta^\star + E_{k}, \theta^\star]$ such that
\begin{align}\label{eq:bound_1}
 \nabla F(\theta^\star + E_{k} )  - \nabla F(  \theta_ {k}  )  &= - H( \xi_k )  \widetilde \Delta_{k} \\
 \label{eq:bound_2}
\nabla F(\theta^\star + E_{k} )     &= H( \xi_k' )   E_{k}.
\end{align}
Let $\eta >0$ such that $2\alpha \lambda_m (1-3\eta) > 1$. This choice will come clear at the end of the reasoning. On the one hand, we have $C_k \to C$. On the other hand, using Lemma \ref{lemma:prod_eig}, the spectrum of $C_k H$ is real and positive. Hence, we have the convergence of the eigenvalues of $C_kH$ towards the eigenvalues of $K=CH$. This follows from the definition of eigenvalues as roots of the characteristic polynomial and the fact that the roots of any polynomial $P \in \mathbb{C}[X]$ are continuous functions of the coefficients \citep{zedek1965continuity}. Consequently, there exists $n_1(\omega)$ such that for all $k \geq n_1(\omega)$,  
\begin{align}\label{eq:bound_3}
(1- \eta) \lambda_m  \leq \lambda_{\min} ( C_k H ) \leq  \lambda_{\max} (C_k H  ) \leq (1+ \eta)\lambda_M. 
\end{align}
We can define $n_2(\omega)$ such that for all $k \geq n_2(\omega)$
\begin{align}\label{eq:bound_4}
\mathcal A_k \text{ is realized.} 
\end{align}
Since $\|\sqrt{H^{-1}}H(\xi_k')\sqrt{H^{-1}}-I_d \| \to 0 $ as $k \to \infty$, there is $n_3(\omega)$ and $n_4(\omega)$ such that for all $k \geq n_3(\omega)$ 
\begin{align} \label{eq:bound_5}
\|\sqrt{H^{-1}}H(\xi_k')\sqrt{H^{-1}}-I_d \|  \leq \frac{\eta}{1+\eta}  \frac{\lambda_m}{\lambda_M},
\end{align}
and for all $k \geq n_4(\omega)$,
\begin{align} \label{eq:bound_7}
\|\sqrt{H^{-1}} H(\xi_k')\sqrt{H^{-1}}\| \leq 1.
\end{align}
Since $\gamma_k \to 0$, there is $n_5$ such that for all $k \geq n_5$
\begin{align}\label{eq:bound_6}
\gamma_{k+1}  \leq  \frac{2  \eta \lambda_m }{ (1+\eta)^2 \lambda_M^2} .
\end{align}

To use the previous local properties, define $n_0(\omega) = n_1(\omega)\vee n_2(\omega)\vee n_3(\omega)\vee n_4(\omega) \vee n_5$ and introduce the set $\mathcal E_j$ along with its complement $\mathcal E_j^c$, defined by
\begin{align*}
&\mathcal E_j  =  \{ \omega\, :\, j \geq   n_0(\omega) \} .
\end{align*}
Let $\delta>0$ and take $j\geq 1$ large enough such that $\mathbb P ( \mathcal E_j ^c )  \leq \delta$. Invoking the Markov inequality, we have for all $a>0$
\begin{align*}
\mathbb P ( \gamma_ k ^{ - 1/2} \|E_{  k} \|   >a )
&= \mathbb P (  \gamma_ k ^{ - 1/2} \|E_{ k}  \|   >a,  \, \mathcal E_j    ) + \mathbb P (  \gamma_ k ^{ - 1/2} \|E_{ k}  \|   >a,  \, \mathcal E_j^c    ) \\
&\leq \mathbb P (  \gamma_ k ^{ - 1/2} \|E_{ k}  \|   >a,  \, \mathcal E_j    ) + \delta \\
&\leq   \gamma_ k ^{ - 1/2} a^{-1}  \mathbb E  [  \|E_{ k}    \|  \1_{   \mathcal E_j }    ] +  \delta
\end{align*}
Because $\delta$ is arbitrary, we only need to show that for any value of $j\geq 1$,
\begin{align*}
e_ k := \mathbb E  [ \|E_k\|    \1 _{\mathcal E_j  }  ]  =  o(\gamma_ k ^{ 1/2} ) .
\end{align*}
To prove this fact, we shall recognize a stochastic algorithm for the sequence $e_k$. 

Let $k\geq j$ and assume further that $\mathcal E_j$ is realized. We have, because of \eqref{eq:bound_4},
\begin{align*}
E_{k+1}  
&=  \Delta_{k} -   \widetilde{ \Delta}_{k}  - \gamma_{k+1} C_{k} \nabla F(\theta_ {k})  + \gamma_{k+1} K \widetilde{ \Delta}_{k} .
\end{align*}
Introducing $\widetilde E_k = \sqrt H E_k$, we find
\begin{align*}
\widetilde E_{k+1} = \widetilde E_k - \gamma_{k+1} \sqrt{H} C_{k} \nabla F(\theta_ {k})  + \gamma_{k+1} \sqrt{H} K \widetilde{ \Delta}_{k}  ,
\end{align*}
and using \eqref{eq:bound_1}, it comes that
\begin{align*}
\widetilde E_{k+1}  
& = \widetilde E_{k} - \gamma_{k+1} \sqrt{H} C_{k} \nabla F(\theta^\star+ E_{k} )  -  \gamma_{k+1}\sqrt{H} C_{k} H(\xi_k)\widetilde{ \Delta}_{k}   + \gamma_{k+1} \sqrt{H} K \widetilde{ \Delta}_{k}\\
& =\widetilde E_{k} - \gamma_{k+1} \sqrt{H} C_{k} \nabla F(\theta^\star+ E_{k} )  +   \gamma_{k+1} \sqrt{H} (  K - C_{k} H(\xi_k) ) \widetilde{ \Delta}_{k}.
\end{align*}
Using Minkowski inequality, we have
\begin{align*}
\|\widetilde E_{k+1}\|
\leq  \|\widetilde E_{k} - \gamma_{k+1} \sqrt{H} C_{k} \nabla F(\theta^\star+ E_{k} )\|  +   \|\gamma_{k+1} \sqrt{H} (  K - C_{k}H(\xi_k)  ) \widetilde{ \Delta}_{k}\|.
\end{align*}
We shall now focus on the first term. Still on the set $\mathcal E_j  $, we have
\begin{align}
\nonumber &\|\widetilde E_{k} - \gamma_{k+1}\sqrt{H} C_{k} \nabla F(\theta^\star+ E_{k} )    \|^2 \\
\label{ineq:0} &=  \| \widetilde E_{k}\|^2  - 2 \gamma_{k+1} \langle \widetilde E_{k} , \sqrt{H} C_{k} \nabla F(\theta^\star+ E_{k} )  \rangle +  \gamma_{k+1} ^2 \|\sqrt{H} C_{k} \nabla F(\theta^\star+ E_{k} ) \|^2
\end{align}
We have on the one hand using \eqref{eq:bound_2} 
\begin{align*}
 \langle \widetilde E_{k} , \sqrt{H} C_{k} \nabla F(\theta^\star+ E_{k} )  \rangle 
 &=  \langle \widetilde E_{k} , \sqrt{H} C_{k} H(\xi_k')E_{k}   \rangle \\
  &=  \langle \widetilde E_{k} , \sqrt{H} C_{k} H E_{k}   \rangle +  \langle \widetilde E_{k} , \sqrt{H} C_{k}(H(\xi_k')-H) E_{k}   \rangle\ \end{align*}
Due to \eqref{eq:bound_3}, the first term satisfies
 \begin{align*}
\langle \widetilde E_{k} , \sqrt{H} C_{k} H E_{k}   \rangle 
&= \langle \widetilde E_{k} , \sqrt{H} C_{k} \sqrt{H}  \widetilde E_{k}   \rangle  \\
& \geq \lambda_{\min} (C_{k} H)   \| \widetilde   E_{k} \| ^2\\
&\geq (1-\eta) \lambda_m \| \widetilde   E_{k} \| ^2
 \end{align*}
The second term satisfies
\begin{align*}
\langle \widetilde E_{k} , \sqrt{H} C_{k}(H(\xi_k')-H) E_{k}  \rangle
&= \langle \widetilde E_{k} , \sqrt{H} C_{k} \sqrt{H} (\sqrt{H^{-1}}H(\xi_k')\sqrt{H^{-1}}-I_d) \widetilde E_k  \rangle \\
&\geq - \left| \langle \widetilde E_{k} , \sqrt{H} C_{k}\sqrt{H}(\sqrt{H^{-1}} H(\xi_k')\sqrt{H^{-1}}-I_d) \widetilde E_{k}  \rangle \right|
\end{align*} 
Using Cauchy-Schwarz inequality, the submultiplicativity of the norm, \eqref{eq:bound_3} and \eqref{eq:bound_5}, we have
\begin{align*}
&\left| \langle \widetilde E_{k} , \sqrt{H} C_{k}\sqrt{H}(\sqrt{H^{-1}}H(\xi_k')\sqrt{H^{-1}}-I_d) \widetilde E_{k}  \rangle \right|\\
&\leq \|  \sqrt{H} C_{k}\sqrt{H}\| \|\sqrt{H^{-1}}H(\xi_k')\sqrt{H^{-1}}-I_d \| \| \widetilde E_k\|^2\\
&\leq  \eta \lambda_m  \| \widetilde E_k\|^2.
\end{align*}
 Finally, it follows that
 \begin{align}
\label{ineq:1} \langle \widetilde E_{k} , \sqrt{H} C_{k} \nabla F(\theta^\star+ E_{k} )  \rangle 
 \geq (1-2\eta) \lambda_m \| \widetilde   E_{k} \| ^2
 \end{align} 
On the other hand using \eqref{eq:bound_2}, \eqref{eq:bound_3} and \eqref{eq:bound_7},
\begin{align}
\nonumber\|\sqrt{H} C_{k} \nabla F(\theta^\star+ E_{k} )    \|^2 
&=  \| \sqrt{H} C_{k} H(\xi_k') E_{k} \|^2 \\
\nonumber&= \| \sqrt{H} C_{k}\sqrt{H} (\sqrt{H^{-1}} H(\xi_k')\sqrt{H^{-1}}) \widetilde E_{k} \|^2 \\
\nonumber&\leq \lambda_{\max}(C_k H)^2 \|\sqrt{H^{-1}} H(\xi_k')\sqrt{H^{-1}}\|^2 \| \widetilde E_{k} \|^2 \\
\label{ineq:2}&\leq  (1+\eta)^2 \lambda_M^2 \|\widetilde E_{k} \| ^2 
\end{align}
Putting together  \eqref{ineq:0}, \eqref{ineq:1}, \eqref{ineq:2} and using \eqref{eq:bound_6} gives that, on $\mathcal E_j$,
\begin{align*}
&\|\widetilde E_{k} - \gamma_{k+1}\sqrt{H} C_{k} \nabla F(\theta^\star+ E_{k} )    \|^2 \\
& \leq  \|\widetilde E_{k}\|^2(1  -  2 \gamma_{k+1}(1-2\eta) \lambda_m     + \gamma_{k+1}^2  (1+\eta)^2 \lambda_M^2) \\
&\leq  \|\widetilde E_{k}\|^2(1  -  2 \gamma_{k+1}(1-3\eta) \lambda_m).
\end{align*}
By the Minkowski inequality and the fact that $(1- x)^{1/2} \leq 1 - x/2 $, on $\mathcal E_j $, it holds
\begin{align*}
\| \widetilde  E_{k+1} \| 
&\leq  \| \widetilde E_{k}\| ( 1  -  2 \gamma_{k+1} (1-3\eta) \lambda_m  )^{1/2} +    \gamma_{k+1} \|\sqrt{H}   (K - C_{k} H(\xi_k) ) \widetilde{ \Delta}_{k}\| \\
 & \leq \|\widetilde  E_{k}\| ( 1  -  \gamma_{k+1} (1-3\eta) \lambda_m)  +    \gamma_{k+1} \|\sqrt{H}\| \|   (K - C_{k} H(\xi_k) )  \widetilde{ \Delta}_{k}\| 
 \end{align*}
Hence, we have shown that for any $k\geq j$,
\begin{align*}
&\| \widetilde E_k \|  \1 _{\mathcal E_j  } \leq \|\widetilde  E_{k}\| \1 _{\mathcal E_j  } ( 1  -  \gamma_{k+1}     (1-3\eta) \lambda_m )  +    \gamma_{k+1} \|\sqrt{H}\| \|   (K - C_{k} H(\xi_k) )
\1 _{\mathcal E_j  } \widetilde{ \Delta}_{k}\| .
\end{align*}
It follows that, for any $k\geq j$,
\begin{align*}
e_ {k+1} &\leq  e_{k} ( 1  -  \gamma_{k+1}(1-3\eta) \lambda_m )  + \gamma_{k+1} \|\sqrt{H}\| \mathbb E [ \|U_{k} \widetilde{ \Delta}_{k}  \| ] ,
\end{align*}
with $U_{k} =(K - C_{k} H(\xi_k)  )  \1 _{\mathcal E_j  } $. 
Because with probability $1$, $\|U_{k}\|$ is bounded, we can apply the Lebesgue dominated convergence theorem to obtain that $\varepsilon _{k} =  \mathbb E [ \| U_{k}\|^2 ] \to 0$.  From the Cauchy-Schwarz inequality, we get
\begin{align*}
\mathbb E [ \|   U_{k} \widetilde{ \Delta}_{k}\| ]\leq \sqrt {\varepsilon_{k} } \sqrt {\mathbb E [  \|\widetilde{ \Delta}_{k}\|_2^2 ]}.
\end{align*} 
On the other hand, we have already shown  in \eqref{bound:sigman} that $\rho(\Sigma_k) = \|\Sigma_k\| \leq U_D   \left(U_a + U_b \right)$. Since $\widetilde \Delta_k =\sqrt{H^{-1}}  \Theta_k = \sqrt{H^{-1}} \sqrt{\gamma_k}(Y_k + M_k)$, we have
\begin{align*}
\mathbb E [  \|\widetilde{ \Delta}_{k}\|_2^2 ] &\leq 2 (\gamma_k/\lambda_m)(\|Y_k\|_2^2  + \mathbb E [  \|M_k\|_2^2 ]),
\end{align*}
where the last term is the leading term and satisfies
\begin{align*}
\mathbb E [  \|M_k\|_2^2 ]] = \mathbb E [  \mathrm {Tr} (\Sigma_k)  ] \leq d \mathbb E [  \rho(\Sigma_k)].
\end{align*}
Therefore, we have
\begin{align*}
\mathbb E [  \|\widetilde{ \Delta}_{k}\|_2^2 ] \leq \gamma_k A
\end{align*}
for some $A>0$. Consequently, for all $k\geq j$,
\begin{align*}
e_ {k+1} &\leq  e_{k } ( 1  - \gamma_{k+1} (1-3\eta) \lambda_m  )  +   \gamma_{k+1}^{3/2} A' \|\sqrt{H}\| \varepsilon _{k}^{1/2}.
\end{align*}
The condition $2 \alpha \lambda_m (1-3\eta) > 1$ ensures that we can apply Lemma \ref{lemma:lim_sum_key} with $(m\lambda > p), m=1, p =1/2, \lambda = \alpha (1-3\eta) \lambda_m $.   we finally get
\begin{align*}
\limsup_k (e_ k  / \gamma_k ^{1/2} ) = 0. 
\end{align*}
As a consequence, $ e_k = o ( \sqrt {\gamma_k}  ) $, which concludes the proof.

Since  $\gamma_k^{-1/2}\sqrt{H} \widetilde \Delta_k \to \mathcal{N}(0,\Sigma)$, we have $\gamma_k^{-1/2} \widetilde \Delta_k \to \mathcal{N}(0,\widetilde \Sigma)$ where $\widetilde \Sigma = \sqrt{H^{-1}} \Sigma \sqrt{H^{-1}}$. Recall that $\Sigma$ satisfies the Lyapunov equation
\begin{align*}
(\sqrt{H}C\sqrt{H}-\zeta I_d)\Sigma + \Sigma(\sqrt{H}C\sqrt{H}-\zeta I_d)=\sqrt{H} C \Gamma C \sqrt{H}.
\end{align*}
Multiplying on the left and right sides by $\sqrt{H^{-1}}$, we get
\begin{align*}
C\sqrt{H} \Sigma \sqrt{H^{-1}} - \zeta \sqrt{H^{-1}}\Sigma\sqrt{H^{-1}} +\sqrt{H^{-1}} \Sigma \sqrt{H}C - \zeta \sqrt{H^{-1}}\Sigma\sqrt{H^{-1}} =C \Gamma C,
\end{align*}
where we recognize the following Lyapunov equation
\begin{align*}
(CH-\zeta I_d)\widetilde \Sigma + \widetilde \Sigma(CH-\zeta I_d)^\top=C \Gamma C.
\end{align*}


\subsection{Proof of the almost sure convergence (Theorem \ref{th:convergence_as})} \label{sec:proof_cv_as}

The idea behind the proof of the almost sure convergence is to apply the Robbins-Siegmund Theorem (Theorem \ref{th:rs}) (which can be found in Appendix \ref{sec:aux_lemma}) in combination with the following key deterministic result.

\begin{lemma}[Deterministic result]\label{lemma:key_ps_csgd}
Let $F:\rset^d \to \rset$ be a $L$-smooth function and $(\theta_t)$ a random sequence obtained by the SGD update rule $\theta_{t+1} = \theta_t - \gamma_{t+1} C_t g_t$ where $(\gamma)_{t \geq 1}$ a positive sequence of learning rates and $  C_{t-1} \preceq \nu_t I_d$ are such that $\sum_t \gamma_t \nu_t  = \infty$. Let $\omega\in \Omega$ such that the following limits exist:
\begin{align*}
&(i) \ \sum_{t\geq 0} \gamma_{t+1} \nu_{t+1}  \|\nabla F(\theta_t(\omega)) \|_2^2 < \infty \quad (ii) \ \sum_{t\geq 1}   \gamma _ {t} C_{t-1} (g_{t-1 }(\omega) - \nabla F(\theta_{t-1} (\omega)) ) < \infty 
\end{align*}
then  $\nabla F(\theta_t(\omega)) \to 0$ as $t \to \infty$.
\end{lemma}

\noindent \textbf{Proof.}
The proof (and in particular the reasoning by contradiction) is inspired from the proof of Proposition 1 in \cite{bertsekas2000gradient}. For ease of notation we omit the $\omega$ in the proof. Note that condition (i) along with $\sum_t \gamma_t \nu_t = \infty$ implie that $\liminf _{t} \| \nabla F(\theta_t) \| = 0$. Now, by contradiction, let $\varepsilon>0$ and assume that
\begin{align*}
\limsup _{t} \| \nabla F(\theta_t) \| >\varepsilon
\end{align*}
We have that  there is infinitely many $t$ such that $ \| \nabla F(\theta_t) \| < \varepsilon/2$ and also infinitely many $t$ such that $ \| \nabla F(\theta_t) \| >\varepsilon$. It follows that there is infinitely many \textit{crossings} between the sets $\{t \in \nset:  \| \nabla F(\theta_t) \| < \varepsilon/2\}$ and $\{t \in \nset:  \| \nabla F(\theta_t) \| > \varepsilon\}$. A \textit{crossing} is a collection of indexes $ I_k = \{L_k,L_k+1 ,\ldots, U_k - 1 \} $ with $L_k \leq U_k$ ($I_k = \emptyset $ when $L_k = U_k$) such that for all $t\in I_k $, 
$$\| \nabla F(\theta_{L_k-1} ) \| < \varepsilon / 2  \leq  \| \nabla F(\theta_t) \| \leq \varepsilon < \| \nabla F(\theta_{U_k} ) \| .$$
Define the following partial Cauchy sequence $R_k =  \sum_{t = L_k } ^{ U_k}   \gamma _ {t} (g_{t-1 } - \nabla F(\theta_{t-1} ) ) $ and note that condition (ii) implies that $R_k \to 0$ as $k \to \infty$. For all $k\geq 1$,
\begin{align*}
\varepsilon / 2 
&\leq \| \nabla F(\theta_{U_k } )    \|_2  -   \| \nabla F(\theta_{L_k-1} )    \|_2 \\
&\leq \| \nabla F(\theta_{U_k } )  -  \nabla F(\theta_{L_k-1} )    \|_2 \\
&\leq L \| \theta_{U_k}  - \theta_{L_k-1} \|_2,
\end{align*}
where we use that $\nabla F$ is $L$-Lipschitz. Then using the update rule $\theta_{t} - \theta_{t-1} = - \gamma_t C_{t-1}  g_{t-1}$, we have by sum
\begin{align*}
\varepsilon / 2  
&\leq L  \| \sum_{t = L_k } ^{ U_k}   \theta_{t}  - \theta_{t-1}  \|_2 =L \| \sum_{t = L_k } ^{ U_k}   \gamma _ {t}  C_{t-1}  g_{t-1 }  \|_2\\
& \leq  L \| \sum_{t = L_k } ^{ U_k}   \gamma _ {t}  C_{t-1}  \nabla F(\theta_{t-1} ) \|_2+  L\| \sum_{t = L_k } ^{ U_k}   \gamma _ {t}  C_{t-1}  (g_{t-1 } - \nabla F(\theta_{t-1} ) ) \|_2 \\
&\leq  L  \sum_{t = L_k } ^{ U_k}   \gamma _ {t} \nu_t \|  \nabla F(\theta_{t-1} ) \|_2+ L \|R_k\|_2 
\end{align*} 
Since in the previous equation $ \|  \nabla F(\theta_{t-1} ) \|_2 > \varepsilon / 2   $, we get
$$ (\varepsilon / 2   )^2 \leq  L  \sum_{t = L_k } ^{ U_k}  \gamma_t \nu _ {t} \|  \nabla F(\theta_{t-1} ) \|_2^2+ ( \varepsilon /2 ) L \|R_k\|_2 $$
But since $\sum_{t \geq 0 }   \gamma _ {t+1}  \nu_{t+1} \| \nabla F(\theta_{t}) \|^2 $ is finite and $\lim_k R_k =  0$, the previous upper bound goes to $0$ and implies a contradiction.
\qed

\noindent
It remains to show that points (i) and (ii) in Lemma \ref{lemma:key_ps_csgd} are valid with probability one.
Since $\theta \mapsto F(\theta)$ is $L$-smooth, we have the quadratic bound (see \cite{nesterov2013introductory})
\begin{align*}
\forall \theta,\eta \in \rset^d \quad F(\eta) \leq F(\theta) + \langle \nabla F(\theta),\eta-\theta \rangle + \frac{L}{2}\|\eta-\theta\|_2^2.
\end{align*}
Using the update rule $\theta_{k+1} = \theta_{k} - \gamma_{k+1} C_{k} g(\theta_k,\xi_{k+1})$, we get
\begin{align*}
F(\theta_{k+1}) 
&\leq F(\theta_{k}) + \langle \nabla F(\theta_{k}),\theta_{k+1}-\theta_{k} \rangle + \frac{L}{2}\|\theta_{k+1}-\theta_{k}\|_2^2 \\
&= F(\theta_{k}) - \gamma_{k+1} \langle \nabla F(\theta_{k}), C_{k} g(\theta_{k},\xi_{k+1}) \rangle + \frac{L}{2} \gamma_{k+1}^2\|C_{k} g(\theta_{k},\xi_{k+1})\|_2^2.
\end{align*}
The last term can be upper bounded using the matrix norm and Assumption \ref{ass:robbins_monro_3} as
\begin{align*}
\|C_{k} g(\theta_{k},\xi_{k+1})\|_2^2 \leq \|C_{k}\|^2 \|g(\theta_{k},\xi_{k+1})\|_2^2 \leq \nu_{k+1}^{2} \|g(\theta_{k},\xi_{k+1})\|_2^2,
\end{align*}
and we have the inequality
\begin{align*}
F(\theta_{k+1})  \leq F(\theta_{k}) - \gamma_{k+1} \langle \nabla F(\theta_{k}), C_{k} g(\theta_{k},\xi_{k+1}) \rangle + \frac{L}{2} (\gamma_{k+1}\nu_{k+1})^2 \|g(\theta_{k},\xi_{k+1})\|_2^2.
\end{align*}
Introduce  $u_k = \gamma_k \nu_k$ and $v_k = \gamma_k \mu_k$, we have $\sum_{k\geq 1} v_{k} = +\infty$ and $\sum_{k\geq 1} u_{k}^2 < +\infty$ a.s. in virtue of Assumption \ref{ass:robbins_monro_3}. The random variables $F(\theta_k),C_k$ are $\mcf_k$-measurable and the gradient estimate is unbiased with respect to $\mcf_k$. Taking the conditional expectation denoted by $\expect_k$ leads to
\begin{align*}
\expect_k\big[F(\theta_{k+1})\big] - F(\theta_k)
&\leq - \gamma_{k+1} \langle \nabla F(\theta_{k}),\expect_k\left[C_{k} g(\theta_{k},\xi_{k+1})\right] \rangle + \frac{L}{2} u_{k+1}^2\expect_k\left[\|g(\theta_{k},\xi_{k+1})\|_2^2\right] \\
&= - \gamma_{k+1} \nabla F(\theta_{k})^\top C_{k} \nabla F(\theta_{k})  + \frac{L}{2} u_{k+1}^2 \expect_k\left[\|g(\theta_{k},\xi_{k+1})\|_2^2\right].
\end{align*}
On the one hand for the first term, using Assumption \ref{ass:robbins_monro_3} ,
\begin{align*}
\nabla F(\theta_{k})^\top C_{k} \nabla F(\theta_{k}) \geq \lambda_{\min}(C_k) \| \nabla F(\theta_k) \|_2^2 \geq \mu_{k+1} \| \nabla F(\theta_k) \|_2^2.
\end{align*}
On the other hand, using Assumption \ref{ass:exp_smooth}, there exist $0 \leq \mathcal{L},\sigma^2 <\infty$ such that almost surely
\begin{align*}
\forall k \in \nset,  \quad \expect_k\left[\| g(\theta_k,\xi_{k+1})\|_2^2 \right] \leq 2 \mathcal{L} \left( F(\theta_k) - F^\star\right) + \sigma^2.
\end{align*}
Inject these bounds in the previous inequality and substract $F(\theta^\star)$ on both sides to have 
\begin{align*}
\expect_k\left[F(\theta_{k+1})-F^\star\right] \leq (1 +  L \mathcal{L} u_{k+1}^2 )(F(\theta_k)-F^\star) - v_{k+1} \| \nabla F(\theta_{k})\|_2^2 +  (L/2) u_{k+1}^2 \sigma^2.
\end{align*}
Introduce $V_k = F(\theta_k)-F^\star, W_k = v_{k+1} \|\nabla F(\theta_{k})\|_2^2, a_k=  L \mathcal{L} u_{k+1}^2$ and $b_k = (L/2) u_{k+1}^2 \sigma^2$. These four random sequences are non-negative $\mcf_k$-measurable sequences with $\sum_k a_k <\infty$ and $\sum_k b_k <\infty$ almost surely. Moreover we have
\begin{align*}
\forall k \in \nset, \quad \expect\left[V_{k+1}|\mcf_k\right] \leq (1+a_k)V_{k} - W_k + b_k.
\end{align*}
We can apply Robbins-Siegmund Theorem \ref{th:rs} to have
\begin{align*}
(a) \  \sum_{k \geq 0} W_k <\infty  \ a.s. \qquad (b) \  V_{k} \stackrel{a.s.}{\longrightarrow} V_{\infty}, \expect\left[V_{\infty}\right]<\infty. \qquad (c) \ \sup _{k \geq 0} \expect\left[V_{k}\right]<\infty.
\end{align*}
Therefore we have the almost sure convergence of the series $\sum v_{k+1} \|\nabla F(\theta_{k})\|_2^2$ which, given that $\limsup _ k \nu_k /  \mu_k  $ exists, implies that $\sum u_{k+1} \|\nabla F(\theta_{k})\|_2^2$ is finite. Hence we obtain (i) in Lemma \ref{lemma:key_ps_csgd}. We now show that (ii) in Lemma \ref{lemma:key_ps_csgd} is also valid. The term of interest is a sum of martingale increments. The quadratic variation is given by 
\begin{align*}
\sum_{t\geq 1}   \gamma _ {t}^2 \mathbb E _t[   \|   C_{t-1} (g_{t-1 }(\omega) - \nabla f(\theta_{t-1} (\omega)) ) \|_2^2  ] 
&\leq \sum_{t\geq 1}   \gamma_ {t}^2 \nu_t^2 \mathbb E _t[   \|   (g_{t-1 }(\omega) - \nabla F(\theta_{t-1} (\omega)) ) \|_2^2  ]\\
&\leq \sum_{t\geq 1}   \gamma _ {t}^2 \nu_t^2 \mathbb E _t[   \|   g_{t-1 }(\omega)   \|_2^2  ]\\
& \leq  \sum_{t\geq 1}   \gamma_ {t}^2 \nu_t^2  ( 2\mathcal L ( F (\theta_{t-1} ) - F^\star )  +\sigma^2).
\end{align*}
Now we can use that  $V_{k} =  F (\theta_{k} ) - F^\star  \stackrel{a.s.}{\longrightarrow} V_{\infty}$ (which was deduced from Robbins-Siegmund Theorem) to obtain that the previous series converges. Invoking Theorem 2.17 in \cite{hall:2014}, we obtain (ii) in Lemma \ref{lemma:key_ps_csgd}.
Furthermore we can prove that $\theta_{k+1}-\theta_{k} \to 0$ almost surely and in $L^2$. Indeed, we have
\begin{align*}
\expect\left[\|\theta_{k+1}-\theta_{k}\|_2^2\right]
= \expect\left[\|\gamma_{k+1} C_{k} g(\theta_k,\xi_{k+1} \|_2^2\right] 
\leq u_{k+1}^2 \left( 2 \mathcal{L} \left( F(\theta_k) - F^\star\right) + \sigma^2\right).
\end{align*}
In virtue of the almost sure convergence of $V_k = F(\theta_k)-F^\star$, the last term in parenthesis is upper bounded by a constant so that in view of the convergence of $\sum u_{k+1}^2$, we have the convergence of the series $\sum \expect\left[\|\theta_{k+1}-\theta_{k}\|_2^2\right]$. We then deduce that $\expect\left[\|\theta_{k+1}-\theta_{k}\|_2^2\right] \to 0$ and $\sum \left[\|\theta_{k+1}-\theta_{k}\|_2^2\right] <+\infty$ almost surely. In particular, $\theta_{k+1}-\theta_{k} \to 0$ in $L^2$ and almost surely. The last point follows from the fact that, for every $\delta>0$,
\begin{align*}
\lim_{n \to \infty} \prob\left(\sup_{k \geq n} \|\theta_{k+1}-\theta_{k}\| \geq \delta\right) \leq \delta^{-2} \lim_{n \to \infty} \sum_{k \geq n}  \expect\left[\|\theta_{k+1}-\theta_{k}\|_2^2\right] = 0.
\end{align*}

\subsection{Proof of Corollary \ref{cor:as_conv_coercive}}

First observe that since $F$ is coercive, the convergence of $(F(\theta_k))$ obtained by Robbins-Siegmund theorem implies that the sequence of iterates $(\theta_k)_{k \geq 0}$ remains in a compact subset $\mathcal{K} \subset\rset^d$. Let $\varepsilon >0$. Since $\theta \mapsto d(\theta,\mathcal{S})$ is continuous, the set $\mathcal{D}(\varepsilon) = \{ \theta \in \rset^d: d(\theta,\mathcal{S}) \geq \varepsilon\}$ is closed and the set $\mathcal{K}(\varepsilon) = \mathcal{K} \cap \mathcal{D}(\varepsilon)$ is compact. On this set, the map $\theta \mapsto \| \nabla F(\theta) \|_2$ is stricly positive and there exists $\eta_{\varepsilon} >0$ such that: $\theta \in \mathcal{K}(\varepsilon) \Rightarrow \| \nabla F(\theta) \|_2 > \eta_{\varepsilon}$. Thus, $\mathbb{P}(\theta \in \mathcal{K}(\varepsilon)) \leq \mathbb{P}(\| \nabla F(\theta) \|_2 > \eta_{\varepsilon} )$ and this last quantity goes to zero which proves the convergence in probability $d(\theta_k,\mathcal{S}) \to 0$. Actually the almost sure convergence $\nabla F(\theta_k) \to 0$ implies the convergence of the distances. Define $A_k(\varepsilon) = \{\omega: \theta_k(\omega) \in \mathcal{K}(\varepsilon)\}$ and $B_k(\varepsilon) = \{ \omega: \| \nabla F(\theta_k(\omega)) \|_2 > \eta_{\varepsilon}\}$. We have $A_k(\varepsilon) \subset B_k(\varepsilon)$ then $\cup_{n \geq 1} \cap_{k \geq n} A_k(\varepsilon) \subset \cup_{n \geq 1} \cap_{k \geq n} B_k(\varepsilon)$. Conclude by using the almost sure convergence $\mathbb{P}(\cup_{n \geq 1} \cap_{k \geq n} B_k(\varepsilon)) = 0$ for each $\varepsilon >0$. If $\mathcal{S}$ is finite, it is in particular a compact set so the distance is attained for every $k \geq 0$, $d(\theta_k,\mathcal{S}) = \min_{s \in \mathcal{S}} d(\theta_k,s) \to 0$. Since $\theta_{k+1} - \theta_k \to 0$, the sequence of iterates can only converge to a single point of $\mathcal{S}$.

\section{Practical procedure} \label{sec:application}

For the sake of completeness, the aim of this Section is to derive a feasible procedure that achieves the optimal asymptotic variance described in Corollary \ref{th:convergence_as_final}. First, we present a practical way to compute the \textit{conditioning} matrix $C_k$ and then we show that the resulting algorithm satisfies the high-level conditions of Theorem \ref{th:convergence_weak}. 
This method is considered in a numerical illustration along with a novel variant of AdaGrad.

\subsection{Construction of the conditioning matrix $C_k$} 
Similarly to the unavailability of gradients, one may not have access to values of the Hessian matrix but only stochastic versions of it (see details in numerical experiments below). As a consequence, we consider the following framework which involves random Hessian matrices. As for gradients,  a policy $(P_k')_{k\geq 0} $ is used at each iteration to produce random Hessians through $H(\theta_{k}, \xi_{k+1}') $ with  $\xi_{k+1}' \sim P_k' $. 

\begin{assumption}[Unbiased and bounded Hessians]\label{ass:unbiased_hessian}
The Hessian generator $H: \mathbb{R}^{d} \times S \rightarrow \mathbb{R}^{d\times d} $ is uniformly bounded around the minimizer and is such that for all $\theta \in \mathbb R^d$, $H(\theta, \cdot)$ is measurable and we have:\\ $\forall k\geq 0 ,\quad \expect\left[ H (\theta_{k} , \xi_{k+1}')  |\mcf_{k}\right]   =   \nabla^2 F(\theta_{k}).$
 \end{assumption}

\noindent
An estimate of the Hessian matrix $H=\nabla^2 F(\theta^\star)$ is now introduced as the weighted average
\begin{align}\label{weighted_schemes}
\Phi_k =  \sum_{j=0}^k \omega_{j,k}  H (\theta_{j} , \xi_{j+1}') \quad \text{ with } \quad \sum_{j=0}^k \omega_{j,k}  = 1.
\end{align}
The previous estimate has two advantages. First, thanks to averaging, the noise associated to each evaluation $H (\theta_{j} , \xi_{j+1}') $ will eventually vanished due to the sum of martingale increments. Second, the weights  $\omega_{j,k} $ may help to give more importance to most recent iterates. In the idea that $\theta_k $ lies near $\theta^\star$ eventually, it might be helpful to reduce the bias when estimating $H=\nabla^2 F(\theta^\star)$. 

\begin{proposition} \label{prop:cv_ck}
 Let $(\Phi_k)_{k \geq 0}$ be obtained by \eqref{weighted_schemes}.
Suppose that Assumptions  \ref{ass:hessian} and \ref{ass:unbiased_hessian} are fulfilled and that $\theta_k \to \theta^\star$ a.s.  If $\sup_{0 \leq j \leq k}\omega_{j,k} = O(1/k)$,  then we have $\Phi_k \to H = \nabla^2 F(\theta^\star)$ a.s.
\end{proposition}
\noindent
A natural choice is to take equal weights $\omega_{j,k}=(k+1)^{-1}$. However, since  the last iterates are more likely to bring more relevant information through their Hessian estimates, we advocate the use of adaptive weights of the form $\omega_{j,k}  \propto   \exp( -  \eta \| \theta_j - \theta_k\|_1 ) $ with a parameter $\eta \geq 0$ that recovers equal weights with $\eta=0$. These two weights sequences satisfy the assumption of  Proposition \ref{prop:cv_ck}. They are considered in the numerical illustration of the next Section. While inverting $\Phi_k$ would produce a simple estimate of $H^{-1}$, such an approach might result in a certain instability in practice caused by large jumps towards wrong directions (large eigenvalues) or a too restrictive visit along other components (vanishing eigenvalues). 
To overcome this issue, we rely on the following filter which clamps the eigenvalues of a symmetric matrix. For any symmetric matrix $S$ and two positive numbers $0<a<b$, denote by  $S[a,b] $ the associated matrix where all the eigenvalues are clamped to $[a,b]$, \textit{i.e.}, any eigenvalue $\lambda$ of $S$ is modified as $\lambda \leftarrow \max\{a,\min\{\lambda,b\}\}$. 

Let $(\lambda_k^{(m)})_{k\geq 1}$ and $(\lambda_k^{(M)})_{k\geq 1}$ be two sequence of positive numbers such that $\lambda_k^{(m)} \leq \lambda_k^{(M)} $ for all $k\geq 1$. Define the matrices
\begin{align} \label{eq:mixture}
\forall k \in \nset, \quad C_{k} = \left(  \Phi_k [  (\lambda_{k+1}^{(M)})^{-1} , (\lambda_{k+1}^{(m)})^{-1}     ]   \right)^{-1}.
\end{align}
Such a definition guarantees two properties. First,  $C_k \in \mathcal{S}_d^{++}(\rset)$ with $ \lambda_{k+1}^{(m)} I_d \preceq C_k \preceq \lambda_{k+1}^{(M)} I_d$. Second, in virtue of Proposition \ref{prop:cv_ck}, $\Phi_k \to H$ a.s. so that, as soon as $(\lambda_k^{(m)})_{k\geq 1}$ and $(\lambda_k^{(M)})_{k\geq 1}$ go to $0$ and $+\infty$ respectively, the matrix $C_k$ converges almost surely to $H^{-1}$ (as recommended by Corollary \ref{th:convergence_as_final}). Therefore, we obtain a feasible procedure leading to asymptotic optimality. 





\begin{theorem}[Asymptotic optimality of the iterates] \label{th:optimality_practice} 
Let $(\theta_k)_{k \geq 0}$ be  obtained by conditioned SGD \eqref{eq:cond_sgd} with $\gamma_k = 1/k$, $\Phi_k$ defined by \eqref{weighted_schemes}, $\lambda_k^{(m)} \to 0, \lambda_k^{(M)} \to +\infty$ and $C_k$ given by \eqref{eq:mixture}. Suppose that Assumptions \ref{ass:unbiased} 
to \ref{ass:robbins_monro_3} are fulfilled and  $\sup_{0 \leq j \leq k}\omega_{j,k} = O(1/k)$.  We have 
\begin{align*}
\sqrt{k} (\theta_k - \theta^\star) \leadsto \mathcal{N}(0, H^{-1} \Gamma H^{-1} ), \qquad \text{as } k\to \infty.
\end{align*}
\end{theorem}
This algorithm is theoretically asymptotically optimal. However in practice, adaptive gradient methods described in Table \ref{table:optimizers} have become the workhorse for training deep learning models as they take advantage of low rank-approximations and diagonal scalings. Interestingly, the \textit{conditioned} matrices involved in these methods are linked to gradient estimates and thus to covariance matrices $\Gamma_k$ (see Assumption \ref{ass:cov}) rather than the Hessian H. Indeed, since $\theta^\star \in \mathcal{S}$, we have for the limiting covariance $\Gamma = \expect_\xi[g(\theta^\star,\xi)g(\theta^\star,\xi)^\top]$. Consider a variant of AdaGrad which accumulates the average gradients $G_k = \delta I + (1/k) \sum_{i=1}^{k} g_i g_i^\top$ and $C_k = G_k^{-1/2}$. Averaging allows to anneal the stochastic noise of the gradient estimate. By the law of large numbers, the limiting matrix in our Theorem \ref{th:convergence_weak} will be $C = (\Gamma + \delta I)^{-1/2}$. 

\subsection{Numerical illustration} 

Consider the empirical risk minimization framework applied to Generalized Linear Models. Given a data matrix $X=(x_{i,j}) \in \rset^{n \times d}$ with labels $y \in \rset^n$ and a regularization parameter $\lambda>0$, we are interested in solving $\min_{\theta \in \rset^d} F(\theta)$ with
\begin{align*}
F(\theta) = \frac{1}{n} \sum_{i=1}^n f_i(\theta), \quad  f_i(\theta)= \mathcal{L}(x_i^\top \theta,y_i) + \lambda \Omega(\theta),
\end{align*}
$\mathcal{L}:\rset \times \rset \to \rset$ is smooth loss function and $\Omega: \rset^d \to \rset_+$ is a smooth convex regularizer chosen as Tikhonov regularization $\Omega(\theta) = \frac{1}{2}\|\theta\|_2^2$. The gradient and Hessian of each component $f_i$ are given for all $i=1,\ldots,n$ by
\begin{align*}
\nabla f_i(\theta) &= \mathcal{L}'(x_i^\top \theta,y_i)x_i + \lambda \theta \\
\nabla^2 f_i(\theta) &= \mathcal{L}''(x_i^\top \theta,y_i)x_i x_i^\top + \lambda I_d,
\end{align*}
where $\mathcal{L}'(\cdot,\cdot)$ and $\mathcal{L}''(\cdot,\cdot)$ are the first and second derivative of $\mathcal{L}(\cdot,\cdot)$ with respect to the first argument. Consider two well-known losses, namely least-squares and logistic. These losses are respectively associated to the Ridge regression problem with $y \in \rset^n$ and the binary classication task with $y \in \{-1,+1\}^n$. The regularization parameter is set to the classical value $\lambda=1/n$. 
Denote by $\sigma(z) = 1/(1+\exp(-z))$ the sigmoid function, we have the following closed-form equations 

\begin{minipage}[h]{0.5\textwidth}
\begin{center}
(Ridge Regression)
\end{center}
$$
\left\{
    \begin{array}{lll}
        \mathcal{L}(x_i^\top \theta,y_i) &= \frac{1}{2}(y_i - x_i^\top \theta)^2 \\
        \mathcal{L}'(x_i^\top \theta,y_i) &= x_i^\top \theta - y_i \\
        \mathcal{L}''(x_i^\top \theta,y_i) &=1
    \end{array}
\right.
$$
\end{minipage}
\hfill
\begin{minipage}[h]{0.5\textwidth}
\begin{center}
(Logistic Regression)
\end{center}
$$
 \left\{
    \begin{array}{lll}
        \mathcal{L}(x_i^\top \theta,y_i) &= \log(1+\exp(-y_i x_i^\top \theta)) \\
        \mathcal{L}'(x_i^\top \theta,y_i) &= \sigma(x_i^\top \theta)-y_i \\
        \mathcal{L}''(x_i^\top \theta,y_i) &= \sigma(x_i^\top \theta)(1-\sigma(x_i^\top \theta))
    \end{array}
\right.
$$
 \end{minipage}

 As stated in Example 1 of Section \ref{sec:csgd_math_background}, stochastic versions of both the gradient and the Hessian of the objective $F$ can be easily computed using only a batch $B \subset \{ 1,\ldots,n \}$ of data and $\nabla_B F(\theta) = \sum_{i \in B} \nabla f_i(\theta)/|B|$ (resp. $\nabla_B^2 F(\theta) = \sum_{i \in B} \nabla^2 f_i(\theta)/|B|$) for the gradient (resp. Hessian) estimate. Note that these random generators meet Assumptions \ref{ass:unbiased} and \ref{ass:unbiased_hessian} as they produce unbiased estimates of the gradient and the Hessian matrix respectively.

For the sake of completeness and illustrative purposes, we compare the performance of classical stochastic gradient descent (sgd) and the \textit{conditionned} variant (csgd) presented in Appendix \ref{sec:application} where the matrix $\Phi_k$ is an averaging of past Hessian estimates as given in Equation \eqref{weighted_schemes}. We shall compare equal weights $\omega_{j,k} = (k+1)^{-1}$ and adaptive weights $\omega_{j,k}  \propto   \exp( -  \eta \| \theta_j - \theta_k\|_1)$ with $\eta >0$ to give more importance to Hessian estimates associated to iterates which are closed to the current point. Furthermore, for computational reason, we consider a novel adaptive stochastic first-order method which is a variant of Adagrad. 

Starting from the null vector $\theta_0=(0,\ldots,0) \in \rset^d$, we use optimal learning rate of the form $\gamma_k = \alpha/(k+k_0)$ \citep{bottou2018optimization} and set $\lambda_k^{(m)} \equiv 0, \lambda_k^{(M)} = \Lambda \sqrt{k}$ in the experiments where $\gamma,k_0$ and $\Lambda$ are tuned using a grid search. The means of the optimality ratio $k \mapsto [F(\theta_k)-F(\theta^\star)]/[F(\theta_0)-F(\theta^\star)]$, obtained over $100$ independent runs, are presented in Figures below.

\bigskip

\textbf{Methods in competition.} The different methods in the experiments are:
\begin{itemize}
\item  \textit{sgd}: standard stochastic gradient descent.
\item \textit{sgd\_avg}: Polyak-averaging stochastic gradient descent , with a burn-in period ($n_0=15$ for $d=20$ and $n_0=30$ for $d=100$) to avoid the poor performance of bad initialization.
\item \textit{csgd}($\eta=0$) and \textit{csgd}($\eta>0$): \textit{conditioned} stochastic gradient descent methods with equal and adaptive weights  where the matrix $\Phi_k$ is an averaging of past Hessian estimates as given in Equation \eqref{weighted_schemes}.
\item \textit{adafull\_avg}: The variant of Adagrad presented in Appendix \ref{sec:application} where the gradient matrix $G_k$ is updated as an average $G_k = \delta I + (1/k) \sum_{i=1}^{k} g_i g_i^\top$ and $C_k = G_k^{-1/2}$ instead of the cumulative sum provided in the literature of Adagrad. Note that averaging here allows to anneal the stochastic noise whereas classical versions of Adagrad often rely on true gradients and may use cumulative sums. The parameter $\delta$ is also tuned using a grid search.
\end{itemize}

We focus on Ridge regression on simulated data with $n = 10,000$ samples in dimensions $d \in \{20; 100\}$. Stochastic gradient methods are known to greatly benefit from mini-batch instead of picking a single random sample when computing the gradient estimate. We use a batch-size equal to $|B|=16$. In Figure \ref{fig:simulated_bis}, we can see that \textit{conditioned} SGD outperforms standard SGD. Furthermore, adaptive weights ($\eta>0$) improve the convergence speed of \textit{conditioned} SGD methods. Interestingly, the novel approach \textit{adafull\_avg} offers great performance at a cheap computing cost. Indeed, the update of $C_{k+1}$ relies on the inverse of an average. This operation can be carried out in an efficient way thanks to Woodbury matrix identity.

\begin{figure}[h!]
  \centering
  \subfigure[Ridge $d=20$]{
  \includegraphics[scale=0.49]{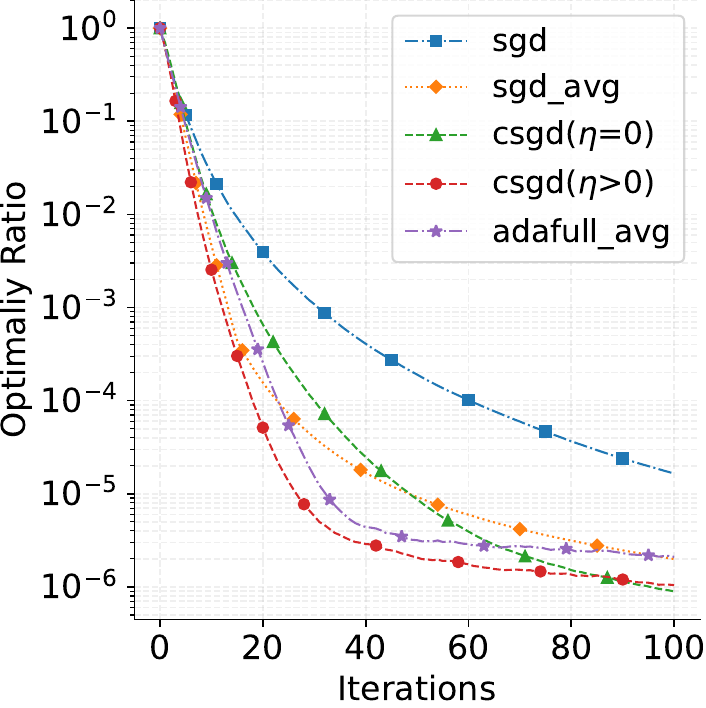}\label{fig:ridged20}}
  \subfigure[Ridge $d=100$]{
  \includegraphics[scale=0.49]{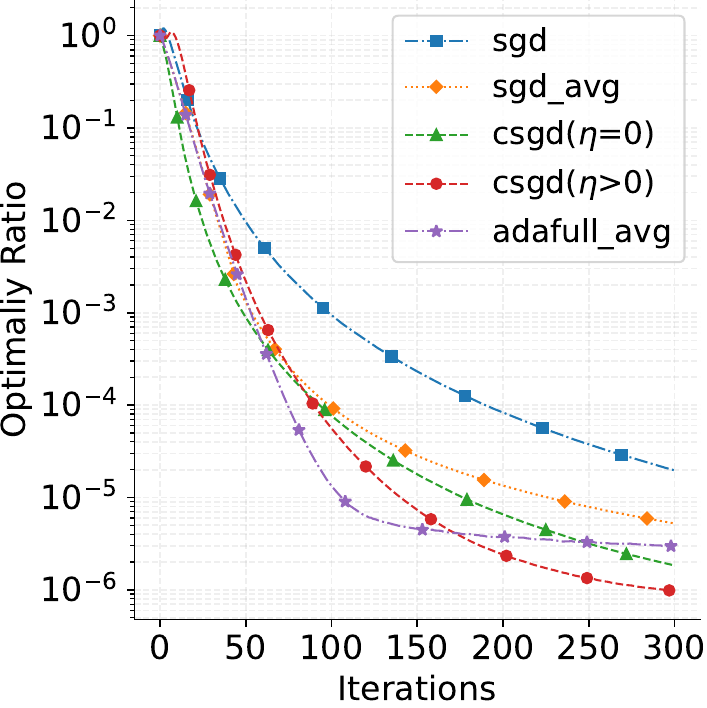}\label{fig:ridged100}}
    \caption{Optimality ratio $k \mapsto [F(\theta_k)-F(\theta^\star)]/[F(\theta_0)-F(\theta^\star)]$ for Ridge regression in dimension $d \in \{20;100\}$.}
    \label{fig:simulated_bis}
\end{figure}

 \textbf{Real-world data.} We now turn our attention to real-world data and consider again the Ridge regression problem on the following datasets: 
\textit{Boston Housing dataset} \citep{harrison1978hedonic} ($n=506;d=14$) and \textit{Diabetes dataset} \citep{Dua:2019}  ($n=442;d=10$). 
\begin{itemize}
\item \textit{Boston Housing dataset} \citep{harrison1978hedonic}: This dataset contains information collected by the U.S Census Service concerning housing in the area of Boston Mass. It contains $n=506$ samples in dimension $d=14$. 
\item \textit{Diabetes dataset} \citep{Dua:2019}: Ten baseline variables, age, sex, body mass index, average blood pressure, and six blood serum measurements were obtained for each of $n = 442$ diabetes patients, as well as the response of interest, a quantitative measure of disease progression one year after baseline.
\end{itemize}
The means of the optimality ratio $k \mapsto [F(\theta_k)-F(\theta^\star)]/[F(\theta_0)-F(\theta^\star)]$, obtained over $100$ independent runs, are presented in Figure \ref{fig:real}. Once again, the \textit{conditioned} SGD methods offer better performance than plain SGD. For these datasets, it is the \textit{conditioning} matrix with adaptive weights as given in Equation \eqref{weighted_schemes} which presents the best results.

\begin{figure}[h!]
  \centering
  \subfigure[Boston Dataset]{
  \includegraphics[scale=0.49]{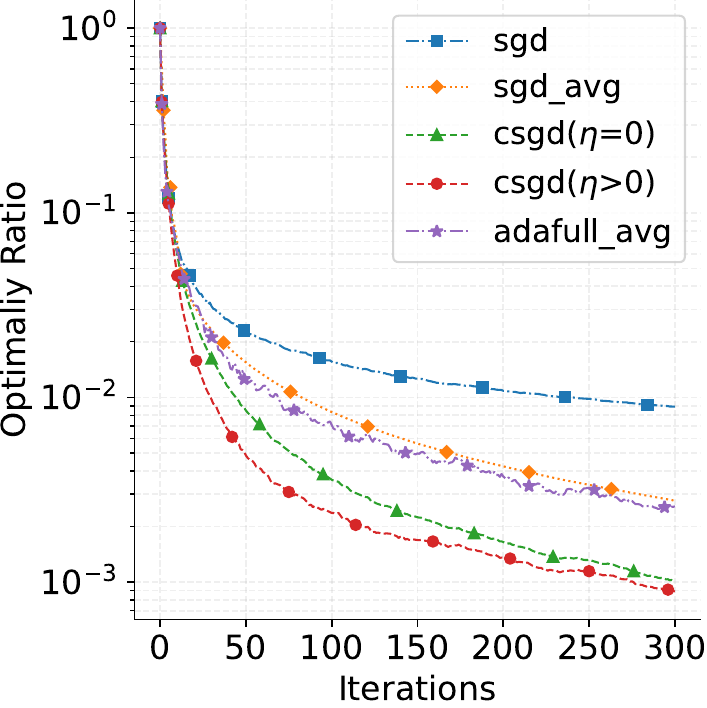}\label{fig:boston}}
  \subfigure[Diabetes Dataset]{
  \includegraphics[scale=0.49]{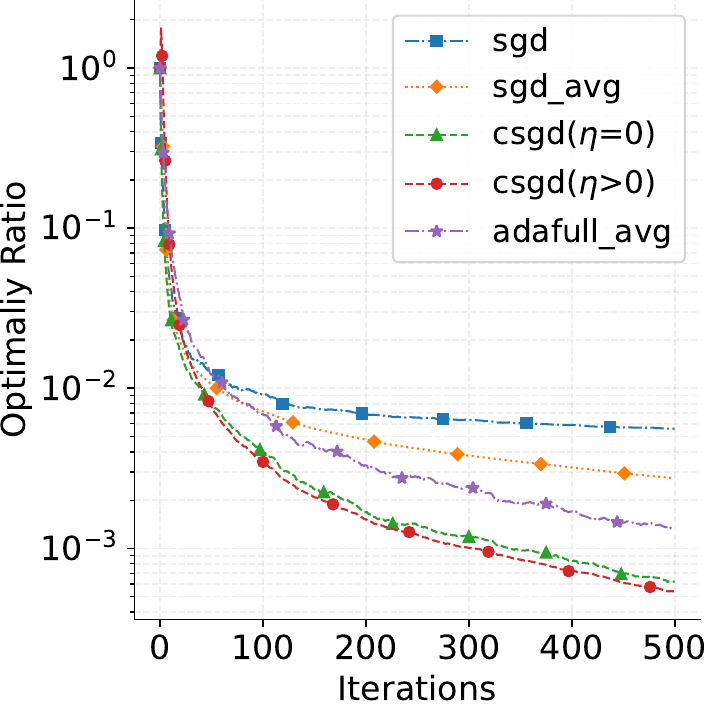}\label{fig:diabetes}}
    \caption{Optimality ratio $k \mapsto [F(\theta_k)-F(\theta^\star)]/[F(\theta_0)-F(\theta^\star)]$ for Ridge regression on datasets Boston and Diabetes.}
    \label{fig:real}
\end{figure}

\section{Auxiliary results} \label{sec:aux_lemma}
\subsection{Robbins-Siegmund Theorem}
\begin{theorem}(\cite{robbins1971convergence}) \label{th:rs} Consider a filtration $\left(\mathcal{F}_{n}\right)_{n \geq 0}$ and four sequences of random variables$\left(V_{n}\right)_{n \geq 0},\left(W_{n}\right)_{n \geq 0}, \left(a_{n}\right)_{n \geq 0}$ and $\left(b_{n}\right)_{n \geq 0}$ that are adapted and non-negative. Assume that almost surely $\sum_{k} a_k <\infty$ and $\sum_{k} b_k <\infty$. Assume moreover that $\expect\left[V_0\right] < \infty$ and for all $ n \in \nset, \quad \expect[V_{n+1} | \mcf_n ] \leq (1 + a_n) V_{n} - W_{n} + b_{n}$. Then it holds 
\begin{align*}
(a) \ \sum_{k} W_k <\infty  \ a.s.  \qquad (b) \ V_{n} \stackrel{a.s.}{\longrightarrow} V_{\infty}, \expect\left[V_{\infty}\right]<\infty. \qquad (c) \ \sup _{n \geq 0} \expect\left[V_{n}\right]<\infty.
\end{align*}
\end{theorem}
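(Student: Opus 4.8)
The plan is to reduce the almost-supermartingale inequality to a genuine supermartingale by absorbing the multiplicative factor $(1+a_n)$ and the additive term $b_n$ through a change of variables, then to invoke Doob's supermartingale convergence theorem — the one real subtlety being that $(a_n)$ and $(b_n)$ are random and only almost surely summable, which I would handle by localization. First I would set $A_n = \prod_{j=0}^{n-1}(1+a_j)$ with $A_0 = 1$, so that $A_{n+1} = (1+a_n)A_n$ and $A_{n+1}$ is $\mcf_n$-measurable. Since $a_n \geq 0$ and $\sum_k a_k < \infty$ almost surely, these products converge almost surely to a finite limit $A_\infty \in [1,\infty)$, so that $1 \leq A_n \leq A_\infty$ for all $n$. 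Dividing the hypothesis by the $\mcf_n$-measurable factor $A_{n+1}$ gives
\begin{align*}
\expec\!\left[\frac{V_{n+1}}{A_{n+1}} \,\Big|\, \mcf_n\right] \leq \frac{V_n}{A_n} - \frac{W_n}{A_{n+1}} + \frac{b_n}{A_{n+1}}.
\end{align*}

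Next I would introduce the non-negative process $Z_n = V_n/A_n + \sum_{j=0}^{n-1} W_j/A_{j+1}$; the compensating sum telescopes the $W$-contribution, leaving
\begin{align*}
\expec[Z_{n+1} \,|\, \mcf_n] \leq Z_n + \beta_n, \qquad \beta_n := b_n/A_{n+1} \leq b_n,
\end{align*}
so $(Z_n)$ is a non-negative process perturbed by the almost surely summable sequence $(\beta_n)$. Removing this perturbation by setting $T_n = Z_n - \sum_{j=0}^{n-1}\beta_j$ produces a genuine supermartingale, bounded below by $-\sum_j \beta_j$, which is finite but \emph{random}. This is precisely the main obstacle: the random lower bound forbids a direct application of Doob's theorem. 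To circumvent it I would localize, defining for $m \in \nset$ the stopping time $\tau_m = \inf\{n : \sum_{j=0}^{n} a_j > m \text{ or } \sum_{j=0}^{n} b_j > m\}$. For $n \leq \tau_m$ one has $\sum_{j<n}\beta_j \leq \sum_{j<n} b_j \leq m$, so the stopped supermartingale $(T_{n\wedge\tau_m})_n$ is bounded below by the deterministic constant $-m$ and hence converges almost surely by Doob's supermartingale convergence theorem. Because $\sum_k a_k < \infty$ and $\sum_k b_k < \infty$ almost surely, $\tau_m \uparrow \infty$ almost surely, so the events $\{\tau_m = \infty\}$ increase to a set of full probability; on each of them $T_{n\wedge\tau_m} = T_n$, whence $(T_n)$ converges almost surely.

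Finally I would unwind the transformation. Since $\sum_j \beta_j < \infty$ almost surely, convergence of $T_n$ forces convergence of $Z_n$. Its component $\sum_{j<n} W_j/A_{j+1}$ is non-decreasing and bounded above, hence summable; using $A_{j+1} \leq A_\infty$ this yields $\sum_j W_j \leq A_\infty \sum_j W_j/A_{j+1} < \infty$ almost surely, which is conclusion $(a)$. Consequently $V_n/A_n$ converges, and multiplying by $A_n \to A_\infty$ shows $V_n \to V_\infty$ almost surely for a finite $V_\infty$, giving the almost sure part of $(b)$. For the integrability assertions I would take expectations in the normalized relation via the tower property and sum, obtaining $\expec[Z_n] \leq \expec[V_0] + \sum_j \expec[\beta_j] \leq \expec[V_0] + \sum_j \expec[b_j]$; using $A_n \leq A_\infty$ to transfer this bound back to $\expec[V_n]$ yields $\sup_n \expec[V_n] < \infty$, which is $(c)$, the step where summability in expectation of $(a_n)$ and $(b_n)$ is used. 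Fatou's lemma applied to $V_n \to V_\infty$ then gives $\expec[V_\infty] \leq \liminf_n \expec[V_n] < \infty$, completing $(b)$.
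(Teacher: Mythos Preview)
Your proof is correct and shares the same backbone with the paper's: both divide through by the product $\prod_j(1+a_j)$ to kill the multiplicative factor and then reduce to a supermartingale argument. The one genuine difference is in handling the additive term $b_n$. The paper adds the \emph{tail} $\sum_{k\geq n}\pi_k b_k$ to its normalized process, so that the resulting $S_n=\widetilde V_n+\sum_{k<n}\widetilde W_k+\sum_{k\geq n}\widetilde b_k$ stays non-negative and is a supermartingale outright, which sidesteps any localization. You instead subtract the partial sum $\sum_{j<n}\beta_j$, which forces the stopping-time argument to cope with the random lower bound. Both routes are standard; the paper's trick is a little cleaner here, while yours makes the distinction between a.s.\ and $L^1$ summability more explicit. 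Your caveat on $(c)$ is well placed: transferring a bound on $\expec[V_n/A_n]$ back to $\expec[V_n]$ via $A_n\leq A_\infty$ is delicate because $A_\infty$ is random, and indeed the paper's own line $\expec[V_n]\leq\pi_\infty^{-1}\expec[S_0]$ has exactly the same issue; in the applications of this paper the sequences $a_n,b_n$ are such that this step is harmless.
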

\subsection{Auxiliary lemmas}
\begin{lemma} \label{lemma:lim_sup}
Let $(u_n)_{n \geq 1}, (v_n)_{n \geq 1}$ and $(\gamma_n)_{n \geq 1}$ be non-negative sequences such that $\gamma_n \to 0$ and $\sum_n \gamma_n = +\infty.$ Assume that there exists a real number $m \geq 1$ and $j \geq 1$ such that for all  $n \geq j, \quad
u_n \leq (1-\gamma_n)^m u_{n-1} + \gamma_n v_n.$ Then it holds that $\limsup\limits_{n \to +\infty} u_n \leq \limsup\limits_{n \to +\infty} v_n.$
\end{lemma}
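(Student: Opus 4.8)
Write $L=\limsup_{n}v_n$. If $L=+\infty$ there is nothing to prove, so assume $L<\infty$. Fix $\varepsilon>0$; the goal is to show $\limsup_n u_n\le L+\varepsilon$, and then let $\varepsilon\downarrow 0$. First I would pick $N\ge j$ large enough so that simultaneously $v_n\le L+\varepsilon$ and $\alpha_n\le 1$ for all $n\ge N$ (possible since $v_n$ eventually lies below $L+\varepsilon$ and $\alpha_n\to 0$). The elementary inequality $t^m\le t$ for $t\in[0,1]$ and $m\ge 1$ gives $(1-\alpha_n)^m\le 1-\alpha_n$ for $n\ge N$, so the hypothesis degrades to the affine recursion
\begin{align*}
u_n \le (1-\alpha_n)\,u_{n-1} + \alpha_n(L+\varepsilon),\qquad n\ge N.
\end{align*}

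Next I would substitute $a_n := u_n-(L+\varepsilon)$, which turns the above into $a_n\le (1-\alpha_n)\,a_{n-1}$ for $n\ge N$. The only delicate point is that $a_n$ need not be nonnegative, so one cannot iterate the bound naively; I would handle this by passing to positive parts. Since $1-\alpha_n\ge 0$ and $a_{n-1}\le a_{n-1}^{+}:=\max(a_{n-1},0)$, we get $a_n\le (1-\alpha_n)a_{n-1}^{+}$, and because the right-hand side is nonnegative this yields $a_n^{+}\le (1-\alpha_n)\,a_{n-1}^{+}$ for all $n\ge N$. Iterating,
\begin{align*}
a_n^{+}\le a_{N-1}^{+}\prod_{k=N}^{n}(1-\alpha_k)\le a_{N-1}^{+}\exp\!\Big(-\sum_{k=N}^{n}\alpha_k\Big),
\end{align*}
where $a_{N-1}^{+}$ is a fixed finite number.

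Finally, since $\sum_k\alpha_k=+\infty$, the exponential term tends to $0$, hence $a_n^{+}\to 0$, i.e.\ $\limsup_n a_n\le 0$, which is exactly $\limsup_n u_n\le L+\varepsilon$. Letting $\varepsilon\to 0$ gives $\limsup_n u_n\le L=\limsup_n v_n$, completing the proof. The main obstacle is purely the sign issue just mentioned — making the telescoping argument rigorous when the $a_n$ may change sign — and it is resolved cleanly by the positive-part device; everything else (the inequality $(1-\alpha_n)^m\le 1-\alpha_n$ and the product-to-exponential bound) is routine.
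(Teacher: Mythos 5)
Your proof is correct and follows essentially the same route as the paper's: both replace $(1-\alpha_n)^m$ by $1-\alpha_n$ for large $n$, subtract $\limsup_n v_n+\varepsilon$, pass to positive parts to handle the sign issue, and conclude from $\sum_n\alpha_n=+\infty$ that the positive part vanishes. The only cosmetic difference is that you make the final step explicit via the product-to-exponential bound, where the paper leaves it implicit.
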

\begin{proof}
Denote $x_+ = \max(x,0)$. One has $(x+y)_+ \leq x_+ + y_+$. Set $\varepsilon>0$ and $v = \limsup_{n} v_n + \varepsilon$. Then there exists an integer $N \geq 1$ such that $(1-\gamma_n)^m \leq (1-\gamma_n)$ and $v_n < v$, i.e., $(v_n - v)_+ = 0$ for $n \geq N$. We have for large enough $n \geq N \vee j$,
\begin{align*}
u_n - v &\leq (1-\gamma_n)(u_{n-1} - v) + \gamma_n (v_n - v),
\end{align*}
and taking the positive part gives
\begin{align*}
(u_n - v)_+ &\leq (1-\gamma_n)(u_{n-1} - v)_+ + \gamma_n (v_n - v)_+ = (1-\gamma_n)(u_{n-1} - v)_+.
\end{align*}
Since $\sum_n \gamma_n = +\infty$, this inequality implies that $(u_n - v)_+$ tends to zero, but this is true for all $\varepsilon>0$ so $v$ is arbitrarily close to $\lim \sup_n v_n$ and the result follows.
\end{proof}
\begin{lemma} \label{lemma:lim_sum_key}
Let $(\gamma_n)_{n \geq 1}$ be a non-negative sequence converging to zero, and $\lambda, m$ and $p$ three real numbers with $\lambda >0, m \geq 1, p \geq 0$. Consider two non-negative sequences $(x_n),(\varepsilon_n)$ and an integer $j \geq 1$ such that
\begin{align*}
\forall n \geq j, \quad &x_n = (1-\lambda \gamma_n)^m x_{n-1} + \gamma_n^{p+1} \varepsilon_n, \\
\textit{i.e., } \quad  &x_n = \prod_{i=j}^n (1-\lambda \gamma_i)^m x_{j-1} + \sum_{k=j}^n \gamma_k^{p+1} \left(\prod_{i=k+1}^n (1-\lambda \gamma_i)^m \right) \varepsilon_k.
\end{align*}
The following holds \\
\textbullet \ if $\gamma_n = n^{-\beta}, \beta \in (1/2,1)$, then for any p
\begin{align*}
\limsup_{n \to +\infty} \frac{x_n}{\gamma_n^p} \leq \frac{1}{m \lambda} \limsup_{n \to +\infty} \varepsilon_n.
\end{align*}
\textbullet \ if $\gamma_n = 1/n$, then for any $p<m\lambda$
\begin{align*}
\limsup_{n \to +\infty} \frac{x_n}{\gamma_n^p} \leq \frac{1}{m \lambda-p} \limsup_{n \to +\infty} \varepsilon_n.
\end{align*}
In particular, when $\varepsilon_n \rightarrow 0$ with $j =1$ and $x_0 = 0$,
\begin{align*}
&\lim_{n \rightarrow + \infty} \sum_{k=1}^n \gamma_k \prod_{i=k+1}^n (1-\lambda \gamma_i)^m  \varepsilon_k = 0, \\
(m\lambda>1)&\lim_{n \rightarrow + \infty} \frac{1}{\gamma_n} \sum_{k=1}^n \gamma_k^2 \prod_{i=k+1}^n (1-\lambda \gamma_i)^m\varepsilon_k = 0.
\end{align*}
\end{lemma}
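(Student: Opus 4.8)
The plan is to reduce the whole statement to Lemma~\ref{lemma:lim_sup} applied to the rescaled sequence $u_n = x_n/\alpha_n^p$. Dividing the recursion $x_n = (1-\lambda\alpha_n)^m x_{n-1} + \alpha_n^{p+1}\varepsilon_n$ by $\alpha_n^p$ gives, for $n\geq j$,
\begin{align*}
u_n = \left(\frac{\alpha_{n-1}}{\alpha_n}\right)^{p}(1-\lambda\alpha_n)^m\, u_{n-1} + \alpha_n \varepsilon_n .
\end{align*}
I would therefore set $\widetilde\alpha_n := 1 - (\alpha_{n-1}/\alpha_n)^{p}(1-\lambda\alpha_n)^m$ and $v_n := (\alpha_n/\widetilde\alpha_n)\varepsilon_n$ (for $n$ large enough that $\widetilde\alpha_n>0$), so that $u_n = (1-\widetilde\alpha_n)u_{n-1} + \widetilde\alpha_n v_n$, which is precisely the inequality required by Lemma~\ref{lemma:lim_sup} with exponent $1$. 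The entire argument then reduces to checking that $(\widetilde\alpha_n)$ is eventually nonnegative, tends to $0$, and has divergent sum, and that $\alpha_n/\widetilde\alpha_n$ converges to a finite limit $\ell>0$; granting this, Lemma~\ref{lemma:lim_sup} yields $\limsup_n u_n \leq \limsup_n v_n = \ell\,\limsup_n \varepsilon_n$ (the case $\limsup_n\varepsilon_n=+\infty$ being trivial).

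Next I would carry out the two asymptotic expansions of the coefficient $(\alpha_{n-1}/\alpha_n)^{p}(1-\lambda\alpha_n)^m$. If $\alpha_n = n^{-\beta}$ with $\beta\in(1/2,1)$, then $(\alpha_{n-1}/\alpha_n)^{p} = 1 + O(1/n)$ and, crucially, $1/n = \alpha_n^{1/\beta} = o(\alpha_n)$ since $1/\beta>1$; combining this with $(1-\lambda\alpha_n)^m = 1 - m\lambda\alpha_n + O(\alpha_n^2)$ gives $\widetilde\alpha_n = m\lambda\alpha_n + o(\alpha_n)$. Hence $\alpha_n/\widetilde\alpha_n \to 1/(m\lambda)$, $\widetilde\alpha_n\to 0$, and $\sum_n\widetilde\alpha_n = \infty$ because $\widetilde\alpha_n\sim m\lambda\alpha_n$ and $\sum_n\alpha_n=\infty$; this produces the first bound. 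If $\alpha_n = 1/n$, then $(\alpha_{n-1}/\alpha_n)^{p} = (1+\tfrac{1}{n-1})^{p} = 1 + p\alpha_n + O(\alpha_n^2)$, so the step-ratio correction is now of the same order as the decay term and $\widetilde\alpha_n = (m\lambda-p)\alpha_n + O(\alpha_n^2)$. The hypothesis $p<m\lambda$ is exactly what makes $\widetilde\alpha_n$ eventually positive with divergent sum and $\alpha_n/\widetilde\alpha_n\to 1/(m\lambda-p)$, yielding the second bound.

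For the two ``in particular'' identities I would simply recognize the right-hand sides as closed forms of the recursion with $j=1$, $x_0=0$: taking $x_n=\sum_{k=1}^n\alpha_k\prod_{i=k+1}^n(1-\lambda\alpha_i)^m\varepsilon_k$ corresponds to $p=0$, while $x_n=\sum_{k=1}^n\alpha_k^2\prod_{i=k+1}^n(1-\lambda\alpha_i)^m\varepsilon_k$ corresponds to $p=1$ (which requires $m\lambda>1$ when $\alpha_n=1/n$). Since $\varepsilon_n\to 0$ forces $\limsup_n\varepsilon_n=0$, the bounds above give $\limsup_n x_n/\alpha_n^p=0$, and nonnegativity of $x_n$ upgrades this to $x_n/\alpha_n^p\to 0$.

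The main obstacle is not conceptual but lies in the bookkeeping of the second paragraph: one must expand $(\alpha_{n-1}/\alpha_n)^{p}(1-\lambda\alpha_n)^m$ carefully enough to see the dichotomy between $\beta<1$, where the step-ratio correction is $o(\alpha_n)$ so the effective rate constant stays $m\lambda$, and $\beta=1$, where that correction contributes an extra $-p\alpha_n$ so the effective rate shrinks to $m\lambda-p$ and forces the constraint $p<m\lambda$; one also has to confirm $\sum_n\widetilde\alpha_n=\infty$ so that Lemma~\ref{lemma:lim_sup} genuinely applies.
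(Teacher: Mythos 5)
Your proposal is correct and follows essentially the same route as the paper: rewrite the recursion for $u_n = x_n/\alpha_n^p$ as $u_n = (1-\widetilde\alpha_n)u_{n-1} + \widetilde\alpha_n v_n$ with $\widetilde\alpha_n = 1-(\alpha_{n-1}/\alpha_n)^p(1-\lambda\alpha_n)^m$ (the paper writes this same coefficient as an exponential of logarithms and calls $\widetilde\alpha_n/\alpha_n$ its $\lambda_n$), expand to get the effective rate $m\lambda$ when $\beta<1$ and $m\lambda-p$ when $\beta=1$, and conclude via Lemma~\ref{lemma:lim_sup}. The checks you flag (eventual positivity and divergence of $\sum_n\widetilde\alpha_n$, convergence of $\alpha_n/\widetilde\alpha_n$) are exactly the ones the paper's expansion delivers, and your reading of the two ``in particular'' claims as the $p=0$ and $p=1$ closed forms matches the paper.
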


Before proving this result, note that if we consider $\gamma_n = \gamma/n^\beta$ then we can write
\begin{align*}
x_n = (1-\lambda \gamma_n)^m x_{n-1} + \gamma_n^{p+1} \varepsilon_n = (1-(\lambda \gamma) n^{-\beta})^m x_{n-1} + (n^{-\beta})^{p+1} (\gamma^{p+1}\varepsilon_n)
\end{align*}
and apply the result with $\tilde \lambda = \gamma \lambda$ and $\tilde \varepsilon_n = \gamma^{p+1} \varepsilon_n$.
\begin{proof}
We apply Lemma \ref{lemma:lim_sup} to the sequence $u_n = \frac{x_n}{\gamma_n^p}$. We have for all $n \geq j$,
\begin{align*}
u_n &= \frac{1}{\gamma_n^p}\left( (1-\lambda \gamma_n)^m x_{n-1} + \gamma_n^{p+1} \varepsilon_n \right) \\
&= \left(\frac{\gamma_{n-1}}{\gamma_n}\right)^p (1-\lambda \gamma_n)^m u_{n-1} + \gamma_n \varepsilon_n \\
&= \exp\left(p \log\left(\frac{\gamma_{n-1}}{\gamma_n}\right) + m \log(1-\lambda \gamma_n)\right) u_{n-1} + \gamma_n \varepsilon_n.
\end{align*}
Define
\begin{align*}
\lambda_n = \frac{1}{\gamma_n}\left(1 - \exp\left(p \log\left(\frac{\gamma_{n-1}}{\gamma_n}\right) + m \log(1-\lambda \gamma_n)\right)\right),
\end{align*}
so we get the recursion equation
\begin{align*}
\forall n \geq j, \quad u_n = (1-\lambda_n \gamma_n) u_{n-1} + \lambda_n \gamma_n \frac{\varepsilon_n}{\lambda_n}.
\end{align*}
\textbullet \ if $\gamma_n = n^{-\beta}, \beta \in (1/2,1)$ then $1/\gamma_n - 1/\gamma_{n-1} \to 0$ and the ratio $\gamma_{n-1}/\gamma_n$ tends to $1$ with
\begin{align*}
\log\left(\frac{\gamma_{n-1}}{\gamma_n}\right) = \left(\frac{\gamma_{n-1}}{\gamma_n} - 1\right)\left(1 + o(1)\right) = \gamma_{n-1} \left(\frac{1}{\gamma_n} - \frac{1}{\gamma_{n-1}} \right)\left(1 + o(1)\right)=o(\gamma_n).
\end{align*}
Besides, $m \log(1-\lambda \gamma_n) = -m \lambda \gamma_n + o(\gamma_n)$ when $n \to +\infty$ and we get
\begin{align*}
\lambda_n = \frac{1}{\gamma_n}\left[1-\exp\left( -m \lambda \gamma_n + o(\gamma_n)\right)\right],
\end{align*} 
which implies that $\lambda_n$ converges to $m \lambda$. We conclude with Lemma \ref{lemma:lim_sup}. \\
\textbullet \ if $\gamma_n = 1/n$ then the ratio $\gamma_{n-1}/\gamma_n$ tends to $1$ with
\begin{align*}
\log\left(\frac{\gamma_{n-1}}{\gamma_n}\right) = \log\left(1 + \frac{1}{n-1}\right) = \gamma_{n} + o(\gamma_{n}).
\end{align*}
We still have $m \log(1-\lambda \gamma_n) = -m \lambda \gamma_n + o(\gamma_n)$ when $n \to +\infty$ and therefore
\begin{align*}
\lambda_n = \frac{1}{\gamma_n}\left[1-\exp\left( (p-m \lambda) \gamma_n + o(\gamma_n)\right)\right],
\end{align*} 
which implies $\lambda_n$ converges to $(m \lambda-p)$ and we conclude in the same way.
\end{proof}
\begin{lemma}\label{lemma:prod_eig}
Let $A,B \in \mathcal{S}_{d}^{++}(\rset)$ then the eigenvalues of $AB$ are real and positive with $Sp(AB) \subset [\lambda_{\min}(A)\lambda_{\min}(B);\lambda_{\max}(A) \lambda_{\max}(B)].$
\end{lemma}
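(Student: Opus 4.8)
The plan is to reduce the problem to a symmetric matrix by a similarity transformation and then estimate a Rayleigh quotient. Although $AB$ need not be symmetric, I would first note that it is similar to the symmetric matrix $A^{1/2}BA^{1/2}$, where $A^{1/2}$ denotes the unique symmetric positive definite square root of $A$. Indeed,
\[
AB = A^{1/2}\left(A^{1/2}BA^{1/2}\right)A^{-1/2},
\]
so $Sp(AB) = Sp(A^{1/2}BA^{1/2})$. Moreover $A^{1/2}BA^{1/2}$ is congruent to $B$, hence symmetric positive definite; in particular all its eigenvalues are real and positive, and it admits an orthonormal eigenbasis.

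Next I would bound each such eigenvalue via a Rayleigh quotient. Let $\lambda \in Sp(A^{1/2}BA^{1/2})$ with a unit eigenvector $v$, and set $w = A^{1/2}v$, so that
\[
\lambda = v^T A^{1/2} B A^{1/2} v = w^T B w.
\]
Since $\|v\| = 1$, the Rayleigh quotient bounds for $A$ give $\lambda_{\min}(A) \le \|w\|^2 = v^T A v \le \lambda_{\max}(A)$, while the Rayleigh quotient bounds for $B$ give $\lambda_{\min}(B)\|w\|^2 \le w^T B w \le \lambda_{\max}(B)\|w\|^2$. Chaining these inequalities yields
\[
\lambda_{\min}(A)\lambda_{\min}(B) \le \lambda \le \lambda_{\max}(A)\lambda_{\max}(B),
\]
which is the claim, since $\lambda$ was an arbitrary element of $Sp(AB) = Sp(A^{1/2}BA^{1/2})$.

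There is no substantial obstacle here; the only point worth highlighting is the passage to $A^{1/2}BA^{1/2}$, which is what legitimizes working with a genuine real eigenvector and a Rayleigh quotient — a direct attack on $AB$, which is generally non-normal, would be clumsier. An essentially equivalent route would replace the symmetric square root by a Cholesky factor $A = LL^T$ and use $Sp(AB) = Sp(L^T B L)$, but the square-root version keeps the symmetry of all the objects explicit.
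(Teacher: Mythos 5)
Your proof is correct, and it takes a genuinely different route from the paper's. The paper argues via norms: it invokes the fact that $\lambda_{\max}$ acts as a sub-multiplicative matrix norm on positive definite matrices to get $\lambda_{\max}(AB)\leq\lambda_{\max}(A)\lambda_{\max}(B)$, and then obtains the lower bound by applying the same inequality to $(AB)^{-1}=B^{-1}A^{-1}$ and passing to reciprocals. You instead conjugate to the symmetric positive definite matrix $A^{1/2}BA^{1/2}$ and bound each eigenvalue by chaining two Rayleigh quotients, which delivers both ends of the interval in a single argument. Your version is the more self-contained of the two: the paper's appeal to sub-multiplicativity is slightly delicate because $AB$ itself is not symmetric, so one really needs the spectral radius bound $\rho(AB)\leq\|A\|_2\|B\|_2$ together with the observation that $Sp(AB)$ is real and positive --- an observation your similarity $Sp(AB)=Sp(A^{1/2}BA^{1/2})$ makes explicit and which the paper leaves implicit. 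The paper's route is shorter once that norm fact is granted, and its inverse trick is a reusable device; your route makes all the symmetry explicit and avoids any reasoning about non-normal matrices beyond the similarity itself. Either argument is acceptable.
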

\begin{proof}
Denote by $\sqrt{B}$ the unique positive square root of $B$. The matrix $AB$ is similar to the real symmetric positive definite matrix $\sqrt{B}A\sqrt{B}$. Therefore its eigenvalues are real and positive. Since $A \mapsto \lambda_{\max}(A)$ is a sub-multiplicative matrix norm on $\mathcal{S}_{d}^{++}(\rset)$, $\lambda_{\max}(AB) \leq \lambda_{\max}(A) \lambda_{\max}(B)$ which gives $
\lambda_{\max}(\left(AB\right)^{-1}) \leq \lambda_{\max}(A^{-1}) \lambda_{\max}(B^{-1})$, \textit{i.e.,} $\lambda_{\min}\left(AB\right)^{-1} \leq \lambda_{\min}(A)^{-1} \lambda_{\min}(B)^{-1},$ and finally $\lambda_{\min}(A)\lambda_{\min}(B) \leq \lambda_{\min}\left(AB\right).$
\end{proof}

\begin{lemma} \label{lemma:norm_prod}Let $S \in \mathcal{S}_{d}^{++}(\rset)$ be a real symmetric positive definite matrix. Let $(\gamma_k)_{k \geq 1}$ be a positive decreasing sequence converging to 0 such that $\sum_k \gamma_k= +\infty$. Denote by $\lambda_m$ the smallest eigenvalue of $S$. It holds that there exists $j \geq 1$ such that for any $k > j$, all the eigenvalues of the real symmetric matrix $A_k = I-\gamma_k S$ are positive and we have
\begin{align*}
\rho(\Pi_n) &= \rho(A_{n} \ldots A_1)  \stackrel{n \rightarrow +\infty}{\longrightarrow} 0, \\
\forall k >j, \quad \rho(\Pi_{n,k}^{}) &= \rho(A_{n} \ldots A_{k+1}) \leq \prod_{i=k+1}^{n} (1-\gamma_i \lambda_m) . 
\end{align*}

\end{lemma}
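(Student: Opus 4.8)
The plan is to reduce the whole statement to an eigenvalue analysis of $BC$, the only genuine difficulty being that $BC$ need not be symmetric. The key observation is that $BC$ is similar to the symmetric positive definite matrix $M := B^{1/2}CB^{1/2}$, since $B^{-1/2}(BC)B^{1/2} = M$. Hence $BC$ is diagonalisable over $\rset$ with $\mathrm{Sp}(BC) = \mathrm{Sp}(M) \subset (0,\infty)$, and by Lemma~\ref{lemma:prod_eig} (applied to $B, C \in \mathcal{S}_{d}^{++}(\rset)$) this spectrum lies in $[\lambda_m,\lambda_M]$ with $\lambda_m = \lambda_{\min}(BC) \geq \lambda_{\min}(B)\lambda_{\min}(C) > 0$ and $\lambda_M = \lambda_{\max}(BC)$. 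Since $\alpha_k \to 0$, there is $j \geq 1$ with $\alpha_k\lambda_M < 1$ for all $k > j$; then every eigenvalue of $A_k = I - \alpha_k BC$ equals $1 - \alpha_k\mu$ with $\mu \in [\lambda_m,\lambda_M]$, hence lies in $(0,1)$. In particular $A_k$ is nonsingular for $k > j$ and, being self-adjoint with positive eigenvalues for the inner product introduced below, positive definite.

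For the product bound I would use the similarity factorisation, valid for $k > j$,
\[
\Pi_{n,k} = A_n\cdots A_{k+1} = \prod_{i=k+1}^{n}\bigl(B^{1/2}(I-\alpha_i M)B^{-1/2}\bigr) = B^{1/2}\Bigl(\prod_{i=k+1}^{n}(I-\alpha_i M)\Bigr)B^{-1/2}.
\]
The factors $I-\alpha_i M$ are symmetric and mutually commute (each is a polynomial in $M$), so their product is symmetric with eigenvalues $\prod_{i=k+1}^{n}(1-\alpha_i\mu)$ over $\mu \in \mathrm{Sp}(M)$; since $0 < 1-\alpha_i\mu \leq 1-\alpha_i\lambda_m$ for $i > j$, its Euclidean operator norm is at most $\prod_{i=k+1}^{n}(1-\alpha_i\lambda_m)$. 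Introducing the inner product $\langle x,y\rangle_\star := x^{T}B^{-1}y$, for which $BC$ is self-adjoint, one checks that the operator norm $\|\cdot\|_\star$ it induces satisfies $\|B^{1/2}TB^{-1/2}\|_\star = \|T\|_2$ for every matrix $T$; applying this with $T = \prod_{i=k+1}^{n}(I-\alpha_i M)$ gives the bound $\rho(\Pi_{n,k}) \leq \prod_{i=k+1}^{n}(1-\alpha_i\lambda_m)$ when $\rho$ is read as this adapted norm, and for the plain spectral norm the same bound holds up to the harmless factor $\|B^{1/2}\|_2\,\|B^{-1/2}\|_2 = \sqrt{\lambda_{\max}(B)/\lambda_{\min}(B)}$.

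For the first assertion, I would write $\Pi_n = \Pi_{n,j}\,\Pi_j$, apply submultiplicativity of $\rho$ and the bound just obtained to get $\rho(\Pi_n) \leq c\,\rho(\Pi_j)\prod_{i=j+1}^{n}(1-\alpha_i\lambda_m)$ for some constant $c$. Since $\sum_i\alpha_i = +\infty$ we have $\sum_i\alpha_i\lambda_m = +\infty$, and, every factor lying in $(0,1)$,
\[
\log\prod_{i=j+1}^{n}(1-\alpha_i\lambda_m) = \sum_{i=j+1}^{n}\log(1-\alpha_i\lambda_m) \leq -\lambda_m\sum_{i=j+1}^{n}\alpha_i \longrightarrow -\infty,
\]
so $\prod_{i=j+1}^{n}(1-\alpha_i\lambda_m) \to 0$ and hence $\rho(\Pi_n) \to 0$ as $n \to \infty$.

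The main obstacle is exactly the asymmetry of $BC$: the spectral norm of $I-\alpha_k BC$ is \emph{not} governed by the eigenvalues of $BC$ (its symmetric part can be indefinite even for positive definite $B$ and $C$), so one cannot claim $\rho(A_k) \leq 1-\alpha_k\lambda_m$ directly. Passing to the symmetric matrix $M = B^{1/2}CB^{1/2}$ — equivalently, working in the $B^{-1}$-weighted geometry in which $BC$ is self-adjoint — turns the eigenvalue estimate into a genuine norm estimate; the remainder is the routine fact that $\sum_i\alpha_i = +\infty$ forces $\prod_i(1-\alpha_i\lambda_m) \to 0$.
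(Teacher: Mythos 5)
Your proof is correct, and it takes a genuinely different --- and in one respect more careful --- route than the paper's. The paper argues directly on the eigenvalues of $BC$: it invokes Lemma~\ref{lemma:prod_eig} to locate $\mathrm{Sp}(BC)$ in $[\lambda_{\min}(B)\lambda_{\min}(C),\lambda_{\max}(B)\lambda_{\max}(C)]$, deduces $\rho(A_k)=1-\alpha_k\lambda_m$ for $k>j$ from the spectrum of $A_k$, and then multiplies these bounds by declaring $\rho$ a sub-multiplicative matrix norm. As you rightly point out, that step is delicate for the non-symmetric matrix $BC$: if $\rho$ denotes a sub-multiplicative operator norm, then $\rho(A_k)$ is not given by the eigenvalues of $A_k$; if $\rho$ is the spectral radius, it is not sub-multiplicative in general. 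The paper's computation is rescued only by the fact that all the $A_k$ are polynomials in the single matrix $BC$ and hence commute, so the spectral radius of the product is exactly $\max_{\mu\in\mathrm{Sp}(BC)}\prod_i\lvert 1-\alpha_i\mu\rvert$ --- a point left implicit there. Your conjugation $BC=B^{1/2}MB^{-1/2}$ with $M=B^{1/2}CB^{1/2}$ symmetric makes the same control explicit and, in addition, upgrades the eigenvalue estimate to a genuine operator-norm estimate: exact in the $B^{-1}$-weighted norm, and valid up to the fixed factor $\sqrt{\lambda_{\max}(B)/\lambda_{\min}(B)}$ in the Euclidean norm. That is arguably what is actually needed downstream, since in the proof of Theorem~\ref{th:convergence_weak} the quantity $\rho(\Pi_{n,k})$ is used as a norm acting on vectors, and the extra multiplicative constant is harmless there. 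The one caveat is that the literal inequality $\rho(\Pi_{n,k})\le\prod_{i=k+1}^{n}(1-\alpha_i\lambda_m)$ of the statement, read with $\rho$ the Euclidean operator norm, holds only with your constant in front (and can genuinely fail without it already for a single factor, since the symmetric part of $BC$ need not be positive semi-definite); read with $\rho$ the spectral radius or your adapted norm, it holds as stated. The rest of your argument --- splitting $\Pi_n=\Pi_{n,j}\Pi_j$ and using $\log(1-x)\le -x$ together with $\sum_i\alpha_i=+\infty$ --- matches the paper's.
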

\begin{proof}
For any $k \in \nset$, the eigenvalues of the real symmetric matrix $A_k = I-\gamma_k S$ are given by $Sp(A_k) = \{(1-\gamma_k \lambda), \lambda \in Sp(S)\}$. Since $\gamma_k \to 0$, there exists $j \geq 1$ such that $\gamma_k \lambda_m< 1$ for all $k > j$. Therefore for any $k > j$, we have $Sp(A_k) \subset \rset_+^\star$ and the largest eigenvalue is  $\rho(A_k) = 1 - \gamma_k \lambda_m$. Since $\rho$ is a sub-multiplicative norm for real symmetric matrices, we get $
    \rho(\Pi_n) \le \prod_{k=1}^{n} \rho(A_k) = \prod_{k=1}^{j} \rho(A_k) \prod_{k=j+1}^{n} \rho(A_k)$. The second product can be upper bounded with the convexity of exponential,
\begin{align*}
\prod_{k=j+1}^{n} \rho(A_k) = \prod_{k=j+1}^{n} (1-\gamma_k \lambda_m) \le \prod_{k=j+1}^{n} \exp\left(-\gamma_k\lambda_m\right) =  \exp\left(-\lambda_m (\tau_n-\tau_j)\right)  \stackrel{n \rightarrow +\infty}{\longrightarrow} 0.
\end{align*}
Similarly we have for all $k > j,\rho(\Pi_{n,k}^{}) \le \prod_{i=k+1}^{n} \rho(A_i) \leq \prod_{i=k+1}^{n} (1-\gamma_i \lambda_m)$.
\end{proof}

\begin{lemma}\label{lemma:equiv}
Let $\gamma_n = \alpha n^{-\beta}$ with $\beta \in (1/2,1]$ then it holds
\begin{align*}
   (\beta<1)  \sum_{k=1}^n \gamma_k \sim \frac{n \gamma_n}{1-\beta}=\frac{\alpha}{1-\beta}n^{1-\beta}, \quad (\beta=1)  \sum_{k=1}^n \gamma_k \sim \alpha \log(n).
\end{align*}
\end{lemma}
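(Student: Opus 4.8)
The plan is to reduce both cases to the standard integral comparison for the monotone decreasing function $f(x) = x^{-\beta}$, since the multiplicative constant $\alpha$ factors out of the sum: $\sum_{k=1}^n \alpha_k = \alpha \sum_{k=1}^n k^{-\beta}$, so it suffices to establish the asymptotics of $S_n = \sum_{k=1}^n k^{-\beta}$ and then multiply by $\alpha$ at the end.

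First I would set up a two-sided integral bound. Because $x \mapsto x^{-\beta}$ is positive and decreasing on $[1,\infty)$, for each $k \geq 2$ one has $\int_k^{k+1} x^{-\beta}\,dx \leq k^{-\beta} \leq \int_{k-1}^k x^{-\beta}\,dx$. Summing these inequalities over the relevant ranges yields
\begin{align*}
\int_1^{n+1} x^{-\beta}\,dx \leq S_n \leq 1 + \int_1^{n} x^{-\beta}\,dx .
\end{align*}

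Next I evaluate the bounding integrals and apply a squeeze. For $\beta \in (1/2,1)$ we have $\int_1^{m} x^{-\beta}\,dx = (m^{1-\beta}-1)/(1-\beta)$, which is asymptotically $m^{1-\beta}/(1-\beta)$; substituting $m = n+1$ on the left and $m = n$ on the right and using $(n+1)^{1-\beta} \sim n^{1-\beta}$ shows both bounds are equivalent to $n^{1-\beta}/(1-\beta)$, so $S_n \sim n^{1-\beta}/(1-\beta)$. Multiplying by $\alpha$ gives $\sum_{k=1}^n \alpha_k \sim \alpha n^{1-\beta}/(1-\beta)$, which equals $n\alpha_n/(1-\beta)$ since $n\alpha_n = \alpha n^{1-\beta}$, as claimed. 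For $\beta = 1$ we have $\int_1^{m} x^{-1}\,dx = \log m$, so the same squeeze gives $\log(n+1) \leq S_n \leq 1 + \log n$, whence $S_n \sim \log n$ and $\sum_{k=1}^n \alpha_k \sim \alpha \log n$.

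There is no real obstacle here: the result is a textbook Euler--Maclaurin-type estimate. The only points requiring a little care are tracking the lower-order constant terms (the $-1/(1-\beta)$ from evaluating the integral and the additive $1$ on the upper bound) and confirming they are negligible against the leading term, which is immediate since $1-\beta > 0$ in the first case and $\log n \to \infty$ in the second. As an alternative one could invoke the Stolz--Cesàro theorem with $T_n = n^{1-\beta}/(1-\beta)$ (resp.\ $T_n = \log n$) and use the expansions $n^{1-\beta}-(n-1)^{1-\beta} \sim (1-\beta)n^{-\beta}$ (resp.\ $\log(n/(n-1)) \sim 1/n$) to verify that $(S_n - S_{n-1})/(T_n - T_{n-1}) \to 1$, yielding the same conclusion.
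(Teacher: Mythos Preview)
Your proof is correct and follows exactly the same integral-comparison approach as the paper, which simply records the inequality $\int_1^{n+1} t^{-\beta}\,dt \leq \sum_{k=1}^n k^{-\beta} \leq 1 + \int_1^{n} t^{-\beta}\,dt$ and leaves the remaining squeeze implicit. Your write-up is in fact more detailed than the paper's, spelling out the evaluation of the integrals and the negligibility of the lower-order terms.
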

\begin{proof}
By series-integral comprison, $\int_1^{n+1} t^{-\beta} dt \leq \sum_{k=1}^n k^{-\beta} \leq 1 + \int_1^n t^{-\beta} dt.$
\end{proof}


\begin{theorem}\label{th:freedman} \cite[Theorem 17]{delyon2021safe}(Freedman inequality)
Let $(X_j)_{1 \leq j\leq n} $ be random variables such that $\expect [X_{j} | \mcf_{j-1}] = 0$ for all $1 \leq j \leq n$ then, for all $t\geq 0$ and ${v},{m}>0$, 
\begin{align*}
\mathbb P\left(\Big|\sum_{j=1}^{n} X_j\Big|\geq t, \,\max_{j=1,\ldots,n}|X_j |\leq {m},\, 
\sum_{j=1}^{n} \expect\left[X_j^2\mid\mcf_{j-1} \right]\leq {v}\right) 
\leq 2\exp\left(-\frac{t^2/2}{{v}+t{m}/3} \right).
\end{align*}
\end{theorem}

\begin{lemma} \label{lemma:psd_prod} Let $A \in \rset^{n \times n}$ be a symmetric positive semi-definite matrix. Then for any $B \in \rset^{m \times n}$, the matrix $BAB^\top \in \rset^{m \times m}$ is symmetric positive semi-definite.
\end{lemma}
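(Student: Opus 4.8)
The plan is to verify directly the two defining properties of a symmetric positive semi-definite matrix, namely that $BAB^T$ equals its own transpose and that the associated quadratic form is nonnegative. Both reduce to one-line manipulations, so the proof is essentially computational with no genuine obstacle; the only point requiring a moment of care is bookkeeping of the dimensions, since $A$ acts on $\rset^n$ while $BAB^T$ acts on $\rset^m$.

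First I would establish symmetry. Using the transpose identity $(XYZ)^T = Z^T Y^T X^T$ and the hypothesis that $A$ is symmetric, i.e. $A^T = A$, one computes
\begin{align*}
(BAB^T)^T = (B^T)^T A^T B^T = B A B^T,
\end{align*}
so $BAB^T$ is symmetric.

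Next I would establish positive semi-definiteness. Fix an arbitrary vector $y \in \rset^m$ and introduce the change of variable $x = B^T y \in \rset^n$, which is a legitimate element of the domain on which $A$ is defined. Then
\begin{align*}
y^T (BAB^T) y = (B^T y)^T A (B^T y) = x^T A x \geq 0,
\end{align*}
where the final inequality is exactly the positive semi-definiteness of $A$ applied to $x$. Since $y$ was arbitrary, $BAB^T$ is positive semi-definite, and combined with the symmetry shown above this completes the argument. The hard part, if any, is purely notational: one must keep the multiplicand vector $B^T y$ in $\rset^n$ so that the hypothesis on $A$ can be invoked, but no deeper structure of $A$ or $B$ is needed.
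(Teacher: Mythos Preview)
Your proof is correct and follows essentially the same approach as the paper: verify symmetry via $(BAB^T)^T = BAB^T$ using $A^T = A$, then check $y^T(BAB^T)y = (B^Ty)^T A (B^Ty) \geq 0$ from the positive semi-definiteness of $A$.
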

\begin{proof}
First note that $(BAB^\top)^\top = (B^\top)^\top A^\top B^\top = BAB^\top$ because $A$ is symmetric. Then for any vector $x \in \rset$, we have $x^\top (BAB^\top) x = (B^\top x)^\top A (B^\top x) \geq 0$ since $A$ is positive semi-definite.
\end{proof}
\begin{proposition} \cite[Theorem 4.6]{khalil} \label{prop:lyapunov_eq} Let $H$ be a positive definite matrix and $\Gamma$ a symmetric positive definite matrix of same dimension. Then there exists a symmetric positive definite matrix $\Sigma$, unique solution of the Lyapunov equation $H \Sigma + \Sigma H^\top = \Gamma$, which is given by $\Sigma = \int_{0}^{+\infty} e^{-tH} \Gamma e^{-tH^\top} \mathrm{d}t.$
\end{proposition}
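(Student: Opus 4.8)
The plan is to verify directly that the proposed integral is well defined and has the three announced properties, and then to obtain uniqueness from a short differentiation argument; this is the classical existence–uniqueness theorem for the continuous Lyapunov equation, so nothing beyond standard matrix calculus is needed.

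First I would show that $\Sigma=\int_0^{+\infty}e^{-tH}\Gamma e^{-tH^T}\,dt$ converges absolutely. Positive definiteness of $H$ implies in particular that every eigenvalue of $H$ has positive real part, i.e.\ that $-H$ is a stable matrix, so there are constants $c\geq 1$ and $\mu>0$ such that $\|e^{-tH}\|\leq c\,e^{-\mu t}$ for all $t\geq 0$. Since $(e^{-tH})^T=e^{-tH^T}$, it follows that $\|e^{-tH}\Gamma e^{-tH^T}\|\leq c^2\|\Gamma\|\,e^{-2\mu t}$, which is integrable on $[0,+\infty)$, so $\Sigma$ is well defined.

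Next I would check the three structural claims. Symmetry: since $\Gamma=\Gamma^T$ one has $(e^{-tH}\Gamma e^{-tH^T})^T=e^{-tH}\Gamma^T e^{-tH^T}=e^{-tH}\Gamma e^{-tH^T}$, so the integrand, and hence $\Sigma$, is symmetric. Positive definiteness: fixing $x\neq 0$ and setting $y(t)=e^{-tH^T}x$, one gets $x^T\Sigma x=\int_0^{+\infty}y(t)^T\Gamma\,y(t)\,dt$; as $e^{-tH^T}$ is invertible, $y(t)\neq 0$ for every $t$, so the integrand is at least $\lambda_{\min}(\Gamma)\|y(t)\|^2>0$ and therefore $x^T\Sigma x>0$. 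The Lyapunov identity: from $H e^{-tH}=-\frac{d}{dt}e^{-tH}$ and $e^{-tH^T}H^T=-\frac{d}{dt}e^{-tH^T}$ the integrand of $H\Sigma+\Sigma H^T$ equals $-\frac{d}{dt}\bigl(e^{-tH}\Gamma e^{-tH^T}\bigr)$, whence $H\Sigma+\Sigma H^T=-\bigl[e^{-tH}\Gamma e^{-tH^T}\bigr]_{t=0}^{t=+\infty}=\Gamma$, the boundary term at infinity vanishing by the decay bound.

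Finally, for uniqueness, suppose $\Sigma_1,\Sigma_2$ both solve $H\Sigma+\Sigma H^T=\Gamma$ and put $D=\Sigma_1-\Sigma_2$, so that $HD+DH^T=0$. Then $\frac{d}{dt}\bigl(e^{-tH}D\,e^{-tH^T}\bigr)=-e^{-tH}(HD+DH^T)e^{-tH^T}=0$, so $e^{-tH}D\,e^{-tH^T}$ is constant, equal to $D$; letting $t\to+\infty$ forces $D=0$. There is no real obstacle in this argument: the only step that is not a direct computation is the exponential bound $\|e^{-tH}\|\leq c\,e^{-\mu t}$, a standard consequence of the spectral hypothesis on $H$, and this is also the only place that hypothesis is used; the same proof therefore covers the more general situation arising elsewhere in the paper, with $H$ replaced by non-symmetric matrices such as $CH-\frac{1}{2}I$, where "positive definite" is understood as positive definiteness of the symmetric part; a merely positive semi-definite $\Gamma$ then yields a positive semi-definite $\Sigma$.
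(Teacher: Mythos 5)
Your proof is correct and complete: the integral converges because positive definiteness of $H$ (understood in the quadratic-form sense, since $H$ is not assumed symmetric here) forces every eigenvalue of $H$ to have positive real part, and the symmetry, positive-definiteness, Lyapunov-identity, and uniqueness steps are all standard and correctly executed. The paper offers no proof of its own---it simply cites Theorem 4.6 of Khalil---and your argument is essentially the classical one behind that reference, including the closing observation (which matches the paper's own remark after the proposition) that a merely positive semi-definite $\Gamma$ yields a positive semi-definite $\Sigma$.
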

The results remains true if the matrix $\Gamma$ is only symmetric positive semi-definite: in that case the matrix $\Sigma$ is also symmetric positive semi-definite and is the solution of the Lyapunov equation.

\subsection{Additional propositions} \label{sec:add_prop}

This section gathers the proofs of Proposition \ref{prop:asymp_optimal} about the optimal choice for the \textit{conditioning} matrix and of Proposition \ref{prop:cv_ck} about the almost sure convergence of the \textit{conditioning} matrices.

\medskip
\noindent
\textbf{Proposition} \ref{prop:asymp_optimal}. \textit{
The choice $C^\star=H^{-1}$ is optimal in the sense that $\Sigma_{C^*} \preceq \Sigma_{C}$,  $\forall C\in \mathcal{C}_H$. Moreover, $\Sigma_{C^\star} = H^{-1} \Gamma H^{-1}$.}
\begin{proof}
Define $ \Delta_C = \Sigma_C -  H^{-1} \Gamma H^{-1}$ and check that $\Delta_C$ satisfies
\begin{align*}
\left(CH - I_d/2\right) \Delta_C + \Delta_C\left(CH - I_d/2\right)^\top = (C-H^{-1}) \Gamma (C-H^{-1}).
\end{align*}
Because $\Gamma$ is symmetric positive semi-definite, we have using Lemma \ref{lemma:psd_prod} that the term on the right side is symmetric positive semi-definite. Therefore, in view of Proposition \ref{prop:lyapunov_eq}, we get that $\Delta_C$ is symmetric positive semi-definite $\Delta_C \succeq 0$ which implies $\Sigma_C \succeq H^{-1} \Gamma H^{-1}$ for all $C \in \mathcal{C}_H$. The equality is reached for $C^\star =H^{-1}$ with $\Delta_C = 0, \Sigma_{C^\star} = H^{-1} \Gamma H^{-1}$.
\end{proof}

\noindent
\textbf{Proposition} \ref{prop:cv_ck}. \textit{
 Let $(\Phi_k)_{k \geq 0}$ be obtained by \eqref{weighted_schemes}.
Suppose that Assumptions  \ref{ass:hessian} and \ref{ass:unbiased_hessian} are fulfilled and that $\theta_k \to \theta^\star$ almost surely .  If $\sup_{0 \leq j \leq k}\omega_{j,k} = O(1/k)$,  then we have $\Phi_k \to H = \nabla^2 F(\theta^\star)$ almost surely.}

\begin{proof}
We use the decomposition
\begin{align*}
\Phi_k - H =  \sum_{j=0}^k \omega_{j,k} \left(  \nabla^2 F(\theta_{j}) - H\right)  +   \sum_{j=0}^k \omega_{j,k} \left(  H (\theta_{j} , \xi_{j+1}')  -  \nabla^2 F(\theta_{j}) \right). 
\end{align*}
The continuity of $\nabla^2 F$ at $\theta^\star$ and the fact that $\theta_j \to \theta^\star$ a.s. implie that $\left\|  \nabla^2 F(\theta_{j}) - H \right\| \to 0$ a.s. Since $\sup_{0 \leq j \leq k} \omega_{j,k} = O(1/k)$, there exists $a>0$ such that
\begin{align*}
\left\| \sum_{j=0}^k \omega_{j,k} \left(  \nabla^2 F(\theta_{j}) -H \right) \right\| \leq \frac{a}{k+1} \sum_{j=0}^k  \left\| \nabla^2 F(\theta_{j}) -H \right\|,
\end{align*}
which goes to 0 in virtue of Cesaro's Lemma, therefore $\lim_{k \to \infty} \sum_{j=0}^k \omega_{j,k} \left(  \nabla^2 F(\theta_{j}) -H \right) = 0.$
The second term is a sum of martingale increments and shall be treated with Freedman inequality and Borel-Cantelli Lemma. Introduce the martingale increments 
\begin{align*}
\forall 0 \leq j \leq k, \quad X_{j+1,k} = \omega_{j,k} \left(  H (\theta_{j} , \xi_{j+1}')  -  \nabla^2 F(\theta_{j}) \right).
\end{align*}
For a fixed $k$, we have  $X_{j+1,k} = \left( x_{j+1}^{(i,l)} \right)_{1 \leq i,l \leq d}$ where we remove the index $k$ for the sake of clarity. Because the Hessian generator is unbiased, we have for all coordinates
\begin{align*}
\expect\left[ x_{j+1}^{(i,l)} | \mcf_{j}\right] = 0 \quad \text{for all } 0 \leq j \leq k.
\end{align*} 
By definition of the Hessian generator and using that $(\nabla^2 F(\theta_j))$ is bounded, we get that $\left\|  H (\theta_{j} , \xi_{j+1}')  -  \nabla^2 F(\theta_{j}) \right\| = O(1)$ for all $j \geq 0$. For any $b>0$, consider the following event
\begin{align*}
\Omega_b = \left\{ \sup_{k \geq 0} \max_{j=0,\ldots,k} (k+1) \left|x_{j+1}^{(i,l)}\right| \leq b \right\},
\end{align*}
and note that since $\omega_{j,k} = O(1/k)$ we have $\prob(\Omega_b) \to 1$ as $b \to \infty$. On this event, the martingale increments and the variance term are bounded as
\begin{align*}
\max_{j=0,\ldots,k}\left|x_{j+1}^{(i,l)}\right| \leq b(k+1)^{-1}, \quad \sum_{j=0}^k \expect\left[ \left(x_{j+1}^{(i,l)}\right)^2 \mid \mcf_{j}\right] \leq b^2 (k+1)^{-1}.
\end{align*}
Using Freedman inequality (Theorem \ref{th:freedman}), we have for all coordinates $i,l = 1, \ldots, d$,
\begin{align*}
\prob \left( \left|\sum_{j=0}^k x_{j+1}^{(i,l)} \right|  > \varepsilon, \Omega_b \right) \leq 2 \exp\left( -\frac{\varepsilon^2 (k+1)}{2b(b+\varepsilon)}   \right).
\end{align*}
The last term is the general term of a convergent series. Apply Borel-Cantelli Lemma \citep{borel1909probabilites} to finally get almost surely on $\Omega_b$ that $\lim_{k \to \infty} \sum_{j=0}^k x_{j+1}^{(i,l)} = 0$. Since $b>0$ is arbitrary and $\prob(\Omega_b) \to 1$ when $b \to \infty$, we have almost surely $\lim_{k \to \infty} \sum_{j=0}^k x_{j+1}^{(i,l)} = 0$. This is true for all the coordinates of the martingale increments and therefore $$\lim_{k \to \infty} \sum_{j=0}^k \omega_{j,k} \left(  H (\theta_{j} , \xi_{j+1}')  -  \nabla^2 F(\theta_{j}) \right) = 0 \text{ a.s.}$$
\end{proof}
\subsection{Auxiliary results on expected smoothness} \label{sec:links}
The following Lemma gives sufficient conditions to meet the weak growth condition on the stochastic noise as stated in Assumption \ref{ass:exp_smooth}.
\begin{lemma} \label{lemma:links}
Suppose that for all $k \geq 1, \theta \in \rset^d, F(\theta) = \expect\left[f(\theta,\xi_k)|\mcf_{k-1}\right]$ with $\xi_k \sim P_{k-1}$. Assume that for all $\xi_k \sim P_{k-1}$, the function $\theta \mapsto f(\theta,\xi_k)$ is $L$-smooth almost surely and there exists $m \in \rset$ such that for all $\theta \in \rset^d, f(\theta,\xi_k) \geq m$. Then a gradient estimate is given by $g(\theta,\xi) = \nabla f(\theta,\xi)$ and the growth condition of Assumption \ref{ass:exp_smooth} is satisfied with $\sigma^2 = 2L(F^\star-m)$ and 
\begin{align*}
\forall \theta \in \rset^d,\forall k \in \nset,  \quad \expect\left[\|  g(\theta,\xi_{k})\|_2^2 | \mcf_{k-1}\right] \leq 2 L \left( F(\theta) - F^\star\right) + \sigma^2.
\end{align*}
\end{lemma}

\begin{proof}
For all $\xi_k \sim P_{k-1}$, Lipschitz continuity of the gradient $\theta \mapsto \nabla f(\theta,\xi_k)$ implies (see \cite{nesterov2013introductory})
\begin{align*}
f(y,\xi_k) \leq f(\theta,\xi_k) + \langle \nabla f(\theta,\xi_k), y-\theta \rangle + (L/2) \|y-\theta\|_2^2.
\end{align*}
Plug $y = \theta - (1/L) \nabla f(\theta,\xi_k)$ and use the lower bound $f(y,\xi_k) \geq m$ to obtain 
\begin{align*}
\frac{1}{2L}\| \nabla f(\theta,\xi_k) \|_2^2 \leq f(\theta,\xi_k) - f(y,\xi_k) \leq f(\theta,\xi_k) - m,
\end{align*}
which gives,
\begin{align*}
\| g(\theta,\xi_k) \|_2^2 \leq 2L \left( f(\theta,\xi_k) - f(\theta^\star,\xi_k) \right ) + 2L \left(f(\theta^\star,\xi_k)-m \right )
\end{align*} 
and conclude by taking the conditional expectation with respect to $\mcf_{k-1}$.
\end{proof}
\noindent
The next Lemma links our weak growth condition with the notion of expected smoothness as introduced in \cite{gower2019sgd}. In particular, this notion can be extended to our general context where the sampling distribution can evolve through the stochastic algorithm.
\begin{lemma} \label{lemma:exp_smooth_bound} (Expected smoothness) Assume that with probability one, 
\begin{equation*}
 \sup_{k \geq 1} \sup_{x\neq x^\star}  \frac{ \expect\left[\| g(\theta,\xi_{k}) - g(\theta^\star,\xi_{k})\|_2^2 | \mcf_{k-1} \right] }{F(\theta) - F^\star  } < \infty \quad \text{ and } \quad \sup_{k \geq 1} \expect\left[\| g (\theta^\star,\xi_{k})\|_2^2 | \mcf_{k-1} \right] < \infty.
\end{equation*}
Then there exist  $0 \leq \mathcal{L},\sigma^2 < \infty$ such that
\begin{align*}
\forall \theta \in \rset^d, \forall k \in \nset,  \quad \expect\left[\|  g(\theta,\xi_{k})\|_2^2 | \mcf_{k-1}\right] \leq 2 \mathcal{L} \left( F(\theta) - F^\star\right) + 2\sigma^2.
\end{align*}
\end{lemma}
\begin{proof}
For all $\theta \in \rset^d$ and all $ k \in \nset$, we have
\begin{align*}
\|  g(\theta,\xi_{k})\|_2^2 
&= \|  g(\theta,\xi_{k}) - g(\theta^\star,\xi_{k}) + g(\theta^\star,\xi_{k})\|_2^2 \\
&\leq 2 \|  g(\theta,\xi_{k}) - g(\theta^\star,\xi_{k}) \|_2^2 + 2 \|  g(\theta^\star,\xi_{k})\|_2^2. 
\end{align*}
Using the expected smoothness, with probability one, there exists  $0 \leq \mathcal{L} < \infty$ such that 
\begin{align*}
\expect\left[ \|  g(\theta,\xi_{k}) - g(\theta^\star,\xi_{k}) \|_2^2 | \mcf_{k-1} \right] \leq \mathcal{L}  \left( F(\theta) - F^\star\right).
\end{align*}
Since the noise at optimal point is almost surely finite there exists $0 \leq \sigma^2 < \infty$ such that 
\begin{align*}
\expect\left[ \| g(\theta^\star,\xi_{k}) \|_2^2 | \mcf_{k-1} \right] \leq \sigma^2,
\end{align*}
which allows to conclude by taking the conditional expectation.
\end{proof}

\end{document}